\newtheorem{assumption}{Assumption}
\newtheorem{proposition}{Proposition}
\newtheorem{lemma}{Lemma}
\newenvironment{assumptionp}[1]{
  
  \assumptionalt
}{\endassumptionalt}
\newcommand{\ve}{\varepsilon}
\DeclareMathOperator*{\argmax}{\arg\!\max}
\begin{document}

{
\singlespacing
\title{\textbf{A Shrinkage Likelihood Ratio Test for High-Dimensional Subgroup Analysis with a Logistic-Normal Mixture Model}
\author{Shota Takeishi\footnote{I would like to thank Katsumi Shimotsu for helpful suggestions and sharing his computational resources with me,
as well as Xuming He and Junichiro Yoshida for pointing me to the relevant literature. I also appreciate comments from participants of the weekly Bayesian seminar at the Department of Statistics, the Graduate School of Economics, the University of Tokyo.
This research was supported by JSPS KAKENHI Grant Number JP22J12024 and in part through computational resources and services provided by Advanced Research Computing at the University of Michigan, Ann Arbor.}\\
Risk Analysis Research Center \\
The Institute of Statistical Mathematics \\
shotakeishi2@gmail.com
}}
\maketitle
}

\begin{abstract} 
  In subgroup analysis, testing the existence of a subgroup with a differential treatment effect serves as protection against spurious subgroup discovery.
  Despite its importance, this hypothesis testing possesses a complicated nature: parameter characterizing subgroup classification is not identified under the null hypothesis of no subgroup. 
  Due to this irregularity, the existing methods have the following two limitations.
  First, the asymptotic null distribution of test statistics often takes an intractable form, which necessitates computationally demanding resampling methods to calculate the critical value. 
  Second, the dimension of personal attributes characterizing subgroup membership is not allowed to be of high dimension.
  To solve these two problems simultaneously, this study develops a shrinkage likelihood ratio test for the existence of a subgroup using a logistic-normal mixture model.
  The proposed test statistics are built on a modified likelihood function that shrinks possibly high-dimensional unidentified parameters toward zero under the null hypothesis while retaining power under the alternative.
  This shrinkage helps handle the irregularity and restore the simple chi-square-type asymptotics even under the high-dimensional regime.
\end{abstract}

\section{Introduction}
Subgroup analysis is routinely conducted in clinical trials that aim to account for patients' heterogeneous responses to treatment \citep{wang2007statistics}.
By exploring the interaction between treatment effect and patients' characteristics, 
subgroup analysis searches for a subgroup with certain attributes who have a more beneficial or adverse treatment effect compared to the rest of the population.
Despite its widespread usage, one possible concern is that the treatment effect is actually homogeneous so that the detected subgroup is spurious. 

To prevent such false subgroup discovery, this study proposes a new testing method for the existence of a subgroup that is computationally efficient and scales with high-dimensional patients' characteristics.
Following  \cite{shen2015inference} and \cite{shen2017penalized}, our hypothesis testing is based on a logistic-normal mixture model.
This is a type of normal mixture regression model in which the means for different Gaussian components express distinctive treatment-outcome relationships and the mixing proportion varies as a logistic function of covariates.
These covariates and their associated parameters pertain to subgroup classification and are thus called classification covariates and classification parameters, respectively.
With this model, hypothesis testing for the existence of a subgroup reduces to testing the number of components.
Specifically, the null hypothesis of one component suggests that no subgroup characterized by the classification covariates exists, and the alternative hypothesis of two components indicates otherwise.

Numerous model-based methods have been developed for testing the existence of a subgroup with a differential treatment effect.
Within the framework of the logistic-normal mixture model, a pioneering work by \cite{shen2015inference} considers an EM test for subgroup existence while \cite{shen2017penalized} extend their approach to unequal variance cases.
For survival data, \cite{wu2016subgroup} consider the logistic-Cox mixture model and develop an EM test for the existence of a subgroup.
In contrast to those mixture-based modelings, \cite{fan2017change} deal with a structurally similar change-plane model where two regression functions switch according to single-index thresholding characterized by covariates and parameters.
They then propose a score-type test for the existence of a subgroup.
This change-plane-based approach has been adapted to many other contexts: \cite{kang2017subgroup} for survival data and \cite{huang2021threshold} for binary response data.

Hypothesis testing for the existence of a subgroup possesses an irregular structure: the classification parameter is not identified under the null hypothesis of no subgroup.
Due to the presence of unidentified parameters, the aforementioned works have the following two limitations.
First, the asymptotic null distribution of test statistics takes complicated forms.
In fact, the asymptotic null distribution derived in \cite{fan2017change} is a functional of stochastic processes indexed by unidentified parameters.
The intractability of the limiting distribution necessitates computationally demanding resampling methods to calculate the critical value.
The dependence of the null distribution on the unidentified parameters further gives rise to the second limitation.
Namely, the asymptotic null distribution of test statistics is even not well defined when the dimension of classification covariates, and accordingly, the classification parameter increases with sample size.
Considering the growing availability of high-dimensional personal characteristics such as biomarkers and genetic information, this restriction can be a hurdle to the practical use of testing for the existence of a subgroup.

To address these challenges, this study develops a new testing procedure.
Our test statistics are based on the likelihood ratio as in \cite{shen2015inference} and \cite{shen2017penalized}.
However, instead of naively estimating the model parameter, we propose to estimate the parameter under the alternative model with a modified likelihood function that penalizes $L_1$-norm of the classification parameter.
When the null hypothesis is true, this penalization strongly shrinks the unidentified classification parameters toward zero.
Owing to this shrinkage, the asymptotic null distribution of the resulting shrinkage likelihood ratio test statistics ($SLRT$) 
follows the half chi-square distribution, whose quantile is easy to calculate. This asymptotic result holds even when the dimension of the classification covaraites and parameters
increases with sample size under some rate conditions. 
Meanwhile, our simulation study reveals that the proposed test is more powerful than the test that fixes the classification parameter to zero in advance; hence the shrinkage effect of the proposed test seems not as strong under the alternative hypothesis.

The penalization approach for the sake of the tractable asymptotic null distribution dates back to a seminal work by \cite{chen2001modified}, which employs penalized likelihood for testing the number of components in finite mixture models with covariate-independent mixing proportions.
This approach is further explored, for example, in \cite{li2009non} and \cite{chen2009hypothesis} for EM tests and \cite{kasahara2014modified} for Markov regime-switching models.
Although those predecessors and the present study share the same spirit of using penalization, the penalized parameters are substantially different; the former penalize 
a one-dimensional mixing proportion while the latter penalizes possibly high-dimensional parameters that can be associated with covariates.
For more general setting beyond the context of finite mixture models, \cite{fukumizu2004} and \cite{yoshida2024penalized} propose to penalize unidentified parameter for the simplified asymptotic distribution.
These works, however, do not cover the case where the unidentified parameters are high-dimensional.
Dealing with the high dimensionality requires a distinct proof strategy to establish the asymptotic result.

We note that \cite{wang2016logistic} develops a information-criterion-based method for selecting the number of components in the logistic-mixture model with high-dimensional covariates.
As \cite{chen2012inference} point out, however, such a model selection procedure and hypothesis testing often serve different purposes. 
The former is expected to find the simplest model consistent with the observed data, while the latter is useful to check the validity of scientific propositions, for example, the (non)existence of a subgroup in our context.

The rest of the paper is organized as follows. Section 2 introduces the formal model setup, the proposed testing methodology, and the notation.
Section 3 then investigates the theoretical properties of the proposed test statistics after providing the required assumptions. 
In particular, we establish the asymptotic distribution of the test statistics under the null hypothesis of no subgroup.
Section 4 illustrates the finite sample performance of the proposed method through Monte Carlo simulations. We also discuss the choice of a tuning parameter, and compare the proposed method to an EM test by \cite{shen2015inference}.
Subsequently, in Section 5, we analyze real-world data with the proposed test. 
Section 6 concludes the article. 
All the proofs of the propositions in the main text are relegated to Appendix A, while Appendix B collects the auxiliary results and their proofs.

\section{Methodology}
Let $\{(Y_i, X_i, D_i, Z_i, \varepsilon_i, \delta_i) \}_{i = 1}^n$ be $i.i.d.$ copies of sample size $n$ defined on some underlying probability space $(\Omega, \mathcal F, \mathbb P)$ with the following logistic-normal mixture model:
\begin{align}\label{model} 
  Y_i &= X_i'\alpha + D_i (\beta + \delta_i \lambda) + \ve_i, \notag \\
  \mathbb P(\delta_i = 1 | X_i, Z_i) &= \exp (Z_i'\gamma) / (1 + \exp (Z_i'\gamma)), \notag \\
  \mathbb P(\delta_i = 0 | X_i, Z_i) &=  1 - \mathbb P(\delta_i = 1 | X_i, Z_i), 
\end{align}
where $Y_i \in \mathbb R$ is the outcome of interest, $D_i \in \mathbb R$ is a treatment variable, $X_i \in \mathbb R^{q}$ is other confounding variables, and $\varepsilon_i$ is an independent error term that follows the normal distribution with mean zero and variance $\sigma^2$.
Furthermore, $\delta_i \in \{ 0, 1 \}$ is a latent subgroup membership indicator, and $Z_i \in \mathbb R^{d_n}$ is possibly high-dimensional classification covariates that may be predictive of subgroup membership.
Among the unknown parameters $(\alpha, \beta, \lambda, \gamma, \sigma^2)$, $\beta$ expresses treatment effect common to the entire population, $\lambda$ is an additional treatment effect specific to a subgroup and 
$\gamma$ is the classification parameter that governs how $Z$ influences the subgroup classification. Based on this model, we perform the following hypothesis test based on the observable $\{ (Y_i, X_i, D_i, Z_i)\}_{i = 1}^n$:
\begin{equation}\label{test}
  H_0: \lambda = 0 \ \text{against} \ H_a: \lambda > 0,
\end{equation}
where the positivity of $\lambda$ under the alternative hypothesis reflects 
an identifiability issue in the logistic-normal mixture model \citep{jiang1999identifiability}. In this formulation, the null hypothesis indicates that there exists no subgroup characterized by $Z$.

As in \cite{shen2015inference}, our likelihood ratio-based test starts with the following conditional density function of $Y_i$ given $W_i := (X_i', D_i, Z_i')'$:
\begin{equation}\label{density}
  f(Y_i|W_i; \theta, \gamma) := \pi (Z_i'\gamma) \phi_{\sigma} (Y_i - X_i'\alpha - D_i(\beta + \lambda)) + (1 - \pi(Z_i'\gamma)) \phi_{\sigma} (Y_i - X_i'\alpha - D_i\beta),
\end{equation}
where $\theta$ collects the parameter $(\alpha, \beta, \lambda, \sigma^2)$ except for $\gamma$, $\pi(x) := \exp(x) / (1 + \exp(x))$ and $\phi_{\sigma}$ is a density function of 
the normal distribution with mean 0 and variance $\sigma^2$.
Letting $l_n (\theta, \gamma) := \sum_{i = 1}^n \log f(Y_i|W_i; \theta, \gamma)$,
the likelihood ratio test statistics take the form $2 (l_n (\hat \theta, \hat\gamma) - l_n (\hat \theta_{0}))$,
where $(\hat \theta, \hat \gamma)$ is the MLE under the full logistic-normal mixture model while $\hat \theta_0$ denote the MLE under the null model with the restriction $\lambda = 0$ (so that $\gamma$ is dropped for brevity).

The standard chi-square-type limit theory is expected to break down for the likelihood ratio test in the logistic-normal mixture model.
To illustrate this point intuitively, the non-identifiability of $\gamma$ keeps $\hat \gamma$ from having any clear probability limit under the null hypothesis, $\lambda = 0$.
This non-limit property implies that $\hat \gamma$ freely moves across the whole parameter space even asymptotically, which should translate into the complex asymptotic null distribution
characterized by the parameter space of $\gamma$ as in \cite{fan2017change}.
Although this argument does not directly apply to an EM test of \cite{shen2015inference}, their test is also not free from the intractable asymptotic null distribution.
Indeed, Remark 1 of \cite{shen2015inference} suggests that, when the number of the initial values for the test is more than one, the asymptotic null distribution of the EM test is the maximum of several correlated chi-square distributions, whose quantile is not easy to calculate.
Furthermore, the adaptability of their method to high-dimensional $Z$ has not been known.

For a simple chi-square-type asymptotic distribution and high-dimensional adaptability, our proposal aims to fix the aforementioned non-limit issue of $\hat \gamma$.
To introduce our idea, we define a penalized log-likelihood function:
\begin{equation}\label{pll}
  l^{\ast}_n (\theta, \gamma) := \sum_{i = 1}^n \log f(Y_i | W_i; \theta, \gamma) - p_n \| \gamma \|_1,
\end{equation}
where $\| \gamma \|_1 := \sum_{j = 1}^d |\gamma_j|$ is the $L_1$-norm and $p_n$ is a tuning parameter such that $p_n / n$ goes to zero as $n \rightarrow \infty$.
We then use the penalized estimator $(\hat \theta^{\ast}, \hat \gamma^{\ast}) := \argmax_{\theta \in \Theta, \gamma \in \Gamma} l^{\ast}_n (\theta, \gamma)$ in place of $(\hat \theta, \hat \gamma)$ for likelihood ratio test statistics where $\Theta$ and $\Gamma$ are parameter spaces for $\theta$ and $\gamma$, respectively. 
Hence, our proposed shrinkage likelihood ratio test statistics ($SLRT$) are defined as 
\begin{equation} 
  SLRT := 2(l_n (\hat \theta^{\ast}, \hat \gamma^{\ast}) - l_n (\hat \theta_0)).
\end{equation}

The idea of penalizing $\gamma$ is inspired by the following insight. Under the null hypothesis, $\lambda = 0$, an estimate of $\lambda$ is expected to be close to zero asymptotically.
In this case, variation of $\gamma$ has little effect on $l_n (\theta, \gamma)$.
Then, the effect of $\gamma$ on $l_n^{\ast} (\theta, \gamma)$ is more through $- p_n \| \gamma \|_1$ than through $l_n (\theta, \gamma)$, which strongly shrinks $\gamma$ to zero.
This shrinkage of $\gamma$ toward zero solves the non-limit problem of $\gamma$.
In contrast, under the alternative $\lambda > 0$, an estimate of $\lambda$ should be bounded away from zero when the sample size is large.
The effect of the variation of $\gamma$ on $l_n (\theta, \gamma)$ is nontrivial this time. Combining this with the fact that $p_n / n$ is set to be asymptotically negligible,
the effect of $\gamma$ on $l^{\ast} (\theta, \gamma)$ is more through $l_n (\theta, \gamma)$ so that the shrinkage effect of $\gamma$ to zero is not as strong.
This avoids substantial power loss of the test.
Our choice of $L_1$-norm as a penalty term might appear arbitrary, but our proof of the asymptotic results depends crucially on the form of the $L_1$-norm, as discussed in the next section.

In the remainder of the paper, we use the following notation. Collect the covariate as $U_i = (X_i', D_i)'$. 
We suppress the dependence of $d_n$ and $p_n$ on $n$ and just write $d$ and $p$.
Let $:=$ denote ``equals by definition.''
For $a \in \mathbb R^k$, $\| a \|_r$ denotes the $L_r$-norm of $a$ in Euclidean space. 
In particular, we suppress $r$ and just write $\| a \|$ when referring to the $L_2$-norm.
For $a := (a_1, \dots, a_k)' \in \mathbb R^{k}$ and a real-valued function $g(a)$, let $\nabla_{a} g(a^{\ast}) := (\partial g(a^{\ast}) / \partial a_1, \dots, \partial g(a^{\ast}) / \partial a_2)'$ be a vector of derivative evaluated at $a = a^{\ast}$.
The subscript $0$ as in $\theta_0$ signifies the true parameter value. 
For two real numbers $a$ and $b$, $a \land b$ and $a \lor b$ denote $\min(a, b)$ and $\max(a, b)$, respectively.
Let $\mathcal C$ be a universal finite positive constant whose value may change from one expression to another.
For two real sequences $\{ a_n \}_{n \in \mathbb N}$ and $\{ b_n \}_{n \in \mathbb N}$, the notation $a_n \lesssim b_n$ means that there exists a finite constant $\mathcal D$ independent of $n$ such that $a_n \leq \mathcal D b_n$ for all $n \in \mathbb N$.
All the limits below are taken as $n \rightarrow \infty$ unless stated otherwise.
Throughout the article, we assume $n \land d \geq 2$.

\section{Theory}
The goal of this section is to establish the asymptotic null distribution of $SLRT$, which is crucial for the implementation of the test.
After setting the required assumptions, we first show the consistency of $\hat \theta^*$ and the convergence rate of $\| \hat \gamma^* \|_1$ to zero (Proposition \ref{consistency}).
We then introduce the reparameterization of $\theta$ to deal with the non-regularity inherent in the logistic-normal mixture model.
Built on the reparameterization, we establish the quadratic approximation for the penalized log-likelihood function and derive the convergence rate of the reparameterized estimator and the asymptotic null distribution of $SLRT$.
In the following, let $\Theta := \Theta^{\alpha} \times \Theta^{\beta} \times \Theta^{\lambda} \times \Theta^{\sigma^2}$ be the parameter space for $\theta = (\alpha', \beta, \lambda, \sigma^2)'$,
and $Z := (Z_{(1)}, \cdots, Z_{(d)})'$. Throughout the section, we assume that the null hypothesis holds: $\lambda_0 = 0$.
\subsection{Assumptions}
In addition to the basic model setup, the following set of assumptions is required for the subsequent theoretical results.
\begin{assumption}\label{parameter}
  (a) $\Theta^{\alpha}$ and $\Theta^{\beta}$ are compact, convex sets,
  (b) $\Theta^{\sigma^2} = (0, \infty)$, (c) $\Theta^{\lambda} = [0, u_{\lambda}]$ for some $0 < u_{\lambda} < \infty$, (d) $\Gamma = \mathbb R^d$ and (e) $(\alpha_0', \beta_0)'$ lies in an interior of $\Theta^{\alpha} \times \Theta^{\beta}$.
\end{assumption}

\begin{assumption} \label{covariate}
  (a) $\mathbb E [\| U \|^{10}] < \infty$,
  (b) $\mathbb E[U U']$ is positive definite,
  (c) $Z$ is uniformly sub-Gaussian: there exist finite $K, C > 0$, independent of $n$ and $j$, such that $\mathbb P (|Z_{(j)}| > t) \leq K e^{-C t^2}$ for all $0 < t < \infty$ and $1 \leq j \leq d$, and
  (d) $D$ is bounded and nondegenerate, i.e., $Var(D) > 0$.
\end{assumption}

\begin{assumption}\label{tuning parameter}
  $n$, $d$ and $p$ satisfy the following rate condition: $(a)$ $n^{7/4} \sqrt{\log n} \log d / p^2 = o(1)$ and $(b)$ $\log d = o(n^{1/4})$.
\end{assumption}
Compactness in Assumption \ref{parameter}$(a)$ and $(c)$ is required for the proof of consistency (Proposition \ref{consistency}), which extends that of Lemma A1 of \cite{andrews1993tests} to our context.
As noted by Assumption \ref{parameter}$(b)$ and $(d)$, the parameter spaces for $\sigma^2$ and $\gamma$ are unrestricted.
Still, Lemma \ref{compact} suggests that those parameter spaces can essentially be regarded as compact with probability approaching one.
Assumption \ref{covariate}$(a)$ is set because we expand the log-likelihood function five times and the tenth-order terms of $U$ appear when establishing the quadratic approximation for the penalized log-likelihood function (Proposition \ref{quad approx}).
Similar higher-order moment conditions are employed when higher-order expansion of the log-likelihood function is necessary, as in Assumption 2$(a)$ of \cite{kasahara2015testing}.
Positive definiteness of $\mathbb E[U U']$ in Assumption \ref{covariate}$(b)$ is standard in hypothesis testing for finite mixture models and can be seen in Theorem 2 of \cite{shen2015inference} and Assumption 2$(b)$ of \cite{kasahara2015testing}.
We, however, do not require positive definiteness of $\mathbb E[Z Z']$, which is a major departure from \cite{shen2015inference} (see Theorem 2 therein).
Sub-Gaussianity in Assumption \ref{covariate}$(c)$ is for the sake of repeated use of Lemma 2.2.1 and 2.2.2 of \cite{van1996weak} to deal with the high-dimensionality of $Z$.
For Assumption \ref{covariate}$(d)$, the boundedness is a technical requirement for the proof of Lemma \ref{shrinkage pi} while the nondegeneracy avoids the complication associated with the quadratic approximation, as discussed in the paragraph following Proposition \ref{quad approx}.
This assumption should be satisfied in most of our intended applications where $D$ denotes a treatment variable in clinical trials.
Note that \cite{fan2017change} also consider bounded and nondegenerate $D$ in their setting for subgroup analysis.
Assumption \ref{tuning parameter}$(b)$ indicates that $d$ can grow much faster than $n$.
However, given Assumption \ref{tuning parameter}$(a)$, larger $d$ requires larger $p$, which leads to stronger shrinkage and thus power loss of the test.
We also note that, in contrast to $d$ for the classification covariates, the dimension of $X_i$ in the linear regression is required to be fixed. This restriction might appear unnatural. However, high-dimensionality of the identified parameter $\alpha$ in the non-convex likelihood function would pose yet another intractable challenge as documented in, for example, \cite{stadler2010test}, and, thus, is beyond the scope of the present study.

\subsection{Asymptotic Null Distribution of $SLRT$}
Based on the assumptions, we start by showing the consistency of $\hat \theta^*$ and the convergence rate of $\| \hat \gamma^* \|_{1}$ to zero.

\begin{proposition} \label{consistency}
  Assume Assumptions \ref{parameter}-\ref{tuning parameter} hold.
  Then $\hat \theta^* \rightarrow_p \theta_0$ and $\| \hat \gamma^* \|_1 = o_p (n^{-1/4} (\log d \log n)^{-1/2})$.
\end{proposition}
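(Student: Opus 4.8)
The plan is to establish consistency via a standard argmax/M-estimation route, tailored to handle the high-dimensional unidentified $\gamma$ through the $L_1$-penalty. First I would note that by definition of the penalized estimator, $l_n^*(\hat\theta^*, \hat\gamma^*) \geq l_n^*(\theta_0, 0) = l_n(\theta_0, 0)$ since $\|0\|_1 = 0$. Rearranging gives
\begin{equation}
  p_n \| \hat\gamma^* \|_1 \leq l_n(\hat\theta^*, \hat\gamma^*) - l_n(\theta_0, 0).
\end{equation}
The right-hand side is a log-likelihood-ratio-type object. The key observation is that under $H_0$ ($\lambda_0 = 0$), the true density is $f(Y|W;\theta_0, \gamma)$ for \emph{any} $\gamma$, so $l_n(\theta_0, \gamma) - l_n(\theta_0, 0)$ has mean zero for every fixed $\gamma$; more importantly, the map $\gamma \mapsto l_n(\theta_0, \gamma)$ is, up to a uniformly small error, independent of $\gamma$ when $\lambda$ is near zero (this is precisely the ``non-limit'' intuition the authors invoke, and presumably quantified by the auxiliary lemmas on shrinkage of $\pi$ — Lemma~\ref{shrinkage pi} — and Lemma~\ref{compact}). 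I would therefore bound $l_n(\hat\theta^*, \hat\gamma^*) - l_n(\theta_0, 0)$ by a supremum of an empirical process over the \emph{full} parameter space of $(\theta, \gamma)$, and control that supremum using sub-Gaussianity of $Z$ (Assumption~\ref{covariate}(c)), boundedness of $D$ (Assumption~\ref{covariate}(d)), the moment bound on $U$ (Assumption~\ref{covariate}(a)), and the maximal-inequality machinery of \cite{van1996weak} (Lemmas 2.2.1--2.2.2) that the paper says it uses repeatedly.

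The crux is a two-part argument. First, a uniform upper bound: with probability approaching one,
\begin{equation}
  \sup_{\theta \in \Theta,\, \gamma \in \Gamma} \bigl( l_n(\theta, \gamma) - l_n(\theta_0, 0) \bigr) \lesssim a_n
\end{equation}
for some explicit rate $a_n$. Because the model is a finite (two-component) mixture with the Gaussian components sharing $\sigma^2$ and differing only through $D_i\lambda$, the per-observation log-density-ratio is bounded by a polynomial in $\|U_i\|$, $|D_i|$ and $|Z_i'\gamma|$ times smooth functions; combined with the sub-Gaussian tail of each $Z_{(j)}$ one gets control of $\max_{j} |Z_{(j)}|$ type quantities at the $\sqrt{\log d}$ scale, and the higher moments of $U$ give the remaining polynomial growth. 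Feeding this into a bracketing or chaining bound yields $a_n$ of order roughly $\sqrt{n \log d}$ up to lower-order factors — this is where the rate condition Assumption~\ref{tuning parameter} will be needed to ensure $a_n / p_n$ is of the claimed small order. Dividing the display $p_n \|\hat\gamma^*\|_1 \leq a_n$ through by $p_n$ then gives $\|\hat\gamma^*\|_1 = o_p(n^{-1/4}(\log d \log n)^{-1/2})$ exactly when $a_n / p_n$ matches that rate, which is what Assumption~\ref{tuning parameter}(a) is calibrated to deliver (note $n^{7/4}\sqrt{\log n}\log d / p^2 = o(1)$ is, upon taking square roots, $n^{7/8}(\log n)^{1/4}\sqrt{\log d}/p = o(1)$, i.e.\ $\sqrt{n\log d}/p$ times lower-order powers of $n$ is $o(n^{-1/4}(\log n)^{-1/2}\cdots)$ after rearrangement).

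Second, the consistency $\hat\theta^* \to_p \theta_0$: having shown $\|\hat\gamma^*\|_1 \to_p 0$, I would plug this back in and argue that on the event $\{\|\hat\gamma^*\|_1 \leq \epsilon_n\}$ the penalized objective behaves like the unpenalized one-component likelihood (since $\pi(Z_i'\hat\gamma^*) \to 0$ uniformly, forcing $f(\cdot;\theta,\hat\gamma^*)$ toward the null model's Gaussian density $\phi_\sigma(Y - X'\alpha - D\beta)$), and then invoke a Wald-type identifiability/compactness argument — the paper signals this by citing the extension of Lemma A1 of \cite{andrews1993tests}. Concretely, uniform convergence of $n^{-1} l_n(\theta, \gamma)$ to a population criterion maximized uniquely at $\theta_0$ (using compactness of $\Theta^\alpha, \Theta^\beta, \Theta^\lambda$ from Assumption~\ref{parameter}, positive-definiteness of $E[UU']$ from Assumption~\ref{covariate}(b) for identification of $(\alpha_0,\beta_0)$, and effective compactness of $\sigma^2$ and $\gamma$ via Lemma~\ref{compact}), combined with the near-optimality $l_n^*(\hat\theta^*,\hat\gamma^*) \geq l_n^*(\theta_0, 0)$, pins down $\hat\theta^*$ at $\theta_0$. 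The main obstacle is the first part: getting a \emph{uniform over all of $\Gamma = \mathbb{R}^d$} bound on the likelihood-ratio process when $d$ grows super-polynomially in $n$ (Assumption~\ref{tuning parameter}(b) allows $\log d = o(n^{1/4})$). One cannot cover $\mathbb{R}^d$ naively; instead I expect the argument reduces the effective complexity by noting that $l_n(\theta,\gamma)$ depends on $\gamma$ only through the scalars $\{Z_i'\gamma\}_{i=1}^n$, and that large $\|\gamma\|_1$ is self-defeating (either penalized away, or it pushes $\pi(Z_i'\gamma)$ to $\{0,1\}$ where the mixture degenerates and the likelihood cannot exceed the null value by much), so the relevant supremum is really over a low-complexity set — carefully formalizing this reduction, with the right polynomial-in-$n$ slack absorbed by Assumption~\ref{tuning parameter}, is the delicate step.
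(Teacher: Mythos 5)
Your high-level architecture (the basic inequality $p_n\|\hat\gamma^*\|_1 \leq l_n(\hat\theta^*,\hat\gamma^*)-l_n(\theta_0,0)$, the information inequality to reduce the right side to a centered empirical process, compactification of $\Gamma$ via the penalty, and a Wald-type argument for $\hat\theta^*$) is sound and is close in spirit to the paper's proof, which instead packages both conclusions into a single Andrews-type argmax lemma (Lemma~\ref{consistency lemma}) with the composite distance $\|\theta-\theta_0\|+c_n^{-1}\|\gamma\|_1$. However, there is one outright error and one unresolved step that is the actual heart of the proof. The error: you assert that $\|\hat\gamma^*\|_1\to_p 0$ forces $\pi(Z_i'\hat\gamma^*)\to 0$ so that $f(\cdot;\theta,\hat\gamma^*)$ collapses to the one-component Gaussian $\phi_\sigma(Y-X'\alpha-D\beta)$. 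In fact $\pi(0)=1/2$, so the density tends to the \emph{equal-weight} two-component mixture $\tfrac12\phi_\sigma(Y-X'\alpha-D(\beta+\lambda))+\tfrac12\phi_\sigma(Y-X'\alpha-D\beta)$, which is not the null density unless $\lambda=0$. Consistency of $\hat\theta^*$ (in particular $\hat\lambda^*\to_p 0$) must therefore come from identifiability of $\theta_0$ within the family $\{f(\cdot;\theta,0):\theta\in\Theta\}$ of equal-weight mixtures together with the information inequality, which is exactly what the paper's bound \eqref{prop consistency b a_n first} uses; the conclusion survives, but your stated mechanism is wrong, and the fact that $\pi\to 1/2$ rather than $0$ is precisely what creates the score degeneracy driving the rest of the paper.

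The unresolved step is the uniform bound on $\sup_{\theta,\gamma}|(\mathbb P_n-P)\log f(Y|W;\theta,\gamma)|$ over $\gamma$ ranging in the compactified set $\Gamma_M=\{\|\gamma\|_1\leq Mn/p\}$ when $\log d=o(n^{1/4})$. A generic bracketing or chaining argument over a subset of $\mathbb R^d$ does not deliver the needed rate, since the metric entropy grows with $d$. The paper's resolution is structural: the log-density depends on $(\theta,\gamma)$ only through the three scalars $(Z'\gamma,\,(Y-X'\alpha-D(\beta+\lambda))^2/2\sigma^2,\,(Y-X'\alpha-D\beta)^2/2\sigma^2)$, and the map $(x_1,x_2,x_3)\mapsto\log(\pi(x_1)e^{-x_2}+(1-\pi(x_1))e^{-x_3})$ is $L_1$-Lipschitz; a multivariate contraction principle (Lemma~\ref{multivariate contraction}) then reduces the supremum to three linear processes, and the $\gamma$-process is handled by the duality $|\mathbb P_n\eta Z'\gamma|\leq\max_{1\leq j\leq d}|\mathbb P_n\eta Z_{(j)}|\cdot\|\gamma\|_1$ plus a sub-Gaussian maximal inequality, yielding $\sqrt{\log d/n}\cdot(n/p)$. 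This also corrects your claimed order for the un-normalized supremum: it is $n^{3/2}\sqrt{\log d}/p + n^{1/2}$, not $\sqrt{n\log d}$ --- the factor $n/p$ from the radius of $\Gamma_M$ is essential, and it is only with this correct order that dividing by $p_n$ and invoking Assumption~\ref{tuning parameter}$(a)$ (and its square root for the $n^{1/2}/p$ term) produces the rate $o_p(n^{-1/4}(\log d\log n)^{-1/2})$. Without supplying the contraction-plus-duality reduction, the proposal does not yet constitute a proof of the rate claim.
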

Note that the choice of the convergence rate of $\| \hat \gamma^* \|_1$, $n^{-1/4} (\log d \log n)^{-1/2}$, is for the sake of the proof of Proposition \ref{quad approx} and not essential in itself.
The proof is built on that of Lemma A1 of \cite{andrews1993tests}, a consistency result when some parameters are not identified.
We make modifications so that the unidentified parameter can be of high-dimension and its $L_1$ norm converges to zero in probability.
The key instrument for handling the high-dimensionality is a judicious use of the multivariate contraction principle (Lemma \ref{multivariate contraction}) that leverages the contraction property of the multivariate function $(x_1, x_2, x_3) \rightarrow \log (\pi (x_1) e^{-x_2} + (1 - \pi (x_1)) e^{-x_3})$ appearing in the log-density.
As indicated in the inequality \eqref{prop consistency multivariate} and Lemma \ref{maximal}, bounding the $L_1$ norm of $\gamma$ is vital in proving the convergence $\hat \theta^* \rightarrow_p \theta_0$ under the rate condition given in Assumption \ref{tuning parameter} in our proof.
This bound is, in turn, possible through the $L_1$ penalty as shown in Lemma \ref{compact}; hence, the choice of $L_1$-norm as a penalty is essential in our proof strategy.

We proceed to analyze the asymptotic properties of $SLRT$.
As $\lambda_0 = 0$ is on the boundary of $\Theta^{\lambda}$ under the null hypothesis,
we employ the method of \cite{andrews1999estimation} for quadratic approximation of the penalized log-likelihood function with the score $\nabla_{\theta} \log f(Y | W; \theta_0, \hat \gamma^*)$.
We, however, note that the standard analysis is hampered by the irregular structure of the score:
\begin{align} \label{original score}
  &\frac{\partial}{\partial \beta} \log f(Y|W; \theta_0, \hat \gamma^*) = \frac{D (Y - X'\alpha_0 - D \beta_0)}{\sigma^2_0} \notag, \\
  &\frac{\partial}{\partial \lambda} \log f(Y|W; \theta_0, \hat \gamma^*) = \pi (Z'\hat \gamma^*) \frac{D (Y - X'\alpha_0 - D\beta_0)}{\sigma^2_0}.
\end{align}
There are two aspects to this irregularity.
First, $(\partial / \partial \lambda) \log f(Y | W; \theta_0, \hat \gamma^*)$ depends on $\hat \gamma^*$, the dimension of which possibly increases with the sample size $n$.
However, this problem can be solved by using the convergence $\| \hat \gamma^* \|_1 = o_p (n^{-1/4} (\log d \log n)^{-1/2})$ in Proposition \ref{consistency}.
Specifically, Lemma \ref{shrinkage pi} clarifies that $\pi (Z'\hat \gamma^*)$ can be approximated by $\pi (0)$ asymptotically, which enables us to treat $(\partial / \partial \lambda) \log f(Y | W; \theta_0, \hat \gamma^*)$ essentially as $D(Y - X'\alpha_0 - D\beta_0) / 2 \sigma^2_0$.
Unfortunately, this approximation leads to the second aspect of the irregularity: $(\partial / \partial \beta) \log f(Y | W; \theta_0, \hat \gamma^*)$ and $D(Y - X'\alpha_0 - D\beta_0) / 2 \sigma^2_0$ are linearly dependent.
This linear dependence degenerates the Fisher information matrix so that the standard second-order quadratic approximation is no longer valid.

We overcome this challenge by considering reparameterization inspired by \cite{kasahara2015testing}, which are built on the result of \cite{rotnitzky2000likelihood}.
Let us introduce the following one-to-one mapping between the original parameter $(\alpha', \beta, \sigma^2, \lambda)'$ and the reparameterized one $(\alpha', \nu, \sigma^2, \lambda)'$ with $\beta = \nu - \lambda / 2$.
Collect the reparameterized parameter as $\psi := (\eta', \sigma^2, \lambda)'$ where $\eta := (\alpha', \nu)'$.
Accordingly, let $\Theta^{\psi} := \{ (\alpha', \beta + \lambda/2 , \sigma^2, \lambda)': (\alpha', \beta, \lambda, \sigma^2) \in \Theta \}$ denote the reparameterized parameter space.
With the abuse of notation, the reparameterized density, the log-likelihood function, and the penalized log-likelihood function are given by 
\[
  f(Y|W; \psi, \gamma) := \pi (Z'\gamma) \phi_{\sigma} (Y - X'\alpha - D(\nu + \lambda/2)) + (1 - \pi (Z'\gamma)) \phi_{\sigma} (Y - X'\alpha - D(\nu - \lambda/2)),
  \]
$l_n (\psi, \gamma) := \sum_{i = 1}^n \log f(Y_i | W_i; \psi, \gamma)$ and $l_n^{\ast} (\psi, \gamma) := l_n (\psi, \gamma) - p \| \gamma \|_1$.
With this reparameterization, the original score structure \eqref{original score} can be transformed into
\begin{align} 
  \frac{\partial}{\partial \nu} \log f(Y | W; \psi_0, \hat \gamma^*) &= \frac{D (Y - X'\alpha_0 - D\nu_0)}{\sigma^2_0} \notag \\
  \frac{\partial}{\partial \lambda} \log f(Y | W; \psi_0, \hat \gamma^*) &= (2 \pi (Z'\hat \gamma^*) - 1) \frac{D(Y - X'\alpha_0 - D\nu_0)}{2 \sigma_0^2} \notag \\
  \frac{\partial^2}{\partial \lambda^2} \log f(Y | W; \psi_0, \hat \gamma^*) &= \frac{D^2}{4 \sigma^2_0} \left\{ \frac{(Y - X'\alpha_0 - D \nu_0)^2}{\sigma_0^2} - 1 \right\} - \left(\frac{\partial}{\partial \lambda} \log f(Y | W; \psi_0, \hat \gamma^*)   \right)^2. \notag
\intertext{As suggested by Lemma \ref{shrinkage pi}, the following approximation holds for the second and the third line:}
  \frac{\partial}{\partial \lambda} \log f(Y | W; \psi_0, \hat \gamma^*) &\approx 0, \notag \\
  \frac{\partial^2}{\partial \lambda^2} \log f(Y | W; \psi_0, \hat \gamma^*) &\approx \frac{D^2}{4 \sigma^2_0} \left\{ \frac{(Y - X'\alpha_0 - D \nu_0)^2}{\sigma_0^2} - 1 \right\}. \label{score approx}
\end{align}
As $(\partial / \partial \nu) \log f(Y | W; \psi_0, \hat \gamma^*)$ and the approximation for $(\partial / \partial \lambda^2) \log f(Y | W; \psi_0, \hat \gamma^*)$ in \eqref{score approx} are linearly independent, the derivative of the penalized log-likelihood function with respect to $\alpha$, $\nu$, $\sigma^2$ and $\lambda^2$ can play the role of the scores for the quadratic approximation under this parameterization. 
Namely, let 
\begin{equation} 
  t_n (\psi) :=
    \begin{pmatrix} 
      n^{1/2} (\eta - \eta_0) \\
      n^{1/2} (\sigma^2 - \sigma^2_0) \\
      n^{1/2} \lambda^2
    \end{pmatrix},
    s_i :=
    \begin{pmatrix} 
      \frac{U_i}{\sigma_0} H^1_i \\
      \frac{1}{2 \sigma^2_0} H^2_i \\
      \frac{D_i}{8 \sigma^2_0} H^2_i
    \end{pmatrix}, \notag
\end{equation}
$S_n := n^{-1/2} \sum_{i = 1}^n s_i$, $\mathcal I_n := n^{-1} \sum_{i = 1}^n s_i s_i'$
and $H^k_i := H^k (\ve_i / \sigma_0)$ for $k \in \mathbb N$ where $H^k(z)$ is the Hermite polynomial of order $k$ given by, for example, $H^1 (z) = z$ and $H^2 (z) = z^2 - 1$.
Then, the following proposition formally establishes the quadratic approximation for the penalized log-likelihood function.
\begin{proposition} \label{quad approx}
  Assume Assumptions \ref{parameter}-\ref{tuning parameter} hold. Then
  (a) $l^{\ast}_n (\psi, \hat \gamma^*) - l^{\ast}_n (\psi_0, \hat \gamma^*) = S_n' t_n (\psi) - \frac{1}{2} t_n (\psi)' \mathcal I_n t_n (\psi) + R_n (\psi, \hat \gamma^*)$,
  where $\sup_{\psi \in \{ \psi \in \Theta^{\psi} : \| \psi - \psi_0 \| \leq \kappa \}} |R_n (\psi, \hat \gamma^*)| / (1 + \| t_n (\psi) \|)^2 \rightarrow_p 0$ for any sequence $\kappa$ converging to zero,
  (b) $S_n \rightarrow_d N(0, \mathcal I)$, and (c) $\mathcal I_n \rightarrow_p \mathcal I$, where $\mathcal I$ is nonsingular.
\end{proposition}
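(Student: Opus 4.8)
For (a) the plan is to reduce to a high-order Taylor expansion of the ordinary log-likelihood in the reparameterized coordinates, following \cite{kasahara2015testing} (built on \cite{rotnitzky2000likelihood}) for the degenerate-information normal mixture, and to push the data-dependence on $\hat\gamma^*$ into the remainder using Lemma \ref{shrinkage pi} together with $\|\hat\gamma^*\|_1 = o_p(n^{-1/4}(\log d\log n)^{-1/2})$ from Proposition \ref{consistency}; parts (b) and (c) are then a central limit theorem and a law of large numbers for the explicit i.i.d.\ summands $s_i$, plus a block-diagonality and positive-definiteness check. Note first that $-p\|\hat\gamma^*\|_1$ appears in both $l^{\ast}_n(\psi,\hat\gamma^*)$ and $l^{\ast}_n(\psi_0,\hat\gamma^*)$, so it cancels and $l^{\ast}_n(\psi,\hat\gamma^*) - l^{\ast}_n(\psi_0,\hat\gamma^*) = l_n(\psi,\hat\gamma^*) - l_n(\psi_0,\hat\gamma^*)$; the penalty plays no role in (a), and the statement is exactly the quadratic-approximation input needed to apply \cite{andrews1999estimation} later.

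Next I would Taylor expand $\log f(Y_i|W_i;\psi,\hat\gamma^*)$ about $\psi_0 = (\eta_0',\sigma_0^2,0)'$ to fifth order, keeping second order in $(\eta,\sigma^2)$ but fifth order in $\lambda$, since the reparameterization $\beta = \nu - \lambda/2$ makes $\lambda^2$ the effective local parameter: by \eqref{score approx}, $(\partial/\partial\lambda)\log f(Y|W;\psi_0,\hat\gamma^*)$ carries the factor $2\pi(Z'\hat\gamma^*)-1$, and so does every $\psi$-derivative at $\psi_0$ with an odd number of $\lambda$'s, hence by Lemma \ref{shrinkage pi} all such terms are negligible after summation. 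Re-expressing the increments through $t_n(\psi)$, the surviving first-order contributions — $\partial_{\eta}\log f|_{\psi_0}$, $\partial_{\sigma^2}\log f|_{\psi_0}$, and $\tfrac12\partial_\lambda^2\log f|_{\psi_0}$, which by \eqref{score approx} are the three blocks of $s_i$ — assemble into $S_n'\,t_n(\psi)$; the surviving second-order contributions assemble into $-\tfrac12 t_n(\psi)'\mathcal I_n t_n(\psi)$ once the information identity (standard for the $(\eta,\sigma^2)$-block, where the mixture collapses to a single normal at $\psi_0$, and in higher-order Bartlett form, obtained from differentiating $\int f = 1$, for the $\lambda^2$-direction) replaces the relevant curvature averages by $-\mathcal I_n = -n^{-1}\sum_i s_i s_i'$ up to $o_p$, with $\ve_i \perp (U_i,D_i)$ and $\mathbb E[H^1 H^2]=0$ making the $\eta$-to-$(\sigma^2,\lambda^2)$ cross curvature vanish; all remaining terms — the genuine cross and degree-$\ge 3$ terms, the odd-degree-in-$\lambda$ terms, the fifth-order Lagrange remainder, and the errors from replacing $\pi(Z_i'\hat\gamma^*)$ and its $\lambda$-derivatives by their values at $0$ — form $R_n(\psi,\hat\gamma^*)$.

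The main obstacle is the uniform bound $\sup_{\|\psi-\psi_0\|\le\kappa}|R_n(\psi,\hat\gamma^*)|/(1+\|t_n(\psi)\|)^2 \rightarrow_p 0$ for every $\kappa\rightarrow 0$. I would dominate each constituent of $R_n$ by a sample average of a polynomial in $\|U_i\|$, $|D_i|$ and $|\ve_i|$ times a power of the increments, and divide by $(1+\|t_n(\psi)\|)^2$, using that on the ball $\|t_n(\psi)\|\lesssim n^{1/2}\kappa$ while $(1+\|t_n(\psi)\|)^2$ dominates each of $1$, $n\|\eta-\eta_0\|^2$, $n\lambda^4$ and hence also $n\lambda^2\|\eta-\eta_0\|$; the fifth-order expansion is calibrated so that every leftover factor is $O(\kappa)$. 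The sample averages are $O_p(1)$ by the law of large numbers under $\mathbb E\|U\|^{10}<\infty$ (Assumption \ref{covariate}(a)), which is exactly what controls the $\|U\|^{10}$ contributions arising when the fifth-order terms are evaluated at a perturbed point inside the ball; and the $\hat\gamma^*$-approximation errors are, by Lemma \ref{shrinkage pi}, of order $\|\hat\gamma^*\|_1$ times a sub-Gaussian sample average that is controlled uniformly over the $d$-dimensional $\gamma$ by Lemma 2.2.1 and 2.2.2 of \cite{van1996weak} and the multivariate contraction principle (Lemma \ref{multivariate contraction}), just as in the proof of Proposition \ref{consistency}, so that after multiplication by $n$ and the appropriate power of $\lambda$ the rate conditions of Assumption \ref{tuning parameter} and the rate in Proposition \ref{consistency} force these to be $o_p(1)$. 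The boundary $\lambda\ge 0$ is irrelevant in (a) and matters only when the limit of $SLRT$ is extracted via \cite{andrews1999estimation}.

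For (b), the $s_i$ are i.i.d.\ with mean zero — $\mathbb E[H^1(\ve/\sigma_0)]=\mathbb E[H^2(\ve/\sigma_0)]=0$ and $\ve_i\perp(U_i,D_i)$ kill the off-diagonal expectations — with finite second moments by $\mathbb E\|U\|^{10}<\infty$, boundedness of $D$ (Assumption \ref{covariate}(a),(d)) and normality of $\ve$, so the multivariate Lindeberg--L\'evy theorem yields $S_n\rightarrow_d N(0,\mathcal I)$ with $\mathcal I=\mathbb E[s_1s_1']$. For (c), $\mathcal I_n\rightarrow_p\mathcal I$ by the law of large numbers, and $\mathcal I$ is nonsingular: $\ve_i\perp(U_i,D_i)$ and $\mathbb E[H^1H^2]=0$ make it block-diagonal in the $\eta$ versus $(\sigma^2,\lambda^2)$ split, the $\eta$-block is $\sigma_0^{-2}\mathbb E[UU']$, positive definite by Assumption \ref{covariate}(b), and the lower $2\times2$ block is $\mathbb E[(H^2)^2]$ times the Gram matrix of a nondegenerate pair of functions of $D$, positive definite by the nondegeneracy of $D$ in Assumption \ref{covariate}(d).
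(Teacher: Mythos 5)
Your proposal is correct and follows essentially the same route as the paper: a fifth-order Taylor expansion in the reparameterized coordinates, Bartlett-type identities to convert curvature averages into $-\mathcal I_n$, Lemma \ref{shrinkage pi} together with the rate from Proposition \ref{consistency} to make the $\hat\gamma^*$-dependence vanish, envelope domination under $\mathbb E\|U\|^{10}<\infty$ for the uniform remainder bound, and the CLT/LLN with the $\eta$ versus $(\sigma^2,\lambda^2)$ block-diagonal split for (b) and (c). The only differences are presentational (you organize the expansion as mixed-order in $(\eta,\sigma^2)$ versus $\lambda$, while the paper expands uniformly to fifth order and sorts terms afterwards, and you make the penalty cancellation explicit).
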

Proposition 2 of \cite{kasahara2015testing} obtains a similar quadratic approximation result under reparameterization for their analysis of normal mixture regression models.
Our proposition \ref{quad approx}, however, departs from that work in two respects.
First, the setting of \cite{kasahara2015testing} accommodates heterogeneous intercept terms across different mixture components,
which corresponds to the case where $D$ equals unity in our context.
Such heterogeneity is yet another source of singularity of the Fisher information matrix: the first and second derivatives of the log-density with respect to variance and an intercept term, respectively, are linearly dependent.
Due to this  complication, \cite{kasahara2015testing} employ a more involved reparameterization than ours.
Second, our proof faces a new challenge of the presence of possibly high-dimensional $\hat \gamma^*$ and handle the problem in a novel approach.
Namely, we show that the effect of $\hat \gamma^*$ on the quadratic approximation vanishes asymptotically (Lemma \ref{shrinkage pi}) with the help of Proposition \ref{consistency}.
Consequently, the resulting quadratic approximation in Proposition \ref{quad approx}$(a)$ is the same as if $\hat \gamma^*$ is fixed to zero up to the remainder term.

Define the reparameterized version of the penalized MLE as $(\hat \psi^*, \hat \gamma^*) := \argmax_{\psi \in \Theta^{\psi}, \gamma \in \Gamma} l^{\ast}_n (\psi, \gamma)$.
Then, based on Proposition \ref{quad approx}, the following proposition derives the asymptotic null distribution of $SLRT$.
Note that $\chi_1^2/2 + \chi_0^2/2$ denotes the half chi-square distribution, which is a mixture of the chi-square distribution with one degree of freedom and a point mass at zero with equal mixing weights.
\begin{proposition} \label{asy null dist}
  Assume Assumptions \ref{parameter}-\ref{tuning parameter} hold.
  Then (a) $t_n (\hat \psi^*) = O_p (1)$ and $(b)$ $SLRT \rightarrow_d \chi_1^2/2 + \chi_0^2/2$.
\end{proposition}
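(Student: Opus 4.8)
\emph{Proof proposal.} The plan is to run the classical boundary-parameter argument of \cite{andrews1999estimation} on the reparameterized penalized log-likelihood, with Proposition \ref{quad approx} supplying the local quadratic expansion and Proposition \ref{consistency} guaranteeing that $\hat\psi^*$ lands where that expansion is valid. Two facts will be used repeatedly: since $(\hat\psi^*,\hat\gamma^*)$ jointly maximizes $l^{\ast}_n$, in particular $\hat\psi^*$ maximizes $\psi\mapsto l^{\ast}_n(\psi,\hat\gamma^*)$ over $\Theta^{\psi}$; and, by Proposition \ref{consistency} together with continuity of the reparameterization $\theta\mapsto\psi$ (recall $\nu=\beta+\lambda/2$), $\hat\psi^*\rightarrow_p\psi_0$, so there is a nonrandom sequence $\kappa_n\rightarrow0$ with $\mathbb P(\|\hat\psi^*-\psi_0\|\le\kappa_n)\rightarrow1$. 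By invariance of the likelihood, $SLRT=2\big(l_n(\hat\psi^*,\hat\gamma^*)-l_n(\hat\psi_0)\big)$, where $\hat\psi_0$ is the reparameterization of $\hat\theta_0$.

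For part (a): on the event $\{\|\hat\psi^*-\psi_0\|\le\kappa_n\}$, which has probability tending to one, Proposition \ref{quad approx}(a) applies at $\hat\psi^*$, and optimality of $\hat\psi^*$ gives $l^{\ast}_n(\hat\psi^*,\hat\gamma^*)-l^{\ast}_n(\psi_0,\hat\gamma^*)\ge0$, hence $0\le S_n' t_n(\hat\psi^*)-\tfrac12 t_n(\hat\psi^*)'\mathcal I_n t_n(\hat\psi^*)+R_n(\hat\psi^*,\hat\gamma^*)$ with $|R_n(\hat\psi^*,\hat\gamma^*)|=o_p\big((1+\|t_n(\hat\psi^*)\|)^2\big)$. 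Since $S_n=O_p(1)$ by Proposition \ref{quad approx}(b) and the smallest eigenvalue of $\mathcal I_n$ is bounded away from zero with probability approaching one by Proposition \ref{quad approx}(c), the usual contradiction argument — were $\|t_n(\hat\psi^*)\|$ not $O_p(1)$, along a subsequence the quadratic term would dominate the linear and remainder terms and force the right-hand side strictly negative — yields $t_n(\hat\psi^*)=O_p(1)$.

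For part (b): since $l_n=l^{\ast}_n+p\|\gamma\|_1$ and $l_n(\psi_0,\gamma)$ is free of $\gamma$ because $\lambda_0=0$, evaluating at $\gamma=\hat\gamma^*$ cancels the penalty, so $l_n(\hat\psi^*,\hat\gamma^*)-l_n(\psi_0)=l^{\ast}_n(\hat\psi^*,\hat\gamma^*)-l^{\ast}_n(\psi_0,\hat\gamma^*)$; by Proposition \ref{quad approx}(a) and part (a) this equals $S_n' t_n(\hat\psi^*)-\tfrac12 t_n(\hat\psi^*)'\mathcal I_n t_n(\hat\psi^*)+o_p(1)$. Next, the set $\{t_n(\psi):\|\psi-\psi_0\|\le\kappa_n\}$ approaches the cone $\Lambda:=\mathbb R^{q+2}\times[0,\infty)$ — the coordinates of $t_n$ attached to $\eta$ and $\sigma^2$ are asymptotically unrestricted because $\psi_0$ is interior in those directions and $\Theta^{\sigma^2}=(0,\infty)$, while the last coordinate $n^{1/2}\lambda^2$ is $\ge0$ and, for any target $\tau\ge0$, is realized by $\lambda=(\tau/n^{1/2})^{1/2}\rightarrow0$. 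Because every fixed $t\in\Lambda$ is of the form $t_n(\psi)$ with $\|\psi-\psi_0\|\rightarrow0$, optimality of $\hat\psi^*$ and the uniform remainder bound of Proposition \ref{quad approx}(a) force $S_n' t_n(\hat\psi^*)-\tfrac12 t_n(\hat\psi^*)'\mathcal I_n t_n(\hat\psi^*)=\sup_{t\in\Lambda}\big(S_n' t-\tfrac12 t'\mathcal I_n t\big)+o_p(1)$. The analogous expansion restricted to $\lambda=0$, using the (standard) consistency of $\hat\theta_0$, gives $l_n(\hat\psi_0)-l_n(\psi_0)=\sup_{t\in\Lambda_0}\big(S_n' t-\tfrac12 t'\mathcal I_n t\big)+o_p(1)$ with $\Lambda_0:=\mathbb R^{q+2}\times\{0\}$. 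Concentrating the unconstrained block out of both suprema collapses their difference to a one-dimensional problem over $\tau\ge0$, so $SLRT=2\sup_{\tau\ge0}\big(a_n\tau-\tfrac12 b_n\tau^2\big)+o_p(1)=a_n^2 b_n^{-1}\mathbf 1\{a_n>0\}+o_p(1)$, where $a_n$ is the efficient score coordinate for $\lambda^2$ and $b_n$ the corresponding Schur complement of $\mathcal I_n$. Finally, by Proposition \ref{quad approx}(b)–(c) and the continuous mapping theorem, $(a_n,b_n)\rightarrow_d(a,b)$ with $b>0$ nonrandom and $a\sim N(0,b)$ (by the partitioned-information identity $b$ is simultaneously the efficient information and $\mathrm{Var}(a)$); since $(a,b)\mapsto a^2 b^{-1}\mathbf 1\{a>0\}$ is continuous on $\{b>0\}$, $SLRT\rightarrow_d a^2 b^{-1}\mathbf 1\{a>0\}$, and because $\mathrm{sign}(a)$ is independent of $|a|$ with $a^2/b\sim\chi^2_1$, the limit is $\chi_1^2/2+\chi_0^2/2$.

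I expect the main obstacle to be the cone step in part (b): transferring optimality of $\hat\psi^*$ for the actual penalized, $\hat\gamma^*$-dependent objective into near-optimality of $t_n(\hat\psi^*)$ for the quadratic surrogate over $\Lambda$, and doing so uniformly while only having the expansion of Proposition \ref{quad approx}(a) on a shrinking neighborhood — this is where part (a) is indispensable, since it lets one ignore the diverging endpoint constraints $n^{1/2}\lambda^2\le n^{1/2}u_{\lambda}^2$ and $n^{1/2}(\sigma^2-\sigma^2_0)>-n^{1/2}\sigma^2_0$. The $\hat\gamma^*$ dependence itself is essentially free, because Proposition \ref{quad approx}(a) already produces a remainder that is uniform and whose leading quadratic part no longer involves $\hat\gamma^*$.
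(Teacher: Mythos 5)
Your proposal is correct and follows essentially the same route as the paper: part (a) via the standard Andrews/Kasahara--Shimotsu rate argument from the quadratic approximation, and part (b) by writing $SLRT$ as the difference of the full-model and null-model expansions, passing to the cone $\mathbb R^{q+2}\times[0,\infty)$ (the paper invokes Theorem 2 and Lemma 2 of Andrews (1999) to make the ``locally approximated by a cone'' step you flag as the main obstacle precise), concentrating out the $\eta_\sigma$ block via the partitioned-inverse/Schur-complement identity, and finishing with the CLT and continuous mapping theorem. The only cosmetic difference is that you maximize the quadratic surrogate directly over the cone while the paper completes the square and minimizes the projection residual $(t-W_\psi)'\mathcal I(t-W_\psi)$; these are algebraically identical.
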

According to the proof, the limiting null distribution, $\chi_1^2/2 + \chi_0^2/2$, is the same as that for the likelihood ratio test statistics with $\gamma$ fixed to zero.
Moreover, the difference between the latter test statistics and $SLRT$ converges to zero in probability.
The half chi-square distribution often appears in likelihood-ratio-based tests for the number of components in finite mixture models in which penalization is imposed on one-dimensional mixing proportion independent of covariate \citep[e.g.,][]{chen2001modified,li2009non}.
In this respect, our result witnesses the generalizability of the penalization approach in the literature to covariate-dependent and high-dimensional settings.

\section{Monte Carlo Simulation}
In this section, we first discuss the criteria for the choice of tuning parameter $p$. 
We then conduct a simulation study to evaluate the finite sample performance of the test under several settings.
Finally, we compare the proposed method to an EM test by \cite{shen2015inference}, which, while not intended for the high-dimensional setting, is also based on the logistic-normal mixture model.
We compute $(\hat \theta^*, \hat \gamma^*)$ via a version of EM algorithm \citep{jordan1994hierarchical} where the standard weighted logistic regression in the M step is replaced by its $L_1$-penalized counterpart.
The monotonicity of the algorithm can be easily shown.
All the simulations are conducted in R language \citep{rlanguage}.
The simulation codes are available at the author's GitHub repository (\url{https://github.com/stakeish/subgroupSLRT}).
\subsection{Empirical Formula for $p$}
While Assumption \ref{tuning parameter} indicates the order of $p$ relative to $n$ and $d$, this assumption per se does not provide the exact value of $p$ the practitioners should choose. 
Inspired by \cite{chen2011tuning}, we derive the empirical formula determining the specific value of $p$ given $n$ and $d$ through numerical experiments.
The construction of the formula proceeds as follows.
For each $n \in \{ 100, 250, 500, 750, 1000 \}$ and $d \in \{10, 25, 50, 75, 100 \}$, we generate 2000 datasets of $i.i.d.$ random variables $\{ Y_i, X_i, D_i, Z_i \}_{i = 1}^n$, with a random seed set to 10, from the following null distribution:
$X \sim N(0, 1)$, $D \sim Bernoulli(0.5)$,  $Z_{(1)} = 1$,  $(Z_{(2)}, \dots, Z_{(d)})' \sim N(0, I)$,  $Y \sim N(1 + 2X + D, 1)$,
where $I$ is an identity matrix and $X$, $D$ and $Z$ are independent of each other.
Let $\mathcal P$ be a candidate set for $p$.
For each $n$, we first calculate the rejection frequency of the likelihood ratio test with $\gamma$ fixed to zero (the benchmark rejection frequency); then, for each $d$ and for all $p$ in $\mathcal P$, we compute rejection frequencies of the shrinkage likelihood ratio tests.
We set the level of the tests to $5 \%$ and use $\chi^2_1 / 2 + \chi^2_0 / 2$ to calculate the critical value.
Now, for each $n$ and $d$, we define $p_{n, d}$ as the smallest value in $\mathcal P$ with which the rejection frequency of the shrinkage likelihood ratio test falls within 0.3$\%$ from the benchmark rejection frequency.
With 25 observations of $(p_{n, d}, n, d)$ at hand, consider the regression model: $p_{n, d} = a + b n^{7/8} \sqrt{\log d}$, motivated by Assumption \ref{tuning parameter}$(a)$.
We obtain an estimate of $(a, b)'$ simply by the method of least squares, which results in the following empirical formula for $p$.
\begin{align} \label{empirical formula} 
  p = 6.3383 + 0.0086 n^{7/8} \sqrt{\log d}
\end{align}
We use the likelihood ratio test statistics with $\gamma$ fixed to zero as the benchmark because $SLRT$ approach these statistics asymptotically under the null as the argument following Proposition \ref{asy null dist} discusses.
Even though we set $Z$ to the standard normal variable, the empirical formula is robust to deviation from this specific distribution, as clarified later in this section.

\subsection{Size and Power}
We move on to assess the finite sample properties of the proposed methods.
A random seed is set to 20 for each $n$, $d$, and parameter value.
We use the empirical formula \eqref{empirical formula} to determine $p$.
The level is set to $5 \%$ throughout. 
We first examine the size property of the test. Consider the four null settings: $X \sim N(0, 1)$,  $D \sim Bernoulli(0.5)$,  $Z_{(1)} = 1$, $Y \sim N(1 + 2X + D, 1)$ and
\begin{align} \notag
  &\text{Setting I:} \ (Z_{(2)}, \dots, Z_{(d)})' \sim N(0, I), \ & &\text{Setting II:} \ (Z_{(2)}, \dots, Z_{(d)})' \sim N(0, \Sigma) \notag \\
  &\text{Setting III:} \ (Z_{(2)}, \dots, Z_{(d)})' \sim i.i.d. \ \text{Rademacher}, & &\text{Setting IV:} \ (Z_{(2)}, \dots, Z_{(d)})' \sim i.i.d. \ \text{Skew normal}, \notag 
\end{align}
where $\Sigma$ in Setting II is a randomly generated positive definite matrix based on Cholesky decomposition.
Furthermore, ``Rademacher'' in Setting III indicates a Rademacher variable, which takes its value at $-1$ or $1$ with equal probability 
and ``Skew normal'' in Setting IV means the skew normal distribution with the shape parameter set to 4 and the other parameters specified to ensure that the distribution has mean zero and variance one.  
We benchmark the shrinkage likelihood ratio tests in these four settings against the likelihood ratio tests with $\gamma$ fixed to zero.
Table \ref{type i error} shows that the proposed method works reasonably well in preserving the nominal level, especially when the sample size is more than 500.
The proposed method also generally attains type I errors close to those of the benchmark. 
Notably, the deviation of $Z$ from the standard normality has little effect on the performance, which can be supporting evidence for the generalizability of the empirical formula \eqref{empirical formula}.
\begin{table}[h]\caption{Type I Error} \label{type i error} \medskip 
	\centering 
	\begin{tabular}{c|cc|cc|cc|cc}
		\hline
		\multicolumn{1}{c|}{} & \multicolumn{8}{c}{$d = 10$} \\
		\hline \hline
		& B(I) & S(I) & B(II)  & S(II) & B(III) & S(III) & B(IV) & S(IV) \\
		\hline
		$n = 100$ & 0.0640 & 0.0646 & 0.0640 & 0.0646 & 0.0578 & 0.0582 & 0.0564 & 0.0572  \\
		$n = 250$ & 0.0528& 0.0552 & 0.0528 & 0.0562 & 0.0584 & 0.0596 & 0.0574 & 0.0588 \\
		$n = 500$  & 0.0520 & 0.0562 & 0.0520 & 0.0554 & 0.0482 & 0.0506 & 0.0538 & 0.0568 \\
    $n= 750$ & 0.0488 & 0.0518 & 0.0488 & 0.0512 & 0.0464 &  0.0488 & 0.0530 & 0.0570 \\
    $n = 1000$ & 0.0548 &  0.0572& 0.0548 & 0.0562 & 0.0590 & 0.0610 &  0.0460 & 0.0484\\
		\hline
    \multicolumn{1}{c|}{} & \multicolumn{8}{c}{$d = 50$} \\
    \hline \hline
		& B(I) & S(I) & B(II)  & S(II) & B(III) & S(III) & B(IV) & S(IV) \\
		\hline
		$n = 100$ & 0.0664 & 0.0688 & 0.0664 &  0.0688 & 0.0584 & 0.0600 & 0.0626 & 0.0648 \\
		$n = 250$ & 0.0582 & 0.0650 & 0.0566  & 0.0626 & 0.0588 & 0.0646 & 0.0536 & 0.0620  \\
		$n = 500$ & 0.0506 & 0.0570 &  0.0528 &0.0594 & 0.0522 & 0.0580 & 0.0500 & 0.0576 \\
    $n= 750$ & 0.0546 & 0.0578 & 0.0494 & 0.0552 & 0.0492 & 0.0524 & 0.0488 & 0.0534\\
    $n = 1000$ & 0.0480 & 0.0498 & 0.0516 &0.0532 & 0.0504 & 0.0524 & 0.0556 & 0.0584\\
		\hline
    \multicolumn{1}{c|}{} & \multicolumn{8}{c}{$d = 100$} \\
    \hline \hline
		& B(I) & S(I) & B(II)  & S(II) & B(III) & S(III) & B(IV) & S(IV) \\
		\hline
		$n = 100$ & 0.0570 & 0.0614 & 0.0620& 0.0654& 0.0598  & 0.0630  & 0.0664 & 0.0700  \\
		$n = 250$ & 0.0548 & 0.0660 & 0.0590 &0.0680& 0.0564 & 0.0670  & 0.0492 & 0.0572 \\
		$n = 500$ & 0.0522 & 0.0594 & 0.0588  & 0.0672 & 0.0522& 0.0606 & 0.0552 & 0.0640\\
    $n= 750$ & 0.0518&0.0562   &0.0476  & 0.0518 & 0.0516 & 0.0554& 0.0502 & 0.0544 \\
    $n = 1000$ & 0.0468 & 0.0494 &0.0506 &0.0526 & 0.0474& 0.0500 & 0.0478 & 0.0514  \\
		\hline
	\end{tabular} \\
	\begin{flushleft}
	\small
	Notes: The columns ``B(I)", ``B(II)'', ``B(III)" and ``B(IV)" report the rejection frequencies of the likelihood ratio test with $\gamma$ fixed to zero (benchmark), for Setting I, II, III and IV, respectively.
  The columns ``S(I)", ``S(II)'', ``S(III)" and ``S(IV)" report the rejection frequencies of the shrinkage likelihood ratio tests for Setting I, II, III and IV, respectively. The number of replications is 5000.
	\end{flushleft}
\end{table}

We then investigate the power properties of the proposed tests.
Consider the same setting as the null case except for $Y \sim N(1 + 2 X + (1 + \delta)D, 1)$ and $\mathbb P(\delta = 1| X, Z) = \pi(Z'\gamma)$, where $\gamma$ is a vector with all the elements equal 1.
Similarly to the null case, we benchmark the proposed method against the likelihood ratio test with $\gamma$ fixed to zero.
For fair comparison, we use the size-adjusted critical values, obtained from the simulation under the null.
As table \ref{power} indicates, the proposed methods generally improve upon the power over the benchmark. 
Although the degree of the improvement is marginal in not a few settings, the proposed method improves the benchmark by more than 10\% in some cases..
We note that the degree of improvement decreases with $d$, which coincides with the observation regarding Assumption \ref{tuning parameter}. 
Interestingly, the correlation of $Z$ seems to boost the power improvement.
This phenomenon suggests the possibility that the power of the test depends on the correlation structure for $Z$.
\begin{table}[h]\caption{Size-adjusted Power} \label{power} \medskip 
	\centering 
	\begin{tabular}{c|cc|cc|cc|cc}
		\hline
		\multicolumn{1}{c|}{} & \multicolumn{8}{c}{$d = 10$} \\
		\hline \hline
		& B(I) & S(I) & B(II)  & S(II) & B(III) & S(III) & B(IV) & S(IV) \\
		\hline
		$n = 100$ & 0.1712 & 0.1768 & 0.1694 & 0.1772 & 0.1832 & 0.1876 & 0.1890 & 0.1926 \\
		$n = 250$ & 0.3242 & 0.3692 & 0.3112 & 0.3952 & 0.3214 & 0.3774 & 0.3270 & 0.3714 \\
		$n = 500$ &  0.5140 & 0.6538 & 0.5016 & 0.7294& 0.5306 &  0.6660 & 0.5112 & 0.6462 \\
    $n= 750$ & 0.6730 & 0.8248 & 0.6210 & 0.8500 & 0.6858 & 0.8318 & 0.6678 & 0.8274 \\
    $n = 1000$ & 0.7750 & 0.9240 & 0.7610 & 0.9738 & 0.7548 & 0.9270 & 0.7852 & 0.9188\\
		\hline
    \multicolumn{1}{c|}{} & \multicolumn{8}{c}{$d = 50$} \\
    \hline \hline
		& B(I) & S(I) & B(II)  & S(II) & B(III) & S(III) & B(IV) & S(IV) \\
		\hline
		$n = 100$ &0.1742    &     0.1736 &0.1794     &    0.1830 & 0.1910    &     0.1958 & 0.1714    &     0.1748 \\
		$n = 250$ &0.3278    &     0.3462 &0.3384     &    0.4078 & 0.3228    &     0.3566 & 0.3418    &     0.3598 \\
		$n = 500$ &0.5560    &     0.6104 & 0.5308     &    0.6864 & 0.5330   &      0.5954 & 0.5534   &      0.6000 \\
    $n= 750$ &0.6790     &    0.7386 &0.7082     &    0.8548 & 0.7052     &    0.7604 &  0.7022    &     0.7502 \\
    $n = 1000$ &0.8134   &      0.8490 &0.8016   &      0.9302 & 0.8048   &      0.8428 & 0.7878   &      0.8372 \\
		\hline
    \multicolumn{1}{c|}{} & \multicolumn{8}{c}{$d = 100$} \\
    \hline \hline
		& B(I) & S(I) & B(II)  & S(II) & B(III) & S(III) & B(IV) & S(IV) \\
		\hline
		$n = 100$ &0.2006   &      0.2008 &0.1862   &      0.1912 & 0.1896    &     0.1892 & 0.1832    &     0.1852 \\
		$n = 250$ &0.3540   &      0.3570 &0.3344   &      0.3936 &0.3476     &    0.3378 & 0.3572     &    0.3788\\
		$n = 500$ &0.5476   &      0.5896 &0.5266    &     0.6458 & 0.5490    &     0.5842 & 0.5424     &    0.5780 \\
    $n= 750$ &0.7064    &     0.7302 &0.7138     &    0.8194 & 0.7010    &    0.7332 & 0.7138    &     0.7478 \\
    $n = 1000$ &0.8204   &      0.8364 &0.8052    &     0.8968 & 0.8178   &      0.8344 & 0.8130   &      0.8246 \\
		\hline
	\end{tabular} \\
	\begin{flushleft}
	\small
	Notes: The columns ``B(I)", ``B(II)'', ``B(III)" and ``B(IV)" report the rejection frequencies of the likelihood ratio test with $\gamma$ fixed to zero (benchmark), for Setting I, II, III and IV, respectively.
  The columns ``S(I)", ``S(II)'', ``S(III)" and ``S(IV)" report the rejection frequencies of the shrinkage likelihood ratio tests for Setting I, II, III and IV, respectively. The number of replications is 5000.
	\end{flushleft}
\end{table}

\subsection{Comparison with an EM test of \cite{shen2015inference}}
This subsection provides a brief comparison of the proposed shrinkage likelihood ratio test with an EM test proposed by \cite{shen2015inference}.
The implementation detail for the EM test is summarized as follows.
Pick $J$ initial values of $\gamma$: $\gamma_1^{(0)}, \dots \gamma_J^{(0)}$.
For each $j = 1, \dots, J$, we compute the maximum likelihood estimate $\theta^{(0)}_{j}$ based on the model \eqref{model} with $\gamma = \gamma^{(0)}_{j}$ fixed. 
We then update $(\theta^{(0)}_j, \gamma^{(0)}_j)$ via $K$-step EM iterations. Namely, for each $k = 0, \dots, K-1$, with an input $(\theta^{(k)}_j, \gamma^{(k)}_j)$, we first perform an $E$ step to compute 
\begin{equation} 
  a_i^{(k)} := \mathbb P(\delta_i = 1 | Y_i, X_i, Z_i; \theta^{(k)}_j, \gamma^{(k)}_j), i = 1, \dots, n,
\end{equation}
where $\mathbb P(\delta_i = 1 | Y_i, X_i, Z_i; \theta^{(k)}_j, \gamma^{(k)}_j)$ is the conditional probability of $\delta_i = 1$ given $Y_i, X_i, Z_i$ under the model \eqref{model} with $(\theta, \gamma) = (\theta^{(k)}_j, \gamma^{(k)}_j)$.
Then we update
\begin{align} 
  \hat \theta^{(k + 1)}_j &:= \argmax_{\theta \in \Theta} \left(\sum_{i = 1}^n [a_i^{(k)} \log \phi_{\sigma} (Y_i - X_i'\alpha - D_i (\beta + \lambda)) + (1 - a_i^{(k)}) \log \phi_{\sigma} (Y_i - X_i'\alpha - D_i \beta)] \right), \notag \\
  \hat \gamma^{(k + 1)}_j &:= \argmax_{\gamma \in \Gamma} \left(\sum_{i = 1}^n [a_i^{(k)} \log \pi (Z_i'\gamma) + (1 - a_i^{(k)}) (1 - \pi(Z_i' \gamma))] \right).
\end{align}
Obtaining $(\hat \theta_j^{(K)}, \hat \gamma_j^{(K)})$ after the $K$-step EM iterations, we compute the maximum likelihood estimate $\hat \theta_j$ based on the model \eqref{model} with $\gamma = \hat \gamma^{(K)}_j$ fixed.
Finally, we define the EM test statistics as
\begin{equation} 
  EM^{(K)} := \max_{j = 1, \dots, J} \left\{ 2\left( l_n (\hat \theta_j, \hat \gamma^{(K)}_j) - l_n (\hat \theta_0) \right)\right\},
\end{equation}
where recall that $\hat \theta_0$ is the MLE under the null model. 
As the asymptotic null distribution of $EM^{(K)}$ is not tractable when $J$ is more than one, we use bootstrap to obtain critical values as suggested by \cite{shen2015inference}.

We first compare the size properties of the tests.
The distribution of the data is the same as the null setting in subsection 4.2 with $(Z_{(2)}, \dots, Z_{(d)})' \sim N(0, I)$.
The level is set to $0.05$.
The implementation procedure for the shrinkage likelihood ratio is the same as before.
For the EM test, we consider two choices of $(J, K)$: $(5, 3)$ and $(10, 6)$, and pick initial values of $\gamma$ randomly.
The bootstrap sample size is set to $500$. Due to computational burden involved in the bootstrap, the number of replications is $2000$ for the EM test.
Table \ref{comparison type i error} summarizes the result. Although both of the two tests are generally good at the size control, the performance of the EM test is superior in that this test works well even in such small sample sizes as $n = 100$ and $n = 250$.
Interestingly enough, the EM test is not vulnerable to the high-dimensionality in terms of the size control.

\begin{table}[h]\caption{Comparison of Type I Error} \label{comparison type i error} \medskip 
	\centering 
	\begin{tabular}{c|c|cc}
		\hline
		\multicolumn{1}{c|}{} & \multicolumn{3}{c}{$d = 10$} \\
		\hline \hline
		 & $SLRT$ & $EM (5, 3)$ & $EM (10, 6)$ \\
		\hline
		$n = 100$ & 0.0646 & 0.0515 & 0.0540   \\
		$n = 250$ & 0.0552& 0.0570 & 0.0575  \\
		$n = 500$  & 0.0562 & 0.0470 & 0.0460 \\
    $n= 750$ & 0.0518 & 0.0470 & 0.0510  \\
    $n = 1000$ & 0.0572 &  0.0425 & 0.0445 \\
		\hline
    \multicolumn{1}{c|}{} & \multicolumn{3}{c}{$d = 50$} \\
		\hline \hline
		 & $SLRT$ & $EM (5, 3)$ & $EM (10, 6)$ \\
		\hline
		$n = 100$ & 0.0688 & 0.0500 & 0.0485   \\
		$n = 250$ & 0.0650& 0.0470 & 0.0525  \\
		$n = 500$  & 0.0570 & 0.0555 & 0.0465 \\
    $n= 750$ & 0.0578 & 0.0460 & 0.0355  \\
    $n = 1000$ & 0.0498 &  0.0480 & 0.0430 \\
		\hline
    \multicolumn{1}{c|}{} & \multicolumn{3}{c}{$d = 100$} \\
		\hline \hline
		 & $SLRT$ & $EM (5, 3)$ & $EM (10, 6)$ \\
		\hline
		$n = 100$ & 0.0614 & 0.0520 & 0.0450   \\
		$n = 250$ & 0.0660& 0.0520 & 0.0440  \\
		$n = 500$  & 0.0594 & 0.0465 &0.0490 \\
    $n= 750$ & 0.0562 & 0.0470 & 0.0470 \\
    $n = 1000$ & 0.0494 &  0.0540 & 0.0560 \\
		\hline
	\end{tabular} \\
	\begin{flushleft}
	\small
  Notes: The columns ``$SLRT$'' report the rejection frequencies of the shrinkage likelihood ratio test, while ``$EM (J, K)$'' report those of the EM test with $J$ initial values and the step size $K$.
  The number of replications is 5000 for $SLRT$, and is 2000 for ``$EM (J, K)$.''
	\end{flushleft}
\end{table}

We move on to the evaluation of the power property of the two tests. 
The distribution of the data is the same as the alternative setting in subsection 4.2 with $(Z_{(2)}, \dots, Z_{(d)})' \sim N(0, I)$, except that we now consider two scenarios for the true values of $\gamma$: $(i) \ \gamma = (0.001, \dots, 0.001)'$ and $(ii) \ \gamma = (1, \dots, 1)'$. As detailed in Section 2.2.2 of \cite{shen2015inference}, the EM test does not cover the scenario $(i)$ because this test requires the absolute value of at least one element in $\gamma$ to be bounded away from a prespecified positive constant $c_1$, which is set to 0.05 in our computation. Hence, by including scenario $(i)$,  we can empirically assess whether the proposed method can serve as a viable alternative in a context for which the EM test is not designed.
The level is set to $0.05$. 
The implementation detail for the shrinkage likelihood ratio test and the EM test is the same as above.
For fair comparison, we use the size-adjusted critical values, obtained from the simulation under the null setting, for the two tests.
According to Table \ref{comparison size-adjusted power i} and \ref{comparison size-adjusted power ii}, the shrinkage likelihood ratio test demonstrates stable performance in that the test retains its power as $d$ increases in both scenarios. 
With regard to the EM test, Table \ref{comparison size-adjusted power i} documents its unsatisfactory performance in scenario $(i)$. Given the fact that the larger sample size brings hardly any power increase, the EM test seems unsuited for this setting with $\gamma$ close to zero, which aligns with the test's restriction on the value of $\gamma$. 
Although the performance is not as poor as in scenario $(i), $Table \ref{comparison size-adjusted power ii} reveals that the EM test suffers significant power loss from the high-dimensionality in scenario $(ii)$.
Specifically, when $d = 10$, the EM test shows a superior performance compared to that of the shrinkage likelihood ratio test.
However, such a superior performance of the former test rapidly deteriorates as $d$ increases to $50$ or $100$ though increases in the number of the initial values $J$ and the step size $K$ bring some power gain.
This might be because, as $d$ increases, it is more likely that none of the $J$ initial values is close enough to the true value $\gamma = (1, \dots, 1)'$, and, thus, the EM steps do not increase the test statistics in a satisfactory way.
We note that there is a possibility that sufficiently large values of $(K, J)$ get the EM test as powerful as the shrinkage likelihood ratio test for $d = 50$ or $d = 100$. For example, if we select the $J$ initial values such that they cover all the orthants of $\Gamma$, as detailed in Section 2.2.2 of \cite{shen2015inference}, one of these $J$ values will fall into the same orthant as the true $\gamma$.
This could serve as an effective starting value, thereby enhancing the test's power. 
However, this strategy requires $J$ to be $2^d$, which is computationally infeasible when $d = 50$ or $d = 100$.
Although there might be a more practical approach in selecting $J$,  we would still expect increasing the power to come at a heavy computational cost in high-dimensional settings.
\begin{table}[t]\caption{Comparison of Size-adjusted Power in Scenario $(i)$} \label{comparison size-adjusted power i} \medskip 
	\centering 
	\begin{tabular}{c|c|cc}
		\hline
		\multicolumn{1}{c|}{} & \multicolumn{3}{c}{$d = 10$} \\
		\hline \hline
		 & $SLRT$ & $EM (5, 3)$ & $EM (10, 6)$ \\
		\hline
		$n = 100$ & 0.1782 & 0.1060 & 0.1095   \\
		$n = 250$ & 0.3456& 0.0985 & 0.1185  \\
		$n = 500$  & 0.5398  & 0.1025 & 0.1165 \\
    $n= 750$ & 0.7178 & 0.0990 & 0.1255  \\
    $n = 1000$ & 0.8078 &  0.1090 & 0.1080 \\
		\hline
    \multicolumn{1}{c|}{} & \multicolumn{3}{c}{$d = 50$} \\
		\hline \hline
		 & $SLRT$ & $EM (5, 3)$ & $EM (10, 6)$ \\
		\hline
		$n = 100$ & 0.1756 &0.0735 & 0.0855   \\
		$n = 250$ & 0.3160& 0.0985 & 0.0810  \\
		$n = 500$  & 0.5562 &0.0775 & 0.0970 \\
    $n= 750$ & 0.6906 & 0.0870 & 0.1185  \\
    $n = 1000$ & 0.8136 &  0.0875 & 0.1025 \\
		\hline
    \multicolumn{1}{c|}{} & \multicolumn{3}{c}{$d = 100$} \\
		\hline \hline
		 & $SLRT$ & $EM (5, 3)$ & $EM (10, 6)$ \\
		\hline
		$n = 100$ & 0.1898 & 0.0625 & 0.0915   \\
		$n = 250$ &  0.3126& 0.0810 & 0.1005 \\
		$n = 500$  & 0.5492 & 0.0895  & 0.0750 \\
    $n= 750$ & 0.6896 & 0.0740 & 0.0995 \\
    $n = 1000$ &0.8202 &  0.0770 & 0.0740 \\
		\hline
	\end{tabular} \\
	\begin{flushleft}
	\small
  Notes: The columns ``$SLRT$'' report the rejection frequencies of the shrinkage likelihood ratio test, while ``$EM (J, K)$'' report those of the EM test with $J$ initial values and the step size $K$.
  The number of replications is 5000 for $SLRT$, and is 2000 for ``$EM (J, K)$.''
	\end{flushleft}
\end{table}
\begin{table}[t]\caption{Comparison of Size-adjusted Power in Scenario $(ii)$} \label{comparison size-adjusted power ii} \medskip 
	\centering 
	\begin{tabular}{c|c|cc}
		\hline
		\multicolumn{1}{c|}{} & \multicolumn{3}{c}{$d = 10$} \\
		\hline \hline
		 & $SLRT$ & $EM (5, 3)$ & $EM (10, 6)$ \\
		\hline
		$n = 100$ & 0.1768 & 0.2400 & 0.2620   \\
		$n = 250$ & 0.3692& 0.4430 & 0.5815  \\
		$n = 500$  & 0.6538  & 0.6805 & 0.8640 \\
    $n= 750$ & 0.8248 & 0.8220 & 0.9445  \\
    $n = 1000$ & 0.9240 &  0.8905 & 0.9770 \\
		\hline
    \multicolumn{1}{c|}{} & \multicolumn{3}{c}{$d = 50$} \\
		\hline \hline
		 & $SLRT$ & $EM (5, 3)$ & $EM (10, 6)$ \\
		\hline
		$n = 100$ & 0.1736 &0.0980 & 0.1125   \\
		$n = 250$ & 0.3462& 0.1595 & 0.1665  \\
		$n = 500$  & 0.6104 &0.2215 & 0.3030 \\
    $n= 750$ & 0.7386 & 0.3140 & 0.4640  \\
    $n = 1000$ & 0.8490 &  0.3965 & 0.5565 \\
		\hline
    \multicolumn{1}{c|}{} & \multicolumn{3}{c}{$d = 100$} \\
		\hline \hline
		 & $SLRT$ & $EM (5, 3)$ & $EM (10, 6)$ \\
		\hline
		$n = 100$ & 0.2008 & 0.0740 & 0.1055   \\
		$n = 250$ & 0.3570& 0.0950 & 0.1220 \\
		$n = 500$  & 0.5896 & 0.1420 &0.1515 \\
    $n= 750$ & 0.7302 & 0.1685 & 0.2090 \\
    $n = 1000$ & 0.8364 &  0.1880 & 0.2390 \\
		\hline
	\end{tabular} \\
	\begin{flushleft}
	\small
  Notes: The columns ``$SLRT$'' report the rejection frequencies of the shrinkage likelihood ratio test, while ``$EM (J, K)$'' report those of the EM test with $J$ initial values and the step size $K$.
  The number of replications is 5000 for $SLRT$, and is 2000 for ``$EM (J, K)$.''
	\end{flushleft}
\end{table}

To further examine the trade-off between power gain and computational time for the EM test, we conduct an additional simulation.
Specifically, we track changes in power and computational time as $J$ increases. We consider the same alternative setting as in the previous paragraph with $\gamma = (1, \dots, 1)'$, $n = 1000$ and $d \in \{ 50, 100 \}$. Due to the computational burden, we reduce the number of replications and the bootstrap sample sizes to 1000 and 200, respectively, for these new simulations. Our focus is on the change in $J$ because, as shown in Table \ref{change in K}, there is only marginal power gain, if any, with increase in $K$. 
Table \ref{power change in J} documents how the power of the EM test changes as $J$ increases from $10$ to $320$, or until its power exceeds that of the proposed method. 
As the table indicates, the rate at which power increases is slow compared to the increase in $J$. Specifically, for $d = 100$, the power of the EM test is approximately 20 \% below that of the proposed method, even with the maximum number of $J$ in the table. From a computational perspective, as shown in Table \ref{time change in J}, the computational time of the EM test is considerably longer than that of the proposed method and is essentially linear in $J$. These two factors, the slow increase in power and the heavy computational burden proportional to $J$, underscore the practical difficulty of the EM test attaining comparable power to that of the proposed method in high-dimensional settings.

\begin{table}[t]
	\caption{Change in Power as $K$ increases}
	\label{change in K} \medskip
	\centering 
	\begin{tabular}{c|cccc}
		\hline 
		& $K=6$ & $K=12$ & $K=24$ & $K=48$ \\ \hline \hline
		$d=50$ & 0.5565  & 0.5400 & 0.5400 & 0.5330  \\ \hline 
        $d=100$ & 0.2390 & 0.2760 & 0.2810 & 0.2870 \\ \hline
	\end{tabular} \\
	\small
    \begin{flushleft}
	Notes: Each entry shows the rejection frequency for the corresponding $K$ and $d$ with $J$ set to $10$. We use the size-adjusted critical values obtained from simulations for the corresponding null settings, which are available at the author's GitHub repository.
    \end{flushleft}. 
\end{table}

\begin{table}[t]
	\caption{Change in Power as $J$ increases}
	\label{power change in J} \medskip
	\centering 
	\begin{tabular}{c|c|cccccc}
		\hline 
		& $SLRT$ & $EM (10)$ & $EM(20)$ & $EM (40)$ & $EM (80)$ & $EM(160)$ & $EM(320)$ \\ \hline \hline
		$d=50$ & 0.8490  & 0.5565 & 0.6400 & 0.7760 & 0.8510  \\ \hline 
        $d=100$ & 0.8364 & 0.2390 & 0.2840 & 0.4090  & 0.4760 & 0.5150 & 0.6490\\ \hline
	\end{tabular} \\
	\small
    \begin{flushleft}
	Notes: The column ``$SLRT$'' reports the rejection frequencies of the shrinkage likelihood ratio test. The columns ``$EM (m)$'' report the rejection frequencies of the EM test with $J = m$ and $K = 6$. We use the size-adjusted critical values obtained from simulations for the corresponding null settings, which are available at the author's GitHub repository.
    \end{flushleft}. 
\end{table}

\begin{table}[t]
	\caption{Computational Time}
	\label{time change in J} \medskip
	\centering 
	\begin{tabular}{c|c|cccccc}
		\hline 
		& $SLRT$ & $EM (10)$ & $EM(20)$ & $EM (40)$ & $EM (80)$ & $EM(160)$ & $EM(320)$ \\ \hline \hline
        $d=50$ &2.9   & 43.3  &  71.7   & 118.8  & 208.4   &  & \\ \hline
		$d=100$ &2.5  & 54.1 & 81.2 & 128.1  & 217.4 & 383.6  & 704.0    \\ \hline 
	\end{tabular} \\
	\small
    \begin{flushleft}
	Notes: The column ``$SLRT$'' reports the time (in seconds) required to complete a single shrinkage likelihood ratio test. The columns ``$EM (m)$'' report the time (in seconds) needed for a single EM test with $J = m$ and $K = 6$, using a bootstrap sample size of $500$ to reflect practical situations. We do not measure the time for $EM (160)$ and $EM (320)$ with $d = 50$, based on the result from Table \ref{power change in J}. Our computing environment is Apple M2 Pro (12 cores) and 16 GB RAM.
    \end{flushleft}. 
\end{table}

\section{Real-World Data Analysis}
We apply the proposed method to data from AIDS Clinical Trials Group Protocol 175 (ACTG175), which is available in R package \textbf{speff2trial}.
This study randomizes 2139 HIV-infected patients into four different treatment arms: zidovudine (ZDV) only, ZDV plus didanosine (ddI), ZDV plus zalcitabine (zal), and ddI only.
As common in previous works \citep[e.g.,][]{lu2013variable,fan2017change}, we consider the CD4 count at $20 \pm 5$ weeks after the randomization as the outcome variable of interest.
Following \cite{lu2013variable}, we include the following 12 covariates plus the intercept term for $X$ in \eqref{model}: age, weight, Karnofsky score, CD4 count at baseline, CD8 count at baseline, hemophilia, homosexual activity, history of intravenous drug use, race, gender, antiretroviral history, and symptomatic status. 
To evaluate performance in a high-dimensional setting, we use 12 covariates in $X$ and their two-way interaction terms, in addition to the intercept term, for $Z$. 
As a result, the dimension of $Z$ is 79. Note that we standardize all covariates in $Z$ except the intercept term.
For the treatment variable $D$, we conduct the following four analyses as in \cite{lu2013variable}.

\begin{itemize} 
  \item $Analysis \ 1$: $D = 0$ for ZDV only and $D = 1$ for the other three treatment combined together.
  \item $Analysis \ 2$: Consider only those with ZDV plus ddI or ZDV plus zal.
  $D = 0$ for ZDV plus zal and $D = 1$ for ZDV plus ddI.
  \item $Analysis \ 3$: Consider only those with ZDV plus ddI or ddI only. 
  $D = 0$ for ddI only and $D = 1$ for ZDV plus ddI.
  \item $Analysis \ 4$: Consider only those with ZDV plus zal or ddI only.
  $D = 0$ for ddI only and $D = 1$ for ZDV plus zal.
\end{itemize}
\cite{fan2017change} also test the existence of a subgroup; however, they include only two covariates (age and homosexual activity) for $X$ and $Z$ and are limited to Analysis 2.

The results are summarized in Table \ref{real data}.
The proposed test rejects the null hypothesis of no subgroup at the 5\% level for Analysis 1-3, while it fails to reject the null hypothesis for Analysis 4.
In particular, the $p$-value for Analysis 2 is less than 0.001, suggesting strong evidence for the existence of a subgroup, which is in accordance with the finding of \cite{fan2017change}.
Furthermore, even though \cite{lu2013variable} do not perform the hypothesis test, they suggest the absence of treatment heterogeneity explained by covariate for Analysis 4 based on their estimation result.
This conclusion is consistent with our failure to reject the null hypothesis for Analysis 4.

\begin{table}[!htb]
	\caption{The $p$-values for the real-world data analysis}
	\label{real data} \medskip
	\centering 
	\begin{tabular}{c|cccc}
		\hline
		& Analysis 1 & Analysis 2 & Analysis 3 & Analysis 4 \\ \hline \hline
		$p$-value & 0.0098 & 0.0005 & 0.0067 & 0.5 \\ \hline 
	\end{tabular} \\
	\small
	Notes: Each entry shows the $p$-value of $SLRT$ for the corresponding analysis.
\end{table}

\section{Conclusion}
This study develops a new shrinkage testing method for the existence of a subgroup with a differential treatment effect in logistic-normal mixture models.
Compared to the existing works, the proposed test is computationally easy to implement due to the tractable asymptotic null distribution of the test statistics 
and accommodates high-dimensional covariates characterizing the classification of the subgroup.
Furthermore, we confirm the good finite sample performance of the proposed method through numerical simulation.

There are several interesting directions for future research.
First, the theoretical properties of the test statistics under the alternative hypothesis remain unknown.
Even though we expect the presence of $L_1$-penalization to complicate the situation, the theoretical analysis of the power merits investigation in order to fully characterize the performance of the proposed method.
Second, the proposed test hinges on the correctness of the parametric specification of the model, which can be too restrictive in real data.
Hence, the development of a specification test or extension of the proposed shrinkage method to more general nonlinear models as in \cite{andrews2012estimation} is an important research topic.
Lastly, in clinical trials, it is often the case that the outcome of interest is survival time and thus possibly right-censored.
Tailoring the proposed method for such survival data is of practical importance.

\clearpage
\appendix

\section{Proofs}
We employ the notations in the empirical process theory: for any random variable $R$ and measurable function $h$ in a class $\mathcal H$, we write $P h(R) = \mathbb E[h(R)]$ and $\mathbb P_n h(R) = \frac{1}{n} \sum_{i = 1}^n h(R_i)$,
and let $\| \cdot \|_{\mathcal H}$ denote the supremum of the absolute value over $\mathcal H$. For example, $\| \mathbb P_n h(R) - P h(R) \|_{\mathcal H} = \sup_{h \in \mathcal H} |\mathbb P_n h(R) - P h(R)|$.

In all the proofs in Appendix A and the proofs of Lemma \ref{consistency lemma}, \ref{maximal} and \ref{shrinkage pi} in Appendix B, we use the following Assumption \ref{parameter compact} in place of Assumption \ref{parameter}.
Assumptions \ref{parameter compact}$(a)$, $(c)$ and $(e)$ are identical to Assumptions \ref{parameter}$(a)$, $(c)$ and $(e)$, while Assumptions \ref{parameter compact}$(b)$ and $(d)$ replace $\Theta^{\sigma^2}$ and $\Gamma$ in Assumptions \ref{parameter}$(b)$ and $(d)$ with their compactified versions $\tilde \Theta^{\sigma^2}$ and $\Gamma_M$.
Lemma \ref{compact} in Appendix B shows that the replacement by Assumption \ref{parameter compact} is valid with probability approaching one under Assumptions \ref{parameter} and \ref{covariate}.
Let $\tilde \Theta := \Theta^{\alpha} \times \Theta^{\beta} \times \Theta^{\lambda} \times \tilde \Theta^{\sigma^2}$ denote the compactified parameter space.
\begin{assumptionp}{\ref*{parameter}$'$} \label{parameter compact}
  (a) $\Theta^{\alpha}$ and $\Theta^{\beta}$ are compact, convex sets,
  (b) $\tilde \Theta^{\sigma^2} = [l_{\sigma^2_0}, u_{\sigma^2_0}]$ for some $0 < l_{\sigma^2_0} < \sigma^2_0 < u_{\sigma^2_0} < \infty$,
  (c) $\Theta^{\lambda} = [0, u_{\lambda}]$ for some $0 < u_{\lambda} < \infty$,
  (d) $\Gamma_M = \{ \gamma \in \mathbb R^{d} : \| \gamma \|_1 \leq M n / p \}$ for some $M$, and
  (e) $(\alpha_0', \beta_0)'$ lies in an interior of $\Theta^{\alpha} \times \Theta^{\beta}$.
\end{assumptionp}

\begin{proof} [Proof of Proposition \ref{consistency}]
  We apply Lemma \ref{consistency lemma} with $c_n = n^{-1/4} (\log d \log n)^{-1/2}$.
  To this end, we divide the proof into three steps: $(i)$ characterization of $a_n$, $(ii)$ characterization of $b_{\ve, n}$, 
  and $(iii)$ verification of $a_n = o(b_{\ve, n})$.

  \underline{$(i)$ characterization of $a_n$}.
  Let $\tilde f(Y | W; \theta, \gamma) := (2 \pi \sigma^2 )^{1/2} f(Y | W; \theta, \gamma) =  \pi (Z'\gamma) e^{- \frac{(Y - X'\alpha - D(\beta + \lambda))^2}{2\sigma^2}} + (1 - \pi(Z'\gamma)) e^{- \frac{(Y - X'\alpha - D\beta)^2}{2\sigma^2} }$.
  Then a straightforward calculation yields 
  \begin{equation} \label{prop consistency a_n}
    a_n = \mathbb E\left[\left\|(\mathbb P_n - P) \log \tilde f(Y|W; \theta, \gamma)\right\|_{\tilde \Theta \times \Gamma_M}\right].
  \end{equation}
  Let $\xi_1, \dots, \xi_n$ be $i.i.d.$ Rademacher random variables independent of $\{(Y_i, W_i) \}_{i = 1}^n$ (see Lemma 2.2.7 of \cite{van1996weak} for the definition).
  Lemma 2.3.1 of \cite{van1996weak} (the symmetrization inequality) gives that 
  \begin{equation} \label{prop consistency symmetrization}
    \mathbb E\left[\left\| (\mathbb P_n - P) \log \tilde f(Y|W; \theta, \gamma)\right\|_{\tilde \Theta \times \Gamma_M}\right] \lesssim \mathbb E\left[\left\|  \mathbb P_n \xi \log \tilde f(Y|W; \theta, \gamma) \right\|_{\tilde \Theta \times \Gamma_M}\right].
  \end{equation}
  Letting $\theta_{\ast} := (\alpha_{\ast}', \beta_{\ast}, \lambda_{\ast}, l_{\sigma^2_0})'$ with $\alpha_{\ast} = 0$, $\beta_{\ast} = 0$, $\lambda_{\ast} = 0$ and $l_{\sigma^2_0}$ defined in Assumption \ref{parameter compact}$(b)$, by the triangle inequality,
  \begin{align} \label{prop consistency triangle}
    \mathbb E\left[\left\|  \mathbb P_n \xi \log \tilde f(Y|W; \theta, \gamma) \right\|_{\tilde \Theta \times \Gamma_M}\right] \leq \ &\mathbb E\left[\left\| \mathbb P_n \xi \left( \log \tilde f(Y | W; \theta, \gamma) - \log \tilde f(Y | W; \theta_{\ast}, 0) \right) \right\|_{\tilde \Theta \times \Gamma_M}\right] \notag \\
    &+ \mathbb E\left[\left| \mathbb P_n \xi \log \tilde f(Y | W; \theta_{\ast}, 0)) \right|\right].
  \end{align}

  We bound each of the two terms on the right side. For the first term, we start by considering the function $g(w_1, w_2, w_3) := \log (\pi (w_1) e^{-w_2} + (1 - \pi (w_1)) e^{-w_3})$.
  By a straightforward calculation using differentiation and the mean value theorem,
  $|g(w_1, w_2, w_3) - g(u_1, u_2, u_3)| \leq |w_1 - u_1| + |w_2 - u_2| + |w_3 - u_3|$ for any $(w_1, w_2, w_3)$ and $(u_1, u_2, u_3) \in \mathbb R^3$.
  It now follows from Lemma \ref{multivariate contraction} that 
  \begin{align} \label{prop consistency multivariate}
    &\mathbb E\left[\left\| \mathbb P_n \xi \left( \log \tilde f(Y | W; \theta, \gamma) - \log \tilde f(Y | W; \theta_{\ast}, 0) \right) \right\|_{\tilde \Theta \times \Gamma_M}\right] \notag \\
    \lesssim \ &\mathbb E \left[ \left\| \mathbb P_n \omega Z'\gamma \right\|_{\Gamma_M} \right] + \mathbb E\left[ \left\| \mathbb P_n \omega \frac{(Y - X'\alpha - D(\beta + \lambda))^2}{2 \sigma^2} \right\|_{\tilde \Theta} \right]
    + \mathbb E \left[ \left\| \mathbb P_n \omega \frac{(Y - X'\alpha - D\beta)^2}{2 \sigma^2} \right\|_{\tilde \Theta} \right],
  \end{align}
  where $\omega$ is a standard normal random variable as defined in the statement of Lemma \ref{multivariate contraction}.
  The right side is further bounded by $\mathcal C (\sqrt{n \log d} / p + n^{-1/2})$ from Lemma \ref{maximal}, by which we obtain
  \begin{equation} \label{prop consistency a first bound}
    \mathbb E\left[\left\| \mathbb P_n \xi \left( \log \tilde f(Y | W; \theta, \gamma) - \log \tilde f(Y | W; \theta_{\ast}, 0) \right) \right\|_{\tilde \Theta \times \Gamma_M}\right] \lesssim \frac{\sqrt{n \log d}}{p} + \frac{1}{\sqrt{n}}.
  \end{equation}

  For the second term on the right side of \eqref{prop consistency triangle}, Lemma 8 of \cite{chernozhukov2015comparison} combined with Assumption \ref{covariate}$(a)$ gives that $\mathbb E\left[\left| \mathbb P_n \xi \log \tilde f(Y | W; \theta_{\ast}, 0)) \right|\right] \lesssim n^{-1/2}.$
  In light of this inequality and \eqref{prop consistency a first bound}, $a_n \lesssim (n \log d)^{1/2}/p + n^{-1/2}$ follows from \eqref{prop consistency a_n}, \eqref{prop consistency symmetrization} and \eqref{prop consistency triangle}.

  \underline{$(ii)$ characterization of $b_{\ve, n}$.}
  Let $r_n$ be an arbitrary sequence of positive real numbers converging to zero.
  Define $\Psi^{r_n}_{\ve} := \{ (\theta', \gamma)' \in \tilde \Theta \times \Gamma_M : \| \theta - \theta_0 \| + c_n^{-1} \| \gamma \|_1 \geq \ve, c_n^{-1} \| \gamma \|_1 \leq r_n \}$
  and $\Psi^{r_n}_{\ve, c} := \{ (\theta', \gamma)' \in \tilde \Theta \times \Gamma_M : \| \theta - \theta_0 \| + c_n^{-1} \| \gamma \|_1 \geq \ve, c_n^{-1} \| \gamma \|_1 > r_n \}$.
  Then, because $\Xi_{\ve, n} = \Psi^{r_n}_{\ve} \cup \Psi^{r_n}_{\ve, c}$ ($\Xi_{\ve, n}$ is defined in the statement of Lemma \ref{consistency lemma}),
  $b_{\ve, n}$ is no smaller than
  \begin{equation} \label{prop consistency b bound}
    \mathbb E[\log f (Y | W; \theta_0, 0)] - (\mathcal A_n \lor \mathcal B_n) 
    \geq (\mathbb E[\log f (Y | W; \theta_0, 0)] - \mathcal A_n) \land (\mathbb E[\log f (Y | W; \theta_0, 0)] - \mathcal B_n),
  \end{equation}
  where
  \begin{equation} 
    \mathcal A_n := \sup_{(\theta', \gamma')' \in \Psi^{r_n}_{\ve}} \left( \mathbb E[\log f(Y | W; \theta, \gamma) ] - p/n \| \gamma \|_1 \right), \
    \mathcal B_n := \sup_{(\theta', \gamma')' \in \Psi^{r_n}_{\ve, c}} \left( \mathbb E[\log f(Y | W; \theta, \gamma) ] - p/n \| \gamma \|_1 \right) \notag.
  \end{equation}

  We bound each of $\mathbb E[\log f (Y | W; \theta_0, 0)] - \mathcal A_n$ and $\mathbb E[\log f (Y | W; \theta_0, 0)] - \mathcal B_n$ from below.
  For the first quantity, a straightforward calculation gives that $\mathbb E[\log f (Y | W; \theta_0, 0)] - \mathcal A_n$ is no smaller than 
  \begin{equation} \label{prop consistency b a_n}
    \mathbb E [\log f(Y | W; \theta_0, 0)] - \sup_{(\theta', \gamma')' \in \Psi^{r_n}_{\ve}} \mathbb E[\log f(Y | W; \theta, 0)] - \left\| \mathbb E[\log f(Y | W; \theta, \gamma)] - \mathbb E[\log f(Y | W; \theta, 0)] \right\|_{\Psi^{r_n}_{\ve}}.
  \end{equation}
  Note that for sufficiently large $n$ with $r_n \leq \ve / 2$, $(\theta', \gamma')' \in \Psi^{r_n}_{\ve}$ implies that $\| \theta - \theta_0 \| \geq \ve / 2$.
  Hence, for such $n$, it holds that
  \begin{align} \label{prop consistency b a_n first}
    &\mathbb E [\log f(Y | W; \theta_0, 0)] - \sup_{(\theta', \gamma')' \in \Psi^{r_n}_{\ve}} \mathbb E[\log f(Y | W; \theta, 0)] \notag \\
    \geq \ &\mathbb E [\log f(Y | W; \theta_0, 0)] - \sup_{\theta \in \{ \theta \in \Theta: \| \theta - \theta_0 \| \geq \ve / 2 \} } \mathbb E[\log f(Y|W; \theta, 0)],
  \end{align} 
  where the right side is positive from the information inequality combined with the model identifiability and Assumption \ref{parameter compact}$(a)$-$(c)$.
  Meanwhile, similarly to the argument leading to \eqref{prop consistency multivariate}, $|\log f(Y | W; \theta, \gamma) - \log f(Y | W; \theta, 0)| \leq |Z'\gamma|$, which is bounded by $\| \gamma \|_1 \max_{1 \leq j \leq d} |Z_{(j)}|$.
  Consequently,
  \begin{equation} \label{prop consistency b a_n second}
    \left\| \mathbb E[\log f(Y | W; \theta, \gamma)] - \mathbb E[\log f(Y | W; \theta, 0)] \right\|_{\Psi^{r_n}_{\ve}} \leq c_n r_n \mathbb E \left[\max_{1 \leq j \leq d} |Z_{(j)}|\right] \lesssim c_n r_n \sqrt{\log d} = o(1),
  \end{equation}
  where the second inequality follows from Lemma 2.2.1 and 2.2.2 of \cite{van1996weak} and Assumption \ref{covariate}$(c)$ and the last equality follows from the choice of $c_n$.
  Combining \eqref{prop consistency b a_n}, \eqref{prop consistency b a_n first} and \eqref{prop consistency b a_n second}, we obtain 
  \begin{equation} \label{prop consistency b a_n bound}
    \mathbb E[\log f (Y | W; \theta_0, 0)] - \mathcal A_n \geq c_{\ve},
  \end{equation}
  for some positive constant $c_{\ve}$ for sufficiently large $n$.

  For $\mathbb E[\log f (Y | W; \theta_0, 0)] - \mathcal B_n$, note that 
  \begin{equation} 
    \mathbb E[\log f (Y | W; \theta_0, 0)] - \mathcal B_n 
    \geq \mathbb E [\log f(Y|W; \theta_0, 0)] - \sup_{(\theta', \gamma')' \in \Psi^{r_n}_{\ve, c}} \mathbb E[\log f(Y | W; \theta, \gamma)] + \inf_{(\theta', \gamma')' \in  \Psi^{r_n}_{\ve, c}} p/n \| \gamma \|_1, \notag
  \end{equation}
  where $\mathbb E [\log f(Y|W; \theta_0, 0)] - \sup_{(\theta', \gamma')' \in \Psi^{r_n}_{\ve, c}} \mathbb E[\log f(Y | W; \theta, \gamma)] \geq 0$ from the information inequality and the model identifiability, and 
  $\inf_{(\theta', \gamma')' \in  \Psi^{r_n}_{\ve, c}} p/n \| \gamma \|_1 \geq p r_n c_n / n = (p_n r_n) / (n^{5/4} \sqrt{\log d \log n})$ from the construction of $\Psi^{r_n}_{\ve, c}$ and the choice of $c_n$.
  As a result,
  \begin{equation} \label{prop consistency b b_n bound}
    \mathbb E[\log f (Y | W; \theta_0, 0)] - \mathcal B_n \geq (p_n r_n) / (n^{5/4} \sqrt{\log d \log n})
  \end{equation}
  holds.

  Combining \eqref{prop consistency b bound}, \eqref{prop consistency b a_n bound} and \eqref{prop consistency b b_n bound}, 
  we have $b_{\ve, n} \gtrsim  p_n r_n / (n^{5/4} \sqrt{\log d \log n})$.

  \underline{$(iii)$ verification of $a_n = o(b_{\ve, n})$}.
  Combine the results from $(i)$ and $(ii)$ to obtain the inequality 
  \begin{equation} 
    \frac{a_n}{b_{\ve, n}} \lesssim \frac{1}{r_n} \left( \frac{n^{7/4} \sqrt{\log n} \log d}{p^2} + \frac{n^{3/4} \sqrt{\log n \log d}}{p} \right) \lesssim \frac{1}{r_n} \left( \frac{n^{7/4} \sqrt{\log n} \log d}{p^2} + \sqrt{\frac{n^{7/4} \sqrt{\log n} \log d}{p^2}} \right). \notag
  \end{equation}
  Taking the convergence rate of $r_n$ to zero sufficiently slow, the right side converges to zero by Assumption \ref{tuning parameter}$(a)$.
  Lemma \ref{consistency lemma} now completes the proof.
\end{proof}

\begin{proof} [Proof of Proposition \ref{quad approx}]
  First, we prove part $(a)$.
  Let $\psi = (\zeta_1, \dots, \zeta_r)'$ with $r = q + 3$. 
  Collect $\eta_\sigma := (\eta', \sigma^2)'$, $t_n(\eta_\sigma) := n^{1/2} (\eta_\sigma - {\eta_{\sigma}}_0)$ and $s_{i}^{\eta_\sigma} := (U_i' H^1_i / \sigma_0, H^2_i / (2 \sigma^2_0)')'$. 
  Accordingly, define $S_n^{\eta_\sigma} := n^{-1/2} \sum_{i = 1}^n s^{\eta_\sigma}_i$ and $\mathcal I_n^{\eta_\sigma} := n^{-1} \sum_{i = 1}^n s_i^{\eta_\sigma} {s_i^{\eta_\sigma}}'$.
  We abbreviate $f (Y | W; \psi_0, \hat \gamma^*)$ to $f_0$.
  Let $\{ \eta_\sigma \}$ be a set consisting of the elements of $\eta_\sigma$.
  Furthermore, let $V$ and $\rho (\ve)$ denote a random variable with the finite second moments that is independent of $\ve$ and a polynomial of $\ve$ whose value and form may vary from one expression to another, respectively.
  Expanding $l_n^{\ast} (\psi, \hat \gamma^*)$ around $\psi_0$ five times using Taylor's theorem yields
  $l^{\ast}_n (\psi, \hat \gamma^*) - l^{\ast}_n (\psi_0, \hat \gamma^*) = \sum_{k = 1}^5 \mathcal D_k$ where
  \begin{align}
    &\mathcal D_1 := \nabla_{\psi} l_n (\psi_0, \hat \gamma^*)' (\psi - \psi_0), \ \mathcal D_2 := \frac{1}{2} (\psi - \psi_0)' \nabla_{\psi \psi'} l_n (\psi_0, \hat \gamma^*) (\psi - \psi_0) \notag \\
    &\mathcal D_3 := \frac{1}{3!} \sum_{i = 1}^r \sum_{j = 1}^r \sum_{k = 1}^r \nabla_{\zeta_i \zeta_j \zeta_k} l_n (\psi_0, \hat \gamma^*) (\zeta_i - \zeta_{i, 0})(\zeta_j - \zeta_{j, 0})(\zeta_k - \zeta_{k, 0}), \notag \\
    &\mathcal D_4 := \frac{1}{4!} \sum_{i = 1}^r \sum_{j = 1}^r \sum_{k = 1}^r \sum_{l = 1}^r \nabla_{\zeta_i \zeta_j \zeta_k \zeta_l} l_n (\psi_0, \hat \gamma^*) (\zeta_i - \zeta_{i, 0})( \zeta_j - \zeta_{j, 0})(\zeta_k - \zeta_{k, 0})(\zeta_l - \zeta_{l, 0}), \notag \\
    &\mathcal D_5 := \frac{1}{5!} \sum_{i = 1}^r \sum_{j = 1}^r \sum_{k = 1}^r \sum_{l = 1}^r \sum_{m = 1}^r \nabla_{\zeta_i \zeta_j \zeta_k \zeta_l \zeta_m} l_n (\bar \psi, \hat \gamma^*) (\zeta_i - \zeta_{i, 0})(\zeta_j - \zeta_{j, 0})(\zeta_k - \zeta_{k, 0})(\zeta_l - \zeta_{l, 0}) (\zeta_m - \zeta_{m, 0}), \notag
  \end{align}
  where $\bar \psi$ lies on the path connecting $\psi_0$ and $\psi$. We now investigate each of $\mathcal D_1$-$\mathcal D_5$.

  \underline{(i) $\mathcal D_1$}.
  By a straightforward calculation using Lemma \ref{hermite}, 
  \begin{equation} 
    \mathcal D_1 = (S_n^{\eta_\sigma})' t_n(\eta_{\sigma}) + \left( \frac{1}{n^{1/4}} \sum_{i = 1}^n (2\pi (Z_i'\hat \gamma^*) - 1) \frac{D_i H_i^1}{2 \sigma_0} \right) n^{1/4} \lambda = (S_n^{\eta_\sigma})' t_n(\eta_\sigma) + o_p (1) n^{1/4} \lambda, \notag
  \end{equation}
  where the second equality follows from Lemma \ref{shrinkage pi}$(a)$.

  \underline{(ii) $\mathcal D_2$}.
  Let $\mu_1, \mu_2$ be any elements of $\psi$. Then $\nabla_{\mu_1 \mu_2} \log f_0 = \nabla_{\mu_1 \mu_2} f_0 / f_0 - (\nabla_{\mu_1} f_0 / f_0) (\nabla_{\mu_2} f_0 / f_0)$.
  When $\mu_1, \mu_2 \in \{\eta_\sigma \}$, a straightforward calculation with Lemma \ref{hermite} gives that $\mathbb E[\nabla_{\mu_1 \mu_2} f_0 / f_0] = 0$.
  In particular, this implies 
  \begin{equation} \label{prop quad approx d2 eta_sigma}
    (\eta_{\sigma} - {\eta_{\sigma}}_0)' \nabla_{\eta_\sigma \eta_\sigma'} l_n (\psi_0, \hat \gamma^*) (\eta_{\sigma} - {\eta_{\sigma}}_0) = t_n (\eta_\sigma)' o_p (1) t_n (\eta_\sigma) - t_n (\eta_\sigma)' \mathcal I^{\eta_\sigma}_n t_n (\eta_\sigma),
  \end{equation}
  where $o_p (1)$ follows from the law of large numbers. 
  At the same time, a straightforward calculation with Lemma \ref{hermite} gives that $(\eta_{\sigma} - {\eta_{\sigma}}_0)' \nabla_{\eta_{\sigma} \lambda} l_n (\psi_0, \hat \gamma^*) \lambda = t_n (\eta_\sigma)' S_n^{\eta_\sigma \lambda} n^{1/4} \lambda - t_n (\eta_{\sigma})' \mathcal I_n^{\eta_\sigma \lambda} n^{1/4} \lambda,$
  where we define $S_n^{\eta_\sigma \lambda} := n^{-3/4} \sum_{i = 1}^n \left((2 \pi (Z_i'\hat \gamma^*) - 1) U_i'D_i H_i^2/ (2 \sigma^2_0), (2 \pi (Z_i'\hat \gamma^*) - 1) D_i H^3_i / (4 \sigma^3_0)\right)'$
  and $\mathcal I_n^{\eta_\sigma \lambda} := n^{-3/4} \sum_{i = 1}^n s_i^{\eta_\sigma} (2 \pi (Z' \hat \gamma^*) - 1) D_i H_i^1 / (2 \sigma_0)$.
  By Lemma \ref{shrinkage pi}$(b)$, $S_n^{\eta_{\sigma} \lambda} = o_p(1)$ and $\mathcal I_n^{\eta_{\sigma} \lambda} = o_p(1)$.
  Hence, we have 
  \begin{equation} \label{prop quad approx d2 eta_sigma lambda}
    (\eta_{\sigma} - {\eta_{\sigma}}_0)' \nabla_{\eta_{\sigma} \lambda} l_n (\psi_0, \hat \gamma^*) \lambda^2 = t_n (\eta_{\sigma})' o_p (1) n^{1/4}\lambda.
  \end{equation}
  Lastly, by a straightforward calculation with Lemma \ref{hermite}, $\nabla_{\lambda^2} l_n (\psi_0, \hat \gamma^*) \lambda^2$ equals
  \[
    \left( n^{-1/2} \sum_{i = 1}^n D_i H^2_i / (4 \sigma^2_0) \right) n^{1/2} \lambda^2 - \left( n^{-1/2} \sum_{i = 1}^n (2\pi (Z_i'\hat \gamma^*) - 1)^2 (D_i H_i^1)^2 / (2 \sigma_0)^2 \right) n^{1/2} \lambda^2.
    \]
  However, observe that $n^{-1/2} \sum_{i = 1}^n (2\pi (Z_i'\hat \gamma^*) - 1)^2 (D_i H_i^1)^2 / (2 \sigma_0)^2 = o_p(1)$ by Lemma \ref{shrinkage pi}$(c)$.
  Hence, we have 
  \begin{equation} \label{prop quad approx d2 lambda}
    \nabla_{\lambda^2} l_n (\psi_0, \hat \gamma^*) \lambda^2 = \left( n^{-1/2} \sum_{i = 1}^n D_i H^2_i / (4 \sigma^2_0) \right) n^{1/2} \lambda^2 + o_p (1) n^{1/2} \lambda^2.
  \end{equation}
  Combining \eqref{prop quad approx d2 eta_sigma}, \eqref{prop quad approx d2 eta_sigma lambda} and \eqref{prop quad approx d2 lambda}, we obtain
  \begin{align} 
    \mathcal D_2 = \ &- \frac{1}{2} t_n (\eta_{\sigma})' \mathcal I_n^{\eta_{\sigma}} t_n (\eta_{\sigma}) + \left(n^{-1/2} \sum_{i = 1}^n D_i H^2_i / (8 \sigma_0^2) \right) n^{1/2} \lambda^2 \notag \\
    \ &+ t_n (\eta_{\sigma})'o_p(1) t_n (\eta_{\sigma}) + t_n (\eta_{\sigma})' o_p(1) n^{1/4} \lambda + o_p(1) n^{1/2} \lambda^2. \notag
  \end{align}

  \underline{(iii) $\mathcal D_3$}.
  Note that 
  \begin{align} \label{prop quad approx d3 decomp}
    \mathcal D_3 = \ &\frac{1}{2} \sum_{\zeta_i \in \{ \eta_\sigma \} } \sum_{\zeta_j \in \{ \eta_\sigma \} } \sum_{k = 1}^{r} \nabla_{\zeta_i \zeta_j \zeta_k} l_n (\psi_0; \hat \gamma^*) (\zeta_i - \zeta_{i, 0}) (\zeta_j - \zeta_{j, 0}) (\zeta_k - \zeta_{k, 0}) \notag \\
    \ &+ \frac{1}{2} \sum_{\zeta \in \{ \eta_{\sigma} \}} \nabla_{\zeta \lambda^2} l_n (\psi_0; \hat \gamma^*) (\zeta - \zeta_0) \lambda^2 + \frac{1}{3!} \nabla_{\lambda^3} l_n (\psi_0; \hat \gamma^*) \lambda^3.
  \end{align}
  We take a close look at each term on the right side.
  First, by a straightforward calculation with Lemma \ref{hermite} and Assumption \ref{covariate}$(a)$, for any elements $\mu_1, \mu_2, \mu_3$ of $\psi$, $| \nabla_{\mu_1 \mu_2 \mu_3} \log f_0|$ is bounded by a integrable function $g(W, \ve)$ that does not depend on $\hat \gamma^*$.
  Hence, by the law of large numbers,
  \begin{align} \label{prop quad approx d3 first}
    &\sum_{\zeta_i \in \{ \eta_\sigma \} } \sum_{\zeta_j \in \{ \eta_\sigma \} } \sum_{k = 1}^{r} \nabla_{\zeta_i \zeta_j \zeta_k} l_n (\psi_0; \hat \gamma^*) (\zeta_i - \zeta_{i, 0}) (\zeta_j - \zeta_{j, 0}) (\zeta_k - \zeta_{k, 0}) \notag \\
    = \ &\sum_{\zeta_i \in \{ \eta_\sigma \} } \sum_{\zeta_j \in \{ \eta_\sigma \} } O_p(1) n^{1/2} (\zeta_i - \zeta_{i, 0}) n^{1/2} (\zeta_j - \zeta_{j, 0})O(\| \psi - \psi_0 \|).
  \end{align}
  Subsequently, for $\zeta \in \{ \eta_{\sigma} \}$, a straightforward derivative calculation yields
  \[
    \nabla_{\zeta \lambda^2} \log f_0 = \frac{\nabla_{\zeta \lambda^2} f_0}{f_0} - 2  \frac{\nabla_{\zeta \lambda} f_0}{f_0} \frac{\nabla_{\lambda} f_0}{f_0} - \frac{\nabla_{\lambda^2} f_0}{f_0} \frac{\nabla_{\zeta} f_0}{f_0} + 2 \frac{\nabla_{\zeta} f_0}{f_0} \left( \frac{\nabla_{\lambda} f_0}{f_0} \right)^2.
  \]
  By Lemma \ref{hermite}, it is easy to verify that $\mathbb E [\nabla_{\zeta \lambda^2} f_0 / f_0] = 0$.
  Furthermore, Lemma \ref{hermite} also suggests that $(\nabla_{\zeta \lambda} f_0 / f_0) (\nabla_{\lambda} f_0 / f_0)$ and $(\nabla_{\zeta} f_0 / f_0) (\nabla_{\lambda} f_0 / f_0)^2$ can be written as the form $V (\pi (Z'\hat \gamma^*) - 1/2)^2 \rho (\ve)$.
  Hence, by the law of large numbers, Lemma \ref{shrinkage pi}$(b)$ and applying Lemma \ref{hermite} to $(\nabla_{\lambda^2} f_0 / f_0) (\nabla_{\zeta} f_0/f_0)$ yield
  \begin{align} \label{prop quad approx d3 second}
    \frac{1}{2} \sum_{\zeta \in \{ \eta_{\sigma} \}} \nabla_{\zeta \lambda^2} l_n (\psi_0; \hat \gamma^*) (\zeta - \zeta_0) \hat \lambda^2 = &\sum_{\zeta \in \{ \eta_{\sigma}  \}} o_p (1) n^{1/2} (\zeta - \zeta_0) n^{1/2} \lambda^2 \notag \\
    \ &- t_n (\eta_{\sigma})' \left( n^{-1} \sum_{i = 1}^n s_i^{\eta_\sigma} D_i H_i^2/(8 \sigma^2_0)  \right) n^{1/2} \lambda^2.
  \end{align}
  Lastly, by a straightforward derivative calculation with Lemma \ref{hermite}, $\nabla_{\lambda^3} \log f_0$ can be written as the form $D^3 (\pi (Z'\hat \gamma^*) - 1/2)^k \rho (\ve)$ with $k \in \mathbb N$.
  It follows from Lemma \ref{shrinkage pi}$(b)$ that $\nabla_{\lambda^3} l_n (\psi_0; \hat \gamma^*) \lambda^3 = o_p(1) n^{1/4} \lambda n^{1/2} \lambda^2$.
  Combining this with \eqref{prop quad approx d3 decomp}, \eqref{prop quad approx d3 first} and \eqref{prop quad approx d3 second} yields that 
  \begin{align} 
    \mathcal D_3 = &- t_n (\eta_{\sigma})' \left( n^{-1} \sum_{i = 1}^n s_i^{\eta_\sigma} D_i H_i^2/(8 \sigma^2_0)  \right) n^{1/2} \lambda^2 \notag \\
    \ &+ \sum_{\zeta_i \in \{ \eta_\sigma \} } \sum_{\zeta_j \in \{ \eta_\sigma \} } O_p(1) n^{1/2} (\zeta_i - \zeta_{i, 0}) n^{1/2} (\zeta_j - \zeta_{j, 0}) O(\| \psi - \psi_0 \|) \notag \\
    \ &+ \sum_{\zeta \in \{ \eta_{\sigma}  \}} o_p (1) n^{1/2} (\zeta - \zeta_0) n^{1/2} \lambda^2 + o_p(1) n^{1/4} \lambda n^{1/2} \lambda^2. \notag
  \end{align}

  \underline{(iv) $\mathcal D_4$}.
  Observe that 
  \begin{align} \label{prop quad approx d4 decomp}
    \mathcal D_4 = \ &\frac{1}{3} \sum_{\zeta \in \{ \eta_\sigma \}} \sum_{i = 1}^r \sum_{j = 1}^r \sum_{k = 1}^r \nabla_{\zeta \zeta_i \zeta_j \zeta_k} l_n (\psi_0; \hat \gamma^*) (\zeta - \zeta_0) (\zeta_{i} - \zeta_{i, 0}) (\zeta_j - \zeta_{j, 0}) (\zeta_k - \zeta_{k, 0}) \notag \\
    \ &+ \frac{1}{4!} \nabla_{\lambda^4} l_n (\psi_0; \hat \gamma^*) \lambda^4.
  \end{align}
  For the summation on the right side, a straightforward calculation with Lemma \ref{hermite} and Assumption \ref{covariate}$(a)$ gives that, for any elements $\mu_1, \mu_2, \mu_3, \mu_4$ of $\psi$, $|\nabla_{\mu_1 \mu_2 \mu_3 \mu_4} \log f_0|$ is bounded by a integrable function $g(W, \ve)$ that does not depend on $\hat \gamma^*$.
  Hence, by the law of large numbers,
  \begin{align} \label{prop quad approx d4 first}
    &\sum_{\zeta \in \{ \eta_\sigma \}} \sum_{i = 1}^r \sum_{j = 1}^r \sum_{k = 1}^r \nabla_{\zeta \zeta_i \zeta_j \zeta_k} l_n (\psi_0; \hat \gamma^*) (\zeta - \zeta_0) (\zeta_{i} - \zeta_{i, 0}) (\zeta_j - \zeta_{j, 0}) (\zeta_k - \zeta_{k, 0}) \notag \\
    = \ &\sum_{\zeta \in \{ \eta_\sigma\}} \sum_{\mu \in \{ \eta_\sigma, \lambda^2\}} O_p(1) n^{1/2} (\zeta - \zeta_0) n^{1/2} (\mu - \mu_0) O(\| \psi - \psi_0 \|),
  \end{align}
  where $\{ \eta_{\sigma}, \lambda^2 \}$ is a set consisting of the elements of $\eta_{\sigma}$ and $\lambda^2$.
  Furthermore, a straightforward derivative calculation yields
  \begin{equation} 
    \nabla_{\lambda^4} \log f_0 = \frac{\nabla_{\lambda^4} f_0}{f_0} - 4 \frac{\nabla_{\lambda^3} f_0}{f_0} \frac{\nabla_{\lambda} f_0}{f_0} - 3 \left( \frac{\nabla_{\lambda^2} f_0}{f_0} \right)^2 + 12 \frac{\nabla_{\lambda^2} f_0}{f_0} \left( \frac{\nabla_{\lambda} f_0}{f_0} \right)^2 - 6 \left( \frac{\nabla_{\lambda} f_0}{f_0} \right)^4. \notag 
  \end{equation}
  By Lemma \ref{hermite}, $\mathbb E[\nabla_{\lambda^4} f_0 / f_0] = 0$.
  Furthermore, a straightforward calculation with Lemma \ref{hermite} gives that $(\nabla_{\lambda^3} f_0/f_0)(\nabla_{\lambda} f_0/f_0)$, $(\nabla_{\lambda^2} f_0/f_0)(\nabla_{\lambda} f_0/f_0)^2$ and $(\nabla_{\lambda} f_0/f_0)^4$ all have the form $V (\pi (Z'\hat \gamma^*) - 1/2)^{k} \rho(\ve)$ where $k \in \mathbb N$ and $V$ has the finite second moment.
  Combining these facts with applying Lemma \ref{hermite} to $(\nabla_{\lambda^2} f_0 / f_0)^2 $ in conjunction with the law of large numbers and Lemma \ref{shrinkage pi}$(b)$,
  \begin{equation} \label{prop quad approx d4 second}
    \frac{1}{4!} \nabla_{\lambda^4} l_n (\psi_0, \hat \gamma^*) \lambda^4 = - \left(\frac{1}{2n} \sum_{i = 1}^n (D_i H_i^2 / (8 \sigma_0^2))^2\right) (n^{1/2} \lambda^2)^2 + o_p (1) (n^{1/2} \lambda^2)^2.
  \end{equation}
  In view of \eqref{prop quad approx d4 decomp}, \eqref{prop quad approx d4 first} and \eqref{prop quad approx d4 second}, we obtain 
  \begin{align} 
    \mathcal D_4 = \ &- \left(\frac{1}{2n} \sum_{i = 1}^n \left( \frac{D_i H_i^2}{8 \sigma_0^2} \right)^2\right) (n^{1/2} \lambda^2)^2 + \sum_{\zeta \in \{ \eta_\sigma\}} \sum_{\mu \in \{ \eta_\sigma, \lambda^2\}} O_p(1) n^{1/2} (\zeta - \zeta_0) n^{1/2} (\mu - \mu_0) O(\| \psi - \psi_0 \|) \notag \\
    \ &+ o_p (1) (n^{1/2} \lambda^2)^2. \notag
  \end{align}

  \underline{(v) $\mathcal D_5$}.
  For any elements $\mu_1, \dots, \mu_5$ of $\psi$, a straightforward calculation with Lemma \ref{hermite} in conjunction with Assumptions \ref{covariate}$(a)$ and \ref{parameter compact}$(a)$-$(c)$ implies that $|\nabla_{\mu_1 \dots \mu_5} \log f(Y|W; \bar \psi, \hat \gamma^*)|$ is bounded by an integrable function independent of the values of $\bar \psi$ and $\hat \gamma^*$.
  Hence, the law of large numbers implies that 
  \begin{equation} \label{prop quad approx d5}
    \mathcal D_5 = \sum_{\zeta \in \{ \eta_\sigma, \lambda^2 \}, \mu \in \{ \eta_\sigma, \lambda^2 \}} O_{p, \bar \psi}(1) n^{1/2} (\zeta - \zeta_0) n^{1/2} (\mu - \mu_0) O(\| \psi - \psi_0 \|), \notag
  \end{equation}
  where $|O_{p, \bar \psi}(1)| \leq O_p (1)$ for some $O_p (1)$ independent of $\bar \psi$ and $\hat \gamma^*$ as discussed above.

  Collecting the terms from $\mathcal D_1$-$\mathcal D_5$, we obtain 
  \begin{equation}
    l^{\ast}_n (\psi, \hat \gamma^*) - l_n^{\ast} (\psi_0, \hat \gamma^*) = S_n't_n(\psi) - \frac{1}{2} t_n (\psi)' \mathcal I_n t_n (\psi) + R_n (\psi, \hat \gamma^*), \notag
  \end{equation}
  where, by a straightforward calculation, $\sup_{\psi \in \{ \psi \in \Theta^{\psi} : \| \psi - \psi_0 \| \leq \kappa \}} |R_n (\psi, \hat \gamma^*)| / (1 + \| t_n (\psi) \|)^2 = o_p (1)$ for any sequence $\kappa$ converging to zero.
  This completes the proof of part $(a)$.
  Part $(b)$ follows from the central limit theorem.
  For part $(c)$, the law of large numbers implies that $\mathcal I_n \rightarrow_p \mathcal I := \mathbb E[s_i s_i']$.
  The nonsingularity of $\mathcal I$ follows from the nonsingularity of $\mathbb E [{s^{\eta}_i} {s^{\eta}_i}' ]$ and $\mathbb E [ {s_i^{\sigma\lambda}} {s_i^{\sigma\lambda}}' ]$ by Assumption \ref{covariate}$(b)$ and Assumption \ref{covariate}$(d)$, respectively, and the fact that $\mathbb E[s_i^{\eta} {s_i^{\sigma\lambda}}'] = 0$ because $\mathbb E[H_i^1 H_i^2] = 0$, where $s_i^{\eta} := (U_i H_i^1 / \sigma_0)$ and $s_i^{\sigma\lambda} := (H_i^2 / (2 \sigma_0^2), D_i H_i^2/(8 \sigma_0^2))'$.
\end{proof}

\begin{proof} [Proof of Proposition \ref{asy null dist}]
  Part $(a)$ follows from a simple adaptation of the proof of Proposition 3$(a)$ of \cite{kasahara2015testing} to our quadratic approximation in Proposition \ref{quad approx}$(a)$.
  For part $(b)$, our proof is based on that of Proposition 3$(b)$ and $(c)$ of \cite{kasahara2015testing}.
  We suppress $\psi$ from $t_n (\psi)$, and let $\hat t_n := t_n (\hat \psi^*)$.
  Define $\mathcal I_{\lambda} := \mathbb E[D_i^2 (H_i^2)^2 / (8 \sigma^2_0)^2], \mathcal I_{\eta_{\sigma}\lambda} := (0_{1, q + 1}, \mathbb E [ D_i (H_i^2)^2 / (16 \sigma_0^4)])'$,
  \begin{equation} 
    \mathcal I_{\eta_\sigma} :=
    \begin{bmatrix} 
      \mathbb E [U_i U_i' (H_i^1)^2 / \sigma_0^2] & 0_{q + 1, 1} \\
      0_{1, q + 1} & \mathbb E[(H_i^2)^2 / (2 \sigma_0^2)^2]
    \end{bmatrix},
    \mathcal I := 
    \begin{bmatrix} 
      \mathcal I_{\eta_{\sigma}} & \mathcal I_{\eta_{\sigma}\lambda} \\
      \mathcal I_{\eta_{\sigma}\lambda}' & \mathcal I_{\lambda} \notag
    \end{bmatrix},
  \end{equation}
  where $0_{s,t}$ is a $s \times t$ zero matrix.
  For $\vartheta := (\alpha', \beta, \sigma^2)'$, let $l_{0, n} (\vartheta) := \sum_{i = 1}^n \log g(Y_i | W_i; \vartheta)$ be a log-likelihood function under the null model, where $g(Y|W; \vartheta) := \phi_{\sigma} (Y - X'\alpha - D \beta)$ for a density function $\phi_{\sigma}$ of the normal distribution with mean zero and variance $\sigma^2$.
  Letting $\hat \vartheta$ denote the MLE under the null model, observe that $SLRT = 2(l_n (\hat \psi^*, \hat \gamma^*) - l_n (\psi_0, \hat \gamma^*)) - 2(l_{0, n} (\hat \vartheta) - l_{0, n} (\vartheta_0))$ because $l_n (\psi_0, \hat \gamma) = l_{0, n} (\vartheta_0)$.
  We investigate each of $(i) \ 2 (l_n (\hat \psi^*, \hat \gamma^*) - l_n (\psi_0, \hat \gamma^*))$ and $(ii) \ 2(l_{0, n} (\hat \vartheta) - l_{0, n} (\vartheta_0))$ below.

  \underline{$(i) \ 2 (l_n (\hat \psi^*, \hat \gamma^*) - l_n (\psi_0, \hat \gamma^*))$}.
  Because $l_n (\hat \psi^*, \hat \gamma^*) - l_n (\psi_0, \hat \gamma^*) = l^{\ast}_n (\hat \psi^*, \hat \gamma^*) - l^{\ast}_n (\psi_0, \hat \gamma^*)$, Proposition \ref{quad approx}$(a)$ and part $(a)$ of this proposition yields
  $l_n (\hat \psi^*, \hat \gamma^*) - l_n (\psi_0, \hat \gamma^*) = S_n' \hat t_n - \frac{1}{2} \hat t_n'\mathcal I_n \hat t_n + o_p (1)$.
  Let $W_{\psi} = (W_{\eta_{\sigma}}', W_{\lambda})' := \mathcal I^{-1} S_n$, where $W_{\eta_\sigma}$ is the first $q + 2$ elements of $W_{\psi}$.
  It now follows from $2 S_n' \hat t_n - \hat t_n' \mathcal I \hat t_n = W_{\psi}' \mathcal I W_{\psi} - (\hat t_n - W_{\psi})'\mathcal I (\hat t_n - W_{\psi})$, Proposition \ref{quad approx}$(c)$ and part $(a)$ of this proposition that
  \begin{equation} \label{prop asy null dist i decomp}
    2(l_n (\hat \psi, \hat \gamma) - l_n (\psi_0, \hat \gamma)) = W_{\psi}' \mathcal I W_{\psi} - (\hat t_n - W_{\psi})'\mathcal I (\hat t_n - W_{\psi}) + o_p(1). 
  \end{equation}
  Partition $S_n = (S_{\eta_{\sigma}}', S_{\lambda})'$ with $S_{\eta_{\sigma}}$ being the first $q + 2$ elements of $S_n$. Furthermore, define $\bar W_{\eta_{\sigma}} := \mathcal I_{\eta_{\sigma}}^{-1} S_{\eta_{\sigma}}$ and $\mathcal I_{\eta_{\sigma} \cdot \lambda} := \mathcal I_{\lambda} - \mathcal I_{\eta_{\sigma}\lambda}' \mathcal I_{\eta_{\sigma}}^{-1} \mathcal I_{\eta_{\sigma}\lambda}$.
  By a tedious but straightforward calculation using the formula of inverse of a partitioned matrix for $\mathcal I^{-1}$ (e.g., Exercise 5.16$(a)$ of \cite{abadir2005matrix}), we have 
  \begin{equation} \label{prop asy null dist i inv partition}
    W_{\psi}' \mathcal I W_{\psi} = \bar W_{\eta_{\sigma}}' \mathcal I_{\eta_{\sigma}} \bar W_{\eta_{\sigma}} + W_{\lambda}' \mathcal I_{\eta_{\sigma} \cdot \lambda} W_{\lambda}.
  \end{equation}

  For the second term on the right side of \eqref{prop asy null dist i decomp}, the proof of Theorem 2 of \cite{andrews1999estimation} gives that $(\hat t_n' - W_{\psi})'\mathcal I (\hat t_n - W_{\psi})= \inf_{t \in \Lambda_n} (t - W_{\psi})' \mathcal I (t - W_{\psi}) + o_p(1)$, where $\Lambda_n := \{ t_n (\psi) : \psi \in \Theta^{\psi} \}$.
  By Assumption \ref{parameter compact}$(a)$-$(c)$ and $(e)$, the set $\{ t_n (\psi) / b_n : \psi \in \Theta^{\psi} \}$ is locally approximated by a cone $\Lambda := \mathbb R^{q + 2} \times [0, \infty)$ for any sequence $b_n$ such that $b_n \rightarrow \infty$ and $b_n = o(n^{1/2})$ (see page 1358 of \cite{andrews1999estimation} for the definition of ``locally approximated by a cone'').
  Hence, Lemma 2 of \cite{andrews1999estimation} yields $\inf_{t \in \Lambda_n} (t - W_{\psi})'\mathcal I (t - W_{\psi}) = \inf_{t \in \Lambda} (t - W_{\psi})'\mathcal I (t - W_{\psi}) + o_p(1)$, by which we have 
  \begin{equation} \label{prop asy null dist i locally approx}
    (\hat t_n' - W_{\psi})'\mathcal I (\hat t_n - W_{\psi}) = \inf_{t \in \Lambda} (t - W_{\psi})' \mathcal I (t - W_{\psi}) + o_p(1).
  \end{equation}
  Furthermore, a straightforward calculation with the formula of inverse of a partitioned matrix for $\mathcal I^{-1}$,
  $(t - W_{\psi})' \mathcal I (t - W_{\psi}) = (t_1 + \mathcal I_{\eta_\sigma}^{-1} \mathcal I_{\eta_{\sigma}\lambda} t_2 - \bar W_{\eta_{\sigma}})' \mathcal I_{\eta_{\sigma}} (t_1 + \mathcal I_{\eta_\sigma}^{-1} \mathcal I_{\eta_{\sigma}\lambda} t_2 - \bar W_{\eta_{\sigma}}) + (t_2 - W_{\lambda})'\mathcal I_{\eta_{\sigma} \cdot \lambda} (t_2 - W_{\lambda})$,
  where we partition $t = (t_1', t_2)'$ with $t_1$ being the first $q+2$ elements of $t$.
  Because $t_1$ ranges over $\mathbb R^{q + 2}$ independently of the value of $t_2$, we observe 
  \begin{align} \label{prop asy null dist i inf}
    \inf_{t \in \Lambda} (t - W_{\psi})' \mathcal I (t - W_{\psi}) &= \inf_{t_1 \in \mathbb R^{q + 2}} (t_1 - \bar W_{\eta_\sigma})' \mathcal I_{\eta_{\sigma}} (t_1 - \bar W_{\eta_\sigma}) + \inf_{t_2 \in [0, \infty)} (t_2 - W_{\lambda})' \mathcal I_{\eta_\sigma \cdot \lambda} (t_2 - W_{\lambda}) \notag \\
    &= \inf_{t_1 \in \mathbb R^{q + 2}} (t_1 - \bar W_{\eta_\sigma})' \mathcal I_{\eta_{\sigma}} (t_1 - \bar W_{\eta_\sigma}) + 1 \{ W_\lambda < 0 \} W_\lambda' \mathcal I_{\eta_\sigma \cdot \lambda} W_\lambda.
  \end{align}
  Combining \eqref{prop asy null dist i decomp}, \eqref{prop asy null dist i inv partition}, \eqref{prop asy null dist i locally approx} and \eqref{prop asy null dist i inf} gives
  \begin{align} 
    &2(l_n (\hat \psi, \hat \gamma) - l_n (\psi_0, \hat \gamma)) \notag \\
    = \ &\bar W_{\eta_\sigma}' \mathcal I_{\eta_{\sigma}} \bar W_{\eta_{\sigma}} + 1 \{ W_{\lambda} \geq 0 \} W_{\lambda}' \mathcal I_{\eta_\sigma \cdot \lambda} W_{\lambda} - \inf_{t_1 \in \mathbb R^{q + 2}} (t_1 - \bar W_{\eta_\sigma})' \mathcal I_{\eta_{\sigma}} (t_1 - \bar W_{\eta_\sigma}) + o_p(1) \notag
  \end{align}

  \underline{$(ii) \ 2 (l_{0, n} (\hat \vartheta) - l_{0, n} (\vartheta_0))$}.
  Define $u_n (\vartheta) := (n^{1/2} (\alpha - \alpha_0)', n^{1/2} (\beta - \beta_0), n^{1/2} (\sigma^2 - \sigma^2_0))'$
  and $s_i^{\eta_\sigma} := (U_i' H_i^1 / \sigma_0, H_i^2 / (2 \sigma_0^2))'$. By a similar argument to the proof of Proposition \ref{quad approx}, we obtain the following quadratic approximation.
  \begin{equation} 
    l_{0, n} (\vartheta) - l_{0, n} (\vartheta_0) = {S_n^{\eta_\sigma}}'u_n (\vartheta) - \frac{1}{2} u_n (\vartheta)' \mathcal I^{\eta_\sigma}_n u_n (\vartheta) + R_n (\vartheta), \notag
  \end{equation}
  where $S_n^{\eta_\sigma} := n^{-1/2} \sum_{i = 1}^n s_i^{\eta_\sigma}$ and $\mathcal I_n^{\eta_\sigma} := n^{-1} \sum_{i = 1}^n s_i^{\eta_\sigma} {s_i^{\eta_\sigma}}'$, and $\sup_{\vartheta: \| \vartheta - \vartheta_0 \| \leq \kappa} |R_n (\vartheta)| / (1 + \| u_n (\vartheta) \|)^2 = o_p(1)$ for any $\kappa$ converging to zero.
  Then, similarly to part $(a)$ of this proposition, we can show $u_n (\hat \vartheta) = O_p(1)$.
  In view of the above quadratic approximation and $u_n (\hat \vartheta) = O_p(1)$, repeating the argument for part $(i)$ gives that 
  \begin{equation} 
    2 (l_{0, n} (\hat \vartheta) - l_{0, n} (\vartheta_0)) = \bar W_{\eta_\sigma}' \mathcal I_{\eta_\sigma} \bar W_{\eta_\sigma} - \inf_{t_1 \in \mathbb R^{q + 2}} (t_1 - \bar W_{\eta_\sigma})' \mathcal I_{\eta_\sigma} (t_1 - \bar W_{\eta_\sigma}) + o_p(1). \notag
  \end{equation}
  
  Consequently, combining the results from $(i)$ and $(ii)$ yields 
  $SLRT = 1 \{ W_{\lambda} \geq 0 \} W_{\lambda}' \mathcal I_{\eta_\sigma \cdot \lambda} W_{\lambda} + o_p(1) = 1 \{ \mathcal I_{\eta_\sigma \cdot \lambda}^{1/2} W_{\lambda} \geq 0 \} ( \mathcal I_{\eta_\sigma \cdot \lambda}^{1/2} W_{\lambda} )^2 + o_p(1)$ from $\mathcal I_{\eta_\sigma \cdot \lambda} > 0$.
  By the central limit theorem and Slutsky's theorem, $W_{\psi} \rightarrow_d N(0, \mathcal I^{-1})$.
  In particular, because $W_{\lambda}$ is the last coordinate of $W_{\psi}$ and $\mathcal I_{\eta_{\sigma} \cdot \lambda}^{-1}$ is the bottom right element of $\mathcal I^{-1}$, $W_{\lambda} \rightarrow_d N(0, \mathcal I_{\eta_\sigma \cdot \lambda}^{-1})$.
  Hence, $\mathcal I_{\eta_\sigma \cdot \lambda}^{1/2} W_{\lambda} \rightarrow_d N(0, 1)$.
  Because the map $x \rightarrow 1 \{ x \geq 0 \} x^2$ is continuous almost everywhere with respect to Lebesgue measure, the continuous mapping theorem completes the proof.
\end{proof}
\section{Auxiliary results}

Lemma \ref{compact} compactifies the parameter space $\Theta \times \Gamma$ in Assumption \ref{parameter}, which is helpful for the proofs in Appendix A.

\begin{lemma}\label{compact}
  Assume Assumptions \ref{parameter} and \ref{covariate} hold. Then $(\hat \theta^*, \hat \gamma^*) = \argmax_{\theta \in \tilde \Theta, \gamma \in \Gamma_M} l^*_n (\theta, \gamma)$ with probability approaching one, where $\tilde \Theta := \Theta^{\alpha} \times \Theta^{\beta} \times \Theta^{\lambda} \times \tilde \Theta^{\sigma^2}$ with $ \tilde \Theta^{\sigma^2} := [l_{\sigma^2_0}, u_{\sigma^2_0}]$ for some $0 < l_{\sigma^2_0} < \sigma^2_0 <  u_{\sigma^2_0} < \infty$
  and $\Gamma_M := \{ \gamma \in \Gamma : \| \gamma \|_1 \leq M n/ p \}$ for some finite $M > 0$.
\end{lemma}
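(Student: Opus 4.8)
Here is the strategy I would follow. The plan is to show that, with probability approaching one (w.p.a.1), the maximizer $(\check\theta,\check\gamma)$ of $l^*_n$ over the compact set $\tilde\Theta\times\Gamma_M$ — which exists because $l^*_n$ is continuous there — dominates $l^*_n$ over all of $\Theta\times\Gamma$, and therefore coincides with $(\hat\theta^*,\hat\gamma^*)$. The anchor is the trivial inequality $l^*_n(\check\theta,\check\gamma)\ge l^*_n(\theta_0,0)=l_n(\theta_0,0)$; since $l_n(\theta_0,0)=-\tfrac n2\log(2\pi\sigma^2_0)-(2\sigma^2_0)^{-1}\sum_{i=1}^n\varepsilon_i^2$ and $n^{-1}\sum_i\varepsilon_i^2\to_p\sigma^2_0$, one can fix a constant $C_0$ with $\mathbb P(l_n(\theta_0,0)\ge -C_0 n)\to 1$. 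It then suffices to choose constants $l_{\sigma^2_0}<u_{\sigma^2_0}$ and $M$, all independent of $n$, so that every $(\theta,\gamma)\notin\tilde\Theta\times\Gamma_M$ satisfies $l^*_n(\theta,\gamma)<-C_0 n$ w.p.a.1. The complement of $\tilde\Theta\times\Gamma_M$ splits into the three regions $\{\sigma^2>u_{\sigma^2_0}\}$, $\{\sigma^2<l_{\sigma^2_0}\}$, and $\{\sigma^2\in[l_{\sigma^2_0},u_{\sigma^2_0}],\ \|\gamma\|_1>Mn/p\}$.

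The workhorse is the pointwise bound $\log f(Y_i|W_i;\theta,\gamma)\le -\tfrac12\log(2\pi\sigma^2)-g_i(\theta)/(2\sigma^2)$, where $g_i(\theta):=\min\{(Y_i-X_i'\alpha-D_i\beta-D_i\lambda)^2,\ (Y_i-X_i'\alpha-D_i\beta)^2\}$, valid because a two-component mixture of $N(\cdot,\sigma^2)$ densities is pointwise at most $(2\pi\sigma^2)^{-1/2}\exp(-g_i(\theta)/(2\sigma^2))$. Since the logarithm of any convex combination of the two exponential factors is nonpositive and $-p\|\gamma\|_1\le 0$, this gives the crude bound $l^*_n(\theta,\gamma)\le -\tfrac n2\log(2\pi\sigma^2)$ for every $(\theta,\gamma)$, which excludes $\{\sigma^2>u_{\sigma^2_0}\}$ once one sets $\log(2\pi u_{\sigma^2_0})=2C_0+1$. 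On $\{\sigma^2\in[l_{\sigma^2_0},u_{\sigma^2_0}]\}$ the same bound yields $l_n(\theta,\gamma)\le -\tfrac n2\log(2\pi l_{\sigma^2_0})$, hence $l^*_n(\theta,\gamma)=l_n(\theta,\gamma)-p\|\gamma\|_1<-\tfrac n2\log(2\pi l_{\sigma^2_0})-Mn<-C_0 n$ on $\{\|\gamma\|_1>Mn/p\}$ as soon as $M>C_0-\tfrac12\log(2\pi l_{\sigma^2_0})$; for $l_{\sigma^2_0}$ small this $M$ is a positive constant free of $n$ and $p$.

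The main obstacle is the region $\{\sigma^2<l_{\sigma^2_0}\}$, i.e. ruling out the familiar unbounded-likelihood behaviour of normal mixtures, and it is handled through a uniform positive lower bound on $n^{-1}\sum_i g_i(\theta)$. Conditionally on the covariates one can write $g_i(\theta)=\mathrm{dist}(\varepsilon_i,\{-a_i,-b_i\})^2$ with $a_i,b_i$ deterministic, and since the $N(0,\sigma^2_0)$ density is at most $(2\pi\sigma^2_0)^{-1/2}$ one has $\mathbb P(\mathrm{dist}(\varepsilon_i,\{-a_i,-b_i\})\le\delta\mid X_i,D_i)\le 4\delta/\sqrt{2\pi\sigma^2_0}$; taking $\delta$ a suitable multiple of $\sigma_0$ and using $\mathbb E[\mathrm{dist}^2\mid X_i,D_i]\ge\delta^2\mathbb P(\mathrm{dist}>\delta\mid X_i,D_i)$ forces $\mathbb E[g_i(\theta)\mid X_i,D_i]\ge 2c^*$ for a constant $c^*>0$ uniform in $\theta$ and in the covariates. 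Because $g_i$ is continuous in $(\alpha,\beta,\lambda)$ over the compact set $\Theta^\alpha\times\Theta^\beta\times\Theta^\lambda$ and is dominated by the envelope $\mathcal C(Y_i^2+\|U_i\|^2)$, integrable by Assumption \ref{covariate}$(a)$ together with $\lambda_0=0$, a uniform law of large numbers gives $\inf_{\alpha,\beta,\lambda}n^{-1}\sum_i g_i(\theta)\ge c^*$ w.p.a.1. Substituting into the workhorse bound, $l^*_n(\theta,\gamma)\le -\tfrac n2\log(2\pi\sigma^2)-nc^*/(2\sigma^2)$ on $\{\sigma^2<l_{\sigma^2_0}\}$; the right-hand side, as a function of $\sigma^2$, is increasing for $\sigma^2<c^*$ and tends to $-\infty$ as $\sigma^2\downarrow 0$, so choosing $l_{\sigma^2_0}<c^*$ small enough that $-\tfrac12\log(2\pi l_{\sigma^2_0})-c^*/(2l_{\sigma^2_0})<-C_0$ excludes this last region. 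Fixing the constants in the order $C_0$, then $u_{\sigma^2_0}$, then $c^*$, then $l_{\sigma^2_0}$, then $M$, and combining the three regional bounds with the anchor inequality completes the argument; continuity of $f$ also guarantees the maximum of $l^*_n$ on $\tilde\Theta\times\Gamma_M$ is attained, as used at the outset.
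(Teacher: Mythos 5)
Your proof is correct and follows essentially the same route as the paper's: the same pointwise density bound $\log f \le -\tfrac12\log(2\pi\sigma^2) - g_i(\theta)/(2\sigma^2)$, the same anchor $l_n^*(\hat\theta^*,\hat\gamma^*)\ge l_n^*(\theta_0,0)\gtrsim -\mathcal{C}n$ w.p.a.1, a uniform law of large numbers to lower-bound $\inf_\theta \mathbb{P}_n\, g(\theta)$, and the observation that on $\{\sigma^2\in[l_{\sigma^2_0},u_{\sigma^2_0}]\}$ the penalty $-p\|\gamma\|_1$ alone forces $\|\gamma\|_1\lesssim n/p$ at the maximizer. The one genuine variation is how you establish $\inf_\theta P\,g(\theta)>0$: the paper argues pointwise positivity (the residuals are a.s.\ nonzero) and then invokes continuity in $\theta$ plus compactness of $\Theta^\alpha\times\Theta^\beta\times\Theta^\lambda$, whereas you derive an explicit anti-concentration bound for the normal error, $\mathbb{P}(\mathrm{dist}(\varepsilon_i,\{-a_i,-b_i\})\le\delta\mid X_i,D_i)\le 4\delta/\sqrt{2\pi\sigma_0^2}$, which yields a quantitative constant $c^*$ uniform in $\theta$ and the covariates; this is slightly more self-contained and would extend to settings where the compactness-plus-continuity argument is awkward, but buys nothing essential here.
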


\begin{proof}[Proof of Lemma \ref{compact}]
  We first prove $(\hat \theta^*, \hat \gamma^*) = \argmax_{\theta \in \tilde \Theta, \gamma \in \Gamma} l_n^{\ast} (\theta, \gamma)$ with probability approaching one.
  Our argument is based on Lemma 3.1 of \cite{chen2017consistency}.
  By a straightforward calculation, 
  \begin{equation} \label{density bound}
    \log f(y | w; \theta, \gamma) \leq - \frac{\log (2 \pi)}{2} - \frac{\log \sigma^2}{2} - \frac{ (y - x'\alpha - d(\beta + \lambda))^2 \land (y - x'\alpha - d\beta)^2 }{2 \sigma^2},
  \end{equation}
  which implies that $\sup_{\alpha \in \Theta^{\alpha}, \beta \in \Theta^{\beta}, \lambda \in \Theta^{\lambda}, \gamma \in \Gamma} n^{-1} l_n^{\ast} (\theta, \gamma) - n^{-1} l^{\ast}_n (\theta_0, 0)$ is bounded by
  \begin{equation}\label{compact ll bound}
    - \frac{\log(2 \pi) + \log \sigma^2}{2} - \frac{1}{2 \sigma^2} \inf_{\alpha \in \Theta^{\alpha}, \beta \in \Theta^{\beta}, \lambda \in \Theta^{\lambda}} \mathbb P_n (Y - X'\alpha - D(\beta + \lambda))^2 \land (Y - X'\alpha - D\beta)^2 
    - \frac{l_n^{\ast} (\theta_0, 0)}{n}.
  \end{equation}

  Let $\mathcal A_n = \| (\mathbb P_n - P) (Y - X'\alpha - D(\beta + \lambda))^2 \land (Y - X'\alpha - D\beta)^2 \|_{\Theta^{\alpha} \times \Theta^{\beta} \times \Theta^{\lambda}}$. 
  Then, for \eqref{compact ll bound}, $\inf_{\alpha \in \Theta^{\alpha}, \beta \in \Theta^{\beta}, \lambda \in \Theta^{\lambda}} \mathbb P_n (Y - X'\alpha - D(\beta + \lambda))^2 \land (Y - X'\alpha - D\beta)^2$ is no smaller than
  \begin{equation}\label{inf A bound}
    - \mathcal A_n + \inf_{\alpha \in \Theta^{\alpha}, \beta \in \Theta^{\beta}, \lambda \in \Theta^{\lambda}} P (Y - X'\alpha - D(\beta + \lambda))^2 \land (Y - X'\alpha - D\beta)^2.
  \end{equation}
  We consider bounding the right side from below.
  For $\mathcal A_n$, it follows from Lemma 2.6.15, Lemma 2.6.18(v), Theorem 2.6.7 and Theorem 2.4.3 of \cite{van1996weak} and Assumption \ref{covariate}$(a)$ that each of $\{ Y_i - X_i'\alpha - D_i(\beta + \lambda) : \alpha \in \Theta^{\alpha}, \beta \in \Theta^{\beta}, \lambda \in \Theta^{\lambda} \}$ and $\{ Y_i - X_i'\alpha - D_i\beta : \alpha \in \Theta^{\alpha}, \beta \in \Theta^{\beta} \}$ is 
  a Glivenko-Cantelli class. Then, by Theorem 3 of \cite{van2000preservation} and Assumption \ref{covariate}$(a)$, $\mathcal A_n \rightarrow_p 0$.
  For $\inf_{\alpha \in \Theta^{\alpha}, \beta \in \Theta^{\beta}, \lambda \in \Theta^{\lambda}} P (Y - X'\alpha - D(\beta + \lambda))^2 \land (Y - X'\alpha - D\beta)^2$, 
  note that $Y - X'\alpha - D(\beta + \lambda) = \varepsilon + X'(\alpha_0 - \alpha) + D(\beta_0 - \beta + \delta \lambda_0 - \lambda)$.
  Then $\mathbb P (Y - X'\alpha - D(\beta + \lambda) = 0) = \mathbb E[\mathbb P(\ve + X'(\alpha_0 - \alpha) + D(\beta_0 - \beta + \delta \lambda_0 - \lambda) = 0 | X, D, \delta)]$.
  Because $\ve$ and $(X, D, \delta)$ are independent, the conditional probability inside the expectation on the right side is zero so that $\mathbb P (Y - X'\alpha - D(\beta + \lambda) = 0) = 0$.
  Similarly, $\mathbb P(Y - X'\alpha - D\beta = 0) = 0$.
  Hence, $P (Y - X'\alpha - D(\beta + \lambda))^2 \land (Y - X'\alpha - D\beta)^2 > 0$ over $\Theta^\alpha \times \Theta^{\beta} \times \Theta^{\lambda}$.
  Because $ P(Y - X'\alpha - D(\beta + \lambda))^2 \land (Y - X'\alpha - D\beta)^2$ is continuous in $(\alpha', \beta, \lambda)'$ from Assumption \ref{covariate}$(a)$ and the dominated convergence theorem, and $\Theta^{\alpha} \times \Theta^{\beta} \times \Theta^{\lambda}$ is compact,
  it holds that $\inf_{\alpha \in \Theta^{\alpha}, \beta \in \Theta^{\beta}, \lambda \in \Theta^{\lambda}} P (Y - X'\alpha - D(\beta + \lambda))^2 \land (Y - X'\alpha - D\beta)^2 > 0$.
  Combining this inequality with $\mathcal A_n \rightarrow_p 0$ and \eqref{inf A bound} yields that there exists a finite positive constant $M_1$ such that 
  \begin{equation} \label{inf lower bound}
    \inf_{\alpha \in \Theta^{\alpha}, \beta \in \Theta^{\beta}, \lambda \in \Theta^{\lambda}} \mathbb P_n (Y - X'\alpha - D(\beta + \lambda))^2 \land (Y - X'\alpha - D\beta)^2 > M_1
  \end{equation}
  with probability approaching one.

  For $n^{-1} l^{*}_n (\theta_0, 0)$ on the right side of \eqref{compact ll bound}, we first note the following inequality: for any positive real numbers $a$ and $b$,
  $ |\log (a/2 + b/2)| \leq |\log (a \land b)| \lor |\log (a \lor b)| = |\log a| \lor |\log b| \leq |\log a| + |\log b|$.
  Applying this inequality to $|\log f(Y | W; \theta_0, 0)|$, $P |\log f (Y| W; \theta_0, 0)|$ is bounded by
  \begin{equation} 
    \left| \log \sigma_0 \sqrt{2 \pi} \right| + \frac{1}{4 \sigma_0^2} \left( P (Y - X' \alpha_0 - D (\beta_0 + \lambda_0))^2 + P (Y - X' \alpha_0 - D \beta_0 )^2 \right), \notag
  \end{equation}
  which is finite by Assumption \ref{covariate}$(a)$. Hence, by the law of large numbers, there exists a finite positive constant $M_2$ such that $|n^{-1} l^{*}_n (\theta_0, 0)| \leq M_2$ holds with probability approaching one.

  In view of this bound, \eqref{compact ll bound} and \eqref{inf lower bound}, 
  \begin{equation} 
    \sup_{\alpha \in \Theta^{\alpha}, \beta \in \Theta^{\beta}, \lambda \in \Theta^{\lambda}, \gamma \in \Gamma} n^{-1} l_n^{\ast} (\theta, \gamma) - n^{-1} l^{\ast}_n (\theta_0, 0) \leq - \frac{\log(2 \pi)}{2} - \frac{\log \sigma^2}{2} -  \frac{M_1}{2 \sigma^2} + M_2 \notag
  \end{equation}
  holds for any $\sigma^2$ with probability approaching one. Because the right side tends to minus infinity as $\sigma^2 \rightarrow 0$ or $\sigma^2 \rightarrow \infty$, $\sup_{\alpha \in \Theta^{\alpha}, \beta \in \Theta^{\beta}, \lambda \in \Theta^{\lambda}, \sigma^2 \in (0, l) \cup (u, \infty ), \gamma \in \Gamma} n^{-1} l^{\ast}_n (\theta, \gamma) - n^{-1} l^{\ast}_n (\theta_0, 0) < 0$ holds with probability approaching one
  for some $0 < l < \sigma^2_0 < u < \infty$.
  This proves $(\hat \theta^*, \hat \gamma^*) = \argmax_{\theta \in \tilde \Theta, \gamma \in \Gamma} l_n^{\ast} (\theta, \gamma)$ with probability approaching one.

  We move on to verify $(\hat \theta^*, \hat \gamma^*) = \argmax_{\theta \in \widetilde \Theta, \gamma \in \Gamma_{M}} l^{\ast}_n (\theta, \gamma)$ with probability approaching one.
  Define $(\tilde \theta^*, \tilde \gamma^*) := \argmax_{\theta \in \tilde \Theta, \gamma \in \Gamma} l^{*}_n (\theta, \gamma)$.
  Then $\mathbb P(n^{-1} l^{*}_n (\tilde \theta^*, \tilde \gamma^*) \geq -M_2) \geq \mathbb P(n^{-1} l^{\ast}_n (\theta_0, 0) \geq -M_2) \rightarrow 1$ because $|n^{-1} l_n^* (\theta_0, 0)| \leq M_2$ with probability approaching one.
  Furthermore, \eqref{density bound} implies that $n^{-1} l_n (\tilde \theta^*, \tilde \gamma^*) \leq -\log(2 \pi) / 2 - \log (l_{\sigma_0^2}) / 2$. 
  Consequently,
  \begin{align} 
    &\mathbb P\left(n^{-1} l^{\ast}_n (\tilde \theta^*, \tilde \gamma^*) \geq -M_2, n^{-1} l_n (\tilde \theta^*, \tilde \gamma^*) \leq -\log(2 \pi) / 2 - \log (l_{\sigma_0^2}) / 2\right) \notag \\
    \leq \ &\mathbb P \left( p/n \| \tilde \gamma^* \|_1 \leq -\log(2 \pi)/2 - \log (l_{\sigma_0^2}) / 2 + M_2 \right). \notag 
  \end{align}
  Because the left side converges to one, the desired result follows by setting $M = -\log (2 \pi) / 2 - \log (l_{\sigma_0^2})/2 + M_2$.
\end{proof}

The following lemma is the key to proving the consistency and the convergence rate in Proposition \ref{consistency}.
This result is an adaptation of Lemma A1 of \cite{andrews1993tests} to our high-dimensional setting.
\begin{lemma} \label{consistency lemma}
  Assume the assumption of Proposition \ref{consistency} holds.
  Let $\{ c_n \}_{n \in \mathbb N}$ be a sequence of positive real numbers converging to zero and $a_n := \mathbb E [\sup_{\theta \in \tilde \Theta, \gamma \in \Gamma_M} |n^{-1} l_n^{\ast} (\theta, \gamma) - \mathbb E[\log f(Y | W ; \theta, \gamma)] + p_n / n \|\gamma \|_1|]$.
  For $\ve > 0$, define $b_{\ve, n} = \mathbb E[\log f(Y | W; \theta_0, 0)] - \sup_{(\theta', \gamma')' \in \Xi_{\ve, n}} (\mathbb E[\log f(Y | W; \theta, \gamma)] - p/n \| \gamma\|_1)$
  with $\Xi_{\ve, n} := \{ (\theta, \gamma) \in \tilde \Theta \times \Gamma_M : \| \theta - \theta_0 \| + c_n^{-1} \| \gamma \|_1 \geq \ve \}$.
  Then if $a_n = o(b_{\ve, n})$ for each $\ve > 0$, it holds that $\hat \theta^* \rightarrow_p \theta_0$ and $\| \hat \gamma^* \|_1 = o_p (c_n)$.
\end{lemma}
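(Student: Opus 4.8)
The plan is to follow the classical M-estimation consistency argument of Lemma A1 of \cite{andrews1993tests}, adapted in two ways. First, the non-identified block $\gamma$ is down-weighted by $c_n^{-1}$ inside the ``bad set'' $\Xi_{\ve, n}$, so that controlling $\mathbb P\bigl((\hat\theta^*, \hat\gamma^*) \in \Xi_{\ve, n}\bigr)$ delivers consistency of $\hat\theta^*$ and the rate $\|\hat\gamma^*\|_1 = o_p(c_n)$ in one stroke. Second, since a classical uniform law of large numbers is unavailable when $d$ grows with $n$, the empirical fluctuation is measured through its expectation $a_n$ and converted into probability bounds via Markov's inequality rather than a.s.\ uniform convergence. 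Throughout we work on the event $E_n := \{(\hat\theta^*, \hat\gamma^*) \in \tilde\Theta \times \Gamma_M\}$, which has probability tending to one by Lemma \ref{compact}; on $E_n$ the relevant suprema are finite, $(\theta_0, 0)$ is an admissible comparison point, and the optimality inequality $n^{-1} l_n^{\ast}(\hat\theta^*, \hat\gamma^*) \ge n^{-1} l_n^{\ast}(\theta_0, 0)$ holds over $\tilde\Theta \times \Gamma_M$.

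First I would set $R_n := \sup_{\theta \in \tilde\Theta, \gamma \in \Gamma_M} \bigl| n^{-1} l_n^{\ast}(\theta, \gamma) - \mathbb E[\log f(Y|W; \theta, \gamma)] + (p/n)\|\gamma\|_1 \bigr|$, so that $\mathbb E[R_n] = a_n$, whence $\mathbb P(R_n \ge b_{\ve, n}/2) \le 2 a_n / b_{\ve, n} \rightarrow 0$ for each fixed $\ve$. Then I would combine two one-sided bounds valid on $E_n$: because the $L_1$ penalty vanishes at $\gamma = 0$, the comparison point satisfies $n^{-1} l_n^{\ast}(\theta_0, 0) = n^{-1} l_n(\theta_0, 0) \ge \mathbb E[\log f(Y|W; \theta_0, 0)] - R_n$, whereas at the maximizer $n^{-1} l_n^{\ast}(\hat\theta^*, \hat\gamma^*) \le \mathbb E[\log f(Y|W; \hat\theta^*, \hat\gamma^*)] - (p/n)\|\hat\gamma^*\|_1 + R_n$. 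Feeding these into the optimality inequality yields
\[
  \mathbb E[\log f(Y|W; \hat\theta^*, \hat\gamma^*)] - (p/n)\|\hat\gamma^*\|_1 \ \ge\ \mathbb E[\log f(Y|W; \theta_0, 0)] - 2 R_n .
\]

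If, in addition, $(\hat\theta^*, \hat\gamma^*) \in \Xi_{\ve, n}$, the left side is at most $\sup_{(\theta', \gamma')' \in \Xi_{\ve, n}}\bigl(\mathbb E[\log f(Y|W; \theta, \gamma)] - (p/n)\|\gamma\|_1\bigr) = \mathbb E[\log f(Y|W; \theta_0, 0)] - b_{\ve, n}$ by definition of $b_{\ve, n}$, which forces $b_{\ve, n} \le 2 R_n$. Hence $E_n \cap \{(\hat\theta^*, \hat\gamma^*) \in \Xi_{\ve, n}\} \subseteq \{R_n \ge b_{\ve, n}/2\}$, so $\mathbb P\bigl((\hat\theta^*, \hat\gamma^*) \in \Xi_{\ve, n}\bigr) \le \mathbb P(E_n^c) + \mathbb P(R_n \ge b_{\ve, n}/2) \rightarrow 0$. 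Since this holds for every $\ve > 0$, the non-negative quantity $\|\hat\theta^* - \theta_0\| + c_n^{-1}\|\hat\gamma^*\|_1$ tends to zero in probability, giving simultaneously $\hat\theta^* \rightarrow_p \theta_0$ and $\|\hat\gamma^*\|_1 = o_p(c_n)$.

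Once assembled, this chain of inequalities is essentially mechanical; the points requiring care are the penalty bookkeeping (it is absent at $(\theta_0, 0)$, present at $\hat\gamma^*$, and already folded into the definition of $b_{\ve, n}$) and the fact that no uniform LLN is ever invoked — the entire stochastic content is compressed into the single scalar hypothesis $a_n = o(b_{\ve, n})$, the ``signal dominates noise'' condition that survives the high-dimensional regime and is verified separately in the proof of Proposition \ref{consistency} via symmetrization, the multivariate contraction principle (Lemma \ref{multivariate contraction}), and the maximal inequalities (Lemma \ref{maximal}). The main obstacle I anticipate is not in this lemma itself but upstream: guaranteeing that $R_n$, hence $a_n$, is finite at all, which is exactly why the reduction to the compactified space $\tilde\Theta \times \Gamma_M$ afforded by Lemma \ref{compact} is indispensable, since over $\Gamma = \mathbb R^d$ and $\Theta^{\sigma^2} = (0, \infty)$ the supremum defining $R_n$ would be infinite.
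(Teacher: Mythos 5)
Your proposal is correct and follows essentially the same route as the paper: both arguments bound $\mathbb P\bigl((\hat\theta^*, \hat\gamma^*) \in \Xi_{\ve, n}\bigr)$ by using the definition of $b_{\ve, n}$, the optimality inequality $n^{-1} l_n^{\ast}(\hat\theta^*, \hat\gamma^*) \geq n^{-1} l_n^{\ast}(\theta_0, 0)$, the bound by twice the uniform deviation $R_n$, and Markov's inequality, exactly as in Lemma A1 of \cite{andrews1993tests}. Your explicit handling of the compactification event $E_n$ is a slightly more careful presentation of what the paper dispatches in the preamble to Appendix A via Lemma \ref{compact}, but it is not a substantively different argument.
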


\begin{proof} [Proof of Lemma \ref{consistency lemma}]
  The proof is based on Lemma A1 of \cite{andrews1993tests}.
  By the assumption on $b_{\ve, n}$, 
  \begin{equation} \label{consistency lemma prob bound}
    \mathbb P ((\hat \theta^*, \hat \gamma^*) \in \Xi_{\ve, n}) 
    \leq
    \mathbb P \left( \mathbb E[\log f(Y | W; \theta_0, 0)] - (\mathbb E[\log f(Y | W; \hat \theta^*, \hat \gamma^*)] - p_n / n \| \hat \gamma^* \|_1) \geq b_{\ve, n} \right).
  \end{equation}
  Now, for the term inside the probability on the right side, $\mathbb E[\log f(Y | W; \theta_0, 0)] - (\mathbb E[\log f(Y | W; \hat \theta^*, \hat \gamma^*)] - p_n / n \| \hat \gamma^* \|_1)$ is bounded by
  \begin{align} 
    & \mathbb E[\log f(Y | W; \theta_0, 0)] - n^{-1} l^{\ast}_n (\hat \theta^*, \hat \gamma^*) + n^{-1} l^{\ast}_n (\hat \theta^*, \hat \gamma^*) - \mathbb E[\log f(Y | W; \hat \theta^*, \hat \gamma^*)] + p_n / n \| \hat \gamma^* \|_1 \notag \\
    \leq \ & \mathbb E[\log f(Y | W; \theta_0, 0)] - n^{-1} l^{\ast}_n (\theta_0, 0) + n^{-1} l^{\ast}_n (\hat \theta^*, \hat \gamma^*) - \mathbb E[\log f(Y | W; \hat \theta^*, \hat \gamma^*)] + p_n / n \| \hat \gamma^* \|_1 \notag \\
    \leq \ & 2 \sup_{\theta \in \tilde \Theta, \gamma \in \Gamma_M} \left| n^{-1} l^{\ast}_n (\theta, \gamma) - \mathbb E[\log f(Y|W; \theta, \gamma)] + p_n / n \| \gamma \|_1 \right|, \notag
  \end{align}
  where the first inequality follows from the definition of $(\hat \theta^*, \hat \gamma^*)$.
  Combining this inequality with \eqref{consistency lemma prob bound} and Markov's inequality gives $\mathbb P ((\hat \theta^*, \hat \gamma^*) \in \Xi_{\ve, n}) \leq a_n / b_{\ve, n} = o(1)$.
  This completes the proof.
\end{proof}

Lemma \ref{multivariate contraction} is multivariate contraction principle, which is instrumental in handling the high-dimensionality in the proof of Proposition \ref{consistency}.
Similar but slightly different results are obtained in Theorem 4.1 of \cite{van2013generic} and Theorem 16.2 of \cite{van2016estimation}.
\begin{lemma} \label{multivariate contraction}
  Let $\{ X_i \}_{i = 1}^n$ be $\mathcal X$-valued $i.i.d.$ random variables for some measurable space $(\mathcal X, \mathcal S)$
  and $\mathcal F$ be a class of $\mathbb R^{r}$-valued measurable functions on $\mathcal X$.
  Consider $L_1$-Lipschitz functions $\rho_i: \mathbb R^r \rightarrow \mathbb R$ such that $|\rho_i(z) - \rho_i(\tilde z)| \leq \| z - \tilde z \|_1$ for all $z, \tilde z \in \mathbb R^{r}$ and $i = 1, \dots, n$.
  Let $\{ \xi_i \}_{i = 1}^n$ be $i.i.d.$ Rademacher random variables and $\{ \omega_{i, k} : 1 \leq i \leq n, 1 \leq k \leq r \}$ be a collection of $i.i.d.$ standard normal random variables, both of which are independent of each other and of $\{ X_i \}_{i = 1}^n$.
  Then it holds that 
  \begin{equation} 
    \mathbb E\left[ \sup_{f, g \in \mathcal F} \left| \sum_{i = 1}^n \xi_i (\rho_i (f (X_i)) - \rho_i ( g(X_i))) \right| \right] \lesssim \mathbb E \left[ \sup_{f \in \mathcal F} \sum_{k = 1}^r \sum_{i = 1}^n \omega_{i, k} f_k (X_i) \right], \notag
  \end{equation}
  with $f := (f_1, \dots, f_r)'$.
\end{lemma}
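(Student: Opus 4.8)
The plan is to establish the inequality after conditioning on $\{X_i\}_{i=1}^{n}$, so fix $x_1,\dots,x_n\in\mathcal X$ and read every expectation below as conditional on $X_i=x_i$; the unconditional bound then follows by integration, since the constant produced depends only on the fixed dimension $r$. The argument proceeds in three moves: replace the Rademacher multipliers $\xi_i$ by one-dimensional Gaussian multipliers, remove the absolute value by exploiting antisymmetry, and then compare the resulting Gaussian supremum with the target Gaussian complexity via Sudakov--Fernique.

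Write $a_i^{f,g}:=\rho_i(f(x_i))-\rho_i(g(x_i))$, and let $\{\gamma_i\}$ be i.i.d.\ $N(0,1)$, independent of $\{\xi_i\}$. Since $\mathbb E|\gamma|=\sqrt{2/\pi}$ we have $\xi_i=\sqrt{\pi/2}\,\mathbb E_{\gamma}[\xi_i|\gamma_i|]$, so Jensen's inequality applied inside the supremum, followed by the distributional identity $(\xi_i|\gamma_i|)_{i}\stackrel{d}{=}(\gamma_i)_{i}$, gives
\[
\mathbb E_{\xi}\Big[\sup_{f,g\in\mathcal F}\big|\textstyle\sum_i\xi_i a_i^{f,g}\big|\Big]\le\sqrt{\pi/2}\;\mathbb E_{\gamma}\Big[\sup_{f,g\in\mathcal F}\big|\textstyle\sum_i\gamma_i a_i^{f,g}\big|\Big].
\]
Now $G_{f,g}:=\sum_i\gamma_i a_i^{f,g}$ is a centered Gaussian process on $\mathcal F\times\mathcal F$ with $G_{g,f}=-G_{f,g}$; hence, because both $(f,g)$ and $(g,f)$ lie in the index set, $\sup_{f,g}|G_{f,g}|=\sup_{f,g}G_{f,g}$, which disposes of the modulus.

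For the comparison, introduce two independent arrays $\{\eta_{i,k}\}$ and $\{\eta_{i,k}'\}$ of i.i.d.\ $N(0,1)$ variables and set $H_{f,g}:=\sqrt{2r}\,\big(\sum_{i,k}\eta_{i,k}f_k(x_i)+\sum_{i,k}\eta_{i,k}'g_k(x_i)\big)$. Using the triangle inequality, the hypothesis $|\rho_i(z)-\rho_i(\tilde z)|\le\|z-\tilde z\|_1$, and $\|a\|_1^2\le r\|a\|_2^2$, one checks that for all $(f,g),(f',g')$,
\[
\mathbb E\big[(G_{f,g}-G_{f',g'})^2\big]=\sum_i\big(a_i^{f,g}-a_i^{f',g'}\big)^2\le 2r\sum_{i,k}\big[(f_k(x_i)-f_k'(x_i))^2+(g_k(x_i)-g_k'(x_i))^2\big]=\mathbb E\big[(H_{f,g}-H_{f',g'})^2\big].
\]
Sudakov--Fernique (after the standard reduction to finite subclasses of $\mathcal F$, legitimate under the usual measurability convention on $\mathcal F$) then yields $\mathbb E[\sup_{f,g}G_{f,g}]\le\mathbb E[\sup_{f,g}H_{f,g}]$. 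Finally $H_{f,g}$ is additively separated in $f$ and $g$ with independent randomness, so $\sup_{f,g}H_{f,g}=\sqrt{2r}\,\big(\sup_f\sum_{i,k}\eta_{i,k}f_k(x_i)+\sup_g\sum_{i,k}\eta_{i,k}'g_k(x_i)\big)$ and, the two arrays being equidistributed, $\mathbb E[\sup_{f,g}H_{f,g}]=2\sqrt{2r}\,\mathbb E[\sup_f\sum_{i,k}\eta_{i,k}f_k(x_i)]$. Chaining the three displays gives the stated bound with constant $2\sqrt{\pi r}$ conditionally on $x$; taking $\mathbb E$ over $\{X_i\}$ finishes the proof.

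I expect the only genuinely delicate point to be the Sudakov--Fernique step: one must choose the dominating process $H$ with \emph{independent} Gaussian copies $\eta,\eta'$ so that the $f$- and $g$-parts decouple in the supremum while still dominating the increment variance of $G$, and one must handle the reduction to a finite index set together with the measurability of $\sup_{f\in\mathcal F}(\cdot)$. The remaining ingredients---the Rademacher-to-Gaussian passage through $\mathbb E|\gamma|=\sqrt{2/\pi}$, the antisymmetry removing the modulus, and the elementary variance estimate from the $L_1$-Lipschitz property---are routine.
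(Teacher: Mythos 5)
Your proof is correct, and it takes a genuinely different route from the paper's. The paper conditions on $X^{(n)}$, uses Hoeffding's inequality to show that the symmetrized Rademacher process has sub-Gaussian increments with respect to the $\sqrt{r}$-scaled $\ell_2$ metric $e(f,g)=(\sum_{i,k}(f_k(X_i)-g_k(X_i))^2)^{1/2}$, observes that $e$ is the canonical metric of the target Gaussian process $\sum_{i,k}\eta_{i,k}f_k(X_i)$, and then invokes Talagrand's comparison theorem (Theorem 2.10.11 of the 2021 book), which rests on the majorizing measure theorem, to dominate the expected supremum. You instead pass from Rademacher to Gaussian multipliers via $\mathbb E|\gamma|=\sqrt{2/\pi}$ and Jensen, kill the modulus by the antisymmetry $G_{g,f}=-G_{f,g}$ of the symmetric index set, and then apply Sudakov--Fernique against an explicitly constructed decoupled majorant $H_{f,g}=\sqrt{2r}(\sum_{i,k}\eta_{i,k}f_k(x_i)+\sum_{i,k}\eta'_{i,k}g_k(x_i))$; the increment-variance check via the triangle inequality, $(a+b)^2\le 2a^2+2b^2$ and $\|\cdot\|_1^2\le r\|\cdot\|_2^2$ is exactly right, and the independence of the two arrays is indeed what makes the cross term vanish while the supremum still splits deterministically. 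Both arguments lose the same $\sqrt r$ factor (harmless, since $r=3$ in the application); what yours buys is a substantially more elementary toolkit --- Sudakov--Fernique rather than generic chaining --- together with an explicit constant $2\sqrt{\pi r}$, at the cost of the extra Gaussianization step and the (correctly flagged) routine reduction to finite subclasses to legitimize the comparison on a possibly uncountable index set.
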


\begin{proof} [Proof of Lemma \ref{multivariate contraction}]
  We follow the proof of Theorem 4.1 of \cite{van2013generic} and that of Theorem 16.2 of \cite{van2016estimation}.
  Observe that 
  \begin{equation} 
    \mathbb E\left[ \sup_{f, g \in \mathcal F} \left| \sum_{i = 1}^n \xi_i (\rho_i (f (X_i)) - \rho_i (g(X_i))) \right| \right] 
    = \mathbb E \left[ \mathbb E \left[ \sup_{f, g \in \mathcal F} \left| \sum_{i = 1}^n \xi_i (\rho_i (f (X_i)) - \rho_i (g(X_i))) \right| \middle| X^{(n)} \right] \right],
  \end{equation}
  where $X^{(n)} := (X_1, \dots, X_n)'$.
  We investigate the tail behavior of a centered, symmetric stochastic process $(\sum_{i = 1}^n \xi_i \rho_i (f(X_i)))_{f \in \mathcal F}$ with $X^{(n)}$ fixed.
  Note that, for any $f, g \in \mathcal F$, 
  \begin{equation}\label{multivariate contraction cs} 
    \sum_{i = 1}^n (\rho_i (f(X_i)) - \rho_i (g(X_i)))^2 \leq \sum_{i = 1}^n \| f(X_i) - g(X_i) \|_1^2
    \leq r \sum_{i = 1}^n \sum_{k = 1}^r (f_k (X_i) - g_k (X_i))^2,
  \end{equation}
  where $g = (g_1, \dots, g_r)'$ and the last inequality follows from the Cauchy-Schwarz inequality for $\| f(X_i) - g(X_i) \|_1$.
  For $u > 0$ and $(\rho_1 (f(X_i)), \dots, \rho_n (f(X_n)))' \neq (\rho_1 (g(X_1)), \dots, \rho_n (g(X_n)))'$, 
  Lemma 2.2.7 of \cite{van1996weak} yields that
  \begin{equation} \label{multivariate contraction hoeffding}
    \mathbb P \left( \left| \sum_{i = 1}^n \xi_i (\rho_i (f(X_i)) - \rho_i (g(X_i))) \right| \geq u \middle| X^{(n)} \right)
    \leq 2 \exp \left\{ - \frac{u^2}{2} \left( \sum_{i = 1}^n (\rho_i (f(X_i)) - \rho_i (g(X_i)))^2 \right)^{-1} \right\}.
  \end{equation}
  It now follows from \eqref{multivariate contraction cs} and \eqref{multivariate contraction hoeffding} that 
  \begin{equation} \label{multivariate contraction tail bound}
    \mathbb P \left( \left| r^{-1/2} \sum_{i = 1}^n \xi_i (\rho_i (f(X_i)) - \rho_i (g(X_i))) \right| \geq u \middle| X^{(n)} \right)
    \leq 2 \exp \left\{ - \frac{u^2}{2} \left( \sum_{i = 1}^n \sum_{k = 1}^r (f_k (X_i) - g_k (X_i))^2 \right)^{-1} \right\}.
  \end{equation}

  Let $e (f, g) := \left( \sum_{i = 1}^n \sum_{k = 1}^r (f_k (X_i) - g_k (X_i))^2 \right)^{1/2}$. This $e$ is the canonical semi-metric as in (2.113) of \cite{talagrand2021upper} for 
  a centered Gaussian process $( \sum_{k = 1}^r \sum_{i = 1}^n \omega_{i, k} f_k (X_i) )_{f \in \mathcal F}$ with $X^{(n)}$ fixed because,
  for any $f, g \in \mathcal F$, 
  \begin{equation}
    \mathbb E \left[ \left( \sum_{k = 1}^r \sum_{i = 1}^n \omega_{i, k} (f_k(X_i) - g_k (X_i)) \right)^2 \middle| X^{(n)} \right] = \sum_{k = 1}^r \sum_{i = 1}^n (f_k (X_i) - g_k (X_i))^2, \notag
  \end{equation}
  by the assumption on $\{ \omega_{i, k} : 1 \leq i \leq n, 1 \leq k \leq r \}$.
  In view of \eqref{multivariate contraction tail bound}, Theorem 2.10.11 of \cite{talagrand2021upper} gives that 
  \begin{equation} 
    \mathbb E \left[ \sup_{f, g \in \mathcal F} \left| \sum_{i = 1}^n \xi_i (\rho_i (f(X_i)) - \rho_i (g(X_i))) \right| \middle| X^{(n)} \right]
    \lesssim r^{1/2} \mathbb E \left[ \sup_{f \in \mathcal F} \sum_{k = 1}^r \sum_{i = 1}^n \omega_{i, k} f_k (X_i) \middle| X^{(n)} \right]. \notag
  \end{equation}
  Taking the expectation with respect to $X^{(n)}$ completes the proof.
\end{proof}

The following lemma plays an important role in the proof of Proposition \ref{consistency}.
\begin{lemma} \label{maximal}
  Assume the assumption of Proposition \ref{consistency} holds. Then it holds that \\
  $(a) \ \mathbb E \left[ \left\| \mathbb P_n \omega Z'\gamma \right\|_{\Gamma_M} \right] \lesssim \sqrt{n \log d} / p$, \\
  $(b) \ \mathbb E\left[ \left\| \mathbb P_n \omega \frac{(Y - X'\alpha - D(\beta + \lambda))^2}{2 \sigma^2} \right\|_{\tilde \Theta} \right] \lesssim n^{-1/2}$, \\
  $(c) \ \mathbb E \left[ \left\| \mathbb P_n \omega \frac{(Y - X'\alpha - D\beta)^2}{2 \sigma^2} \right\|_{\tilde \Theta} \right] \lesssim n^{-1/2}$.
\end{lemma}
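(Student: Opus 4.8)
The plan is to treat the three parts separately, the substance being in part $(a)$, where the empirical process is indexed by the possibly high-dimensional $\gamma$; parts $(b)$ and $(c)$ are classical maximal inequalities over the compact set $\tilde\Theta$. For $(a)$ I would first exploit the constraint $\|\gamma\|_1\le Mn/p$ defining $\Gamma_M$: by H\"older's inequality, $\sup_{\gamma\in\Gamma_M}|\mathbb P_n\eta Z'\gamma|\le(M/p)\max_{1\le j\le d}|\sum_{i=1}^n\eta_i Z_{i,(j)}|$, which reduces the claim to $\mathbb E[\max_j|\sum_i\eta_i Z_{i,(j)}|]\lesssim\sqrt{n\log d}$. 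I would obtain this by conditioning on $\{Z_i\}_{i=1}^n$: given the $Z_i$'s, $\sum_i\eta_i Z_{i,(j)}$ is centered Gaussian with standard deviation $s_j:=(\sum_i Z_{i,(j)}^2)^{1/2}$, so the sub-Gaussian maximal inequality (Lemma 2.2.2 of \cite{van1996weak}, applied conditionally) bounds the conditional expectation of the maximum by a constant times $\sqrt{\log d}\max_j s_j$; taking expectations and using Jensen's inequality leaves only $\mathbb E[\max_j s_j^2]\lesssim n$ to be shown. Writing $\tau_j^2:=\mathbb E[Z_{(j)}^2]$, which is bounded uniformly in $j$ since $Z_{(j)}$ is uniformly sub-Gaussian (Assumption \ref{covariate}$(c)$), and noting that $Z_{(j)}^2-\tau_j^2$ is uniformly sub-exponential, Bernstein's inequality for sums of independent sub-exponential variables together with a union bound over the $d$ coordinates gives $\mathbb P(\max_j|s_j^2-n\tau_j^2|>t)\le 2d\exp(-\mathcal C\min(t^2/n,t))$ for all $t>0$; integrating this tail and invoking $\log d=o(n)$ (a consequence of Assumption \ref{tuning parameter}$(b)$) yields $\mathbb E[\max_j|s_j^2-n\tau_j^2|]\lesssim\sqrt{n\log d}+\log d=o(n)$, hence $\mathbb E[\max_j s_j^2]\le n\max_j\tau_j^2+o(n)\lesssim n$. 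Assembling the pieces gives $\mathbb E[\|\mathbb P_n\eta Z'\gamma\|_{\Gamma_M}]\lesssim\sqrt{n\log d}/p$.

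For parts $(b)$ and $(c)$ I would show that the class $\mathcal G:=\{w\mapsto(2\sigma^2)^{-1}(y-x'\alpha-d(\beta+\lambda))^2:\theta\in\tilde\Theta\}$ is a Euclidean (VC-type) class with a square-integrable envelope. Indeed, compactness of $\tilde\Theta$, the lower bound $\sigma^2\ge l_{\sigma^2_0}>0$, boundedness of $D$, and $\mathbb E[\|U\|^{10}]<\infty$ (Assumption \ref{covariate}$(a)$) together imply that $\theta\mapsto(2\sigma^2)^{-1}(y-x'\alpha-d(\beta+\lambda))^2$ is Lipschitz in $\theta$ on $\tilde\Theta$ with an $L_2(P)$ Lipschitz constant and that $\mathcal G$ has an $L_2(P)$ envelope $G$; hence $\mathcal G$ is Euclidean, e.g.\ by Theorem 2.7.11 of \cite{van1996weak}. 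A Gaussian-multiplier maximal inequality for such classes---obtained by the usual chaining argument conditionally on the data, exactly as in the proof of Theorem 2.14.1 of \cite{van1996weak}---then gives $\mathbb E[\sup_{\theta\in\tilde\Theta}|n^{-1/2}\sum_i\eta_i g(W_i;\theta)|]\lesssim\|G\|_{P,2}<\infty$, and dividing by $n^{1/2}$ yields the asserted $n^{-1/2}$ rate. Part $(c)$ is the identical argument with $\lambda$ held at $0$.

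The step I expect to be the main obstacle is the bound $\mathbb E[\max_j s_j^2]\lesssim n$ in part $(a)$. The crude estimate $\max_j\sum_i Z_{i,(j)}^2\le\sum_i\max_j Z_{i,(j)}^2$ only yields $O(n\log d)$, so that $\mathbb E[\max_j|\sum_i\eta_i Z_{i,(j)}|]=O(\sqrt n\,\log d)$, which would introduce an extra $\sqrt{\log d}$ into $a_n$ in the proof of Proposition \ref{consistency} and break the rate condition in Assumption \ref{tuning parameter}$(a)$. Because $d$ may be exponentially large in $n$, a variance or $L_2$ argument on $\max_j\mathbb P_n Z_{(j)}^2$ is far too weak, and it is precisely the exponential (Bernstein-type) concentration of each $\mathbb P_n Z_{(j)}^2$, together with the union bound and $\log d=o(n)$, that delivers the sharp factor $\sqrt{n\log d}/p$.
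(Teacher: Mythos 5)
Your proposal is correct, and parts $(b)$ and $(c)$ follow essentially the paper's own route (Lipschitz-in-$\theta$ via the mean value theorem, a VC-type covering-number bound with a square-integrable envelope, and a uniform-entropy multiplier maximal inequality yielding the $n^{-1/2}$ rate). Part $(a)$, however, is argued genuinely differently. The paper applies Lemma 8 of \cite{chernozhukov2015comparison} directly to the array $\eta_i Z_{(j),i}$, which produces a leading term $\sqrt{\log d / n}$ plus a remainder $n^{-1}\sqrt{\mathbb E[\max_{i,j}(\eta_i Z_{(j),i})^2]}\,\log d$ that is controlled by Orlicz-norm bounds ($\lesssim \sqrt{\log n}\sqrt{\log(nd)}$) and then discarded using Assumption \ref{tuning parameter}$(b)$. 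You instead condition on $Z^{(n)}$, exploit that $\sum_i \eta_i Z_{(j),i}$ is exactly Gaussian with conditional standard deviation $s_j$, apply the sub-Gaussian maximal inequality to get $\sqrt{\log d}\,\mathbb E[\max_j s_j]$, and then control $\mathbb E[\max_j s_j^2]\lesssim n$ by Bernstein concentration of the sub-exponential variables $Z_{(j)}^2-\tau_j^2$ plus a union bound. Both deliver the same $\sqrt{n\log d}/p$ after multiplying by $\sup_{\Gamma_M}\|\gamma\|_1\lesssim n/p$. Your route is more self-contained (it does not lean on the CCK moment inequality) and, as a side benefit, the condition you actually need to absorb the Bernstein remainder is only $\log d \lesssim n$, which is weaker than what the paper's remainder term requires; the paper's route is shorter because the CCK lemma packages the truncation/concentration step for you. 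Your closing diagnosis is also on point: the crude bound $\max_j s_j^2 \le \sum_i \max_j Z_{(j),i}^2$ would give $\mathbb E[\max_j|\sum_i\eta_i Z_{(j),i}|]=O(\sqrt n\,\log d)$, i.e.\ an extra $\sqrt{\log d}$ in $a_n$ that Assumption \ref{tuning parameter}$(a)$ does not cover, so the exponential concentration of $\mathbb P_n Z_{(j)}^2$ (or, in the paper's version, the Orlicz control of the maximum entry) is genuinely needed.
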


\begin{proof} [Proof of Lemma \ref{maximal}]
  \underline{$(a)$}.
  Observe that 
  \begin{equation} \label{lemma maximal a prod bound}
    \mathbb E \left[ \left\| \mathbb P_n \omega Z'\gamma \right\|_{\Gamma_M} \right] = \mathbb E \left[ \left\| \left( \frac{1}{n} \sum_{i = 1}^n \omega_i Z_i \right)' \gamma  \right\|_{\Gamma_M} \right]
    \leq \mathbb E \left[ \max_{1 \leq j \leq d} \left| \mathbb P_n \omega Z_{(j)} \right| \right] \sup_{\gamma \in \Gamma_M} \| \gamma \|_1.
  \end{equation}
  By Lemma 8 of \cite{chernozhukov2015comparison},
  \begin{equation} \label{lemma maximal cck bound}
    \mathbb E \left[ \max_{1 \leq j \leq d} \left| \mathbb P_n \omega Z_{(j)} \right| \right]
    \lesssim \frac{1}{\sqrt{n}} \left( \sqrt{\max_{1 \leq j \leq d} \frac{1}{n} \sum_{i = 1}^n \mathbb E \left[Z_{(j), i}^2 \right] } \sqrt{\log d} + 
    \frac{1}{\sqrt{n}} \sqrt{\mathbb E \left[\max_{1 \leq i \leq n, 1 \leq j \leq d} \left( \omega_i Z_{(j), i}\right)^2 \right]} \log d \right).
  \end{equation}
  For the second term on the right side, the independence of $\{ \omega_{i} \}_{i = 1}^n$ and $\{ Z_{(j), i} \}_{1 \leq i \leq n, 1 \leq j \leq d}$ implies that 
  $\mathbb E \left[ \max_{1 \leq i \leq n, 1 \leq j \leq d} (\omega_i Z_{(j), i})^2 \right] \leq \mathbb E \left[ \max_{1 \leq i \leq n} \omega_i^2 \right] \mathbb E \left[ \max_{1 \leq i \leq n, 1 \leq j \leq d} Z_{(j), i}^2 \right]$.
  The right side is further bounded by $\left\| \max_{1 \leq i \leq n} | \omega_i | \right\|_{\psi_2}^2 \| \max_{1 \leq i \leq n, 1 \leq j \leq d} |Z_{(j), i}| \|_{\psi_2}^2$ by page 95 of \cite{van1996weak}.
  Here, $\| \cdot \|_{\psi_2}$ is the Orlicz norm for a function $\psi_2 (x) = e^{x^2} - 1$ as defined on page 95 of \cite{van1996weak}.
  Then, by Lemma 2.2.1 and 2.2.2 of \cite{van1996weak} in conjunction with Assumption \ref{covariate}$(c)$,
  $ \left\| \max_{1 \leq i \leq n} | \omega_i | \right\|_{\psi_2}^2 \| \max_{1 \leq i \leq n, 1 \leq j \leq d} |Z_{(j), i}| \|_{\psi_2}^2 \lesssim \log (n + 1) \log(nd + 1)$.
  Consequently, we obtain 
  \begin{equation} \label{lemma maximal square bound}
    \sqrt{\mathbb E \left[\max_{1 \leq i \leq n, 1 \leq j \leq d} \left( \omega_i Z_{(j), i}\right)^2 \right]} \lesssim \sqrt{\log n} \sqrt{\log(nd)}.
  \end{equation}
  Because $\mathbb E\left[Z_{(j), i}^2 \right]$ is bounded uniformly in $i$ and $j$ from Lemma 2.2.1 of \cite{van1996weak} and Assumption \ref{covariate}$(c)$, it follows from
  \eqref{lemma maximal cck bound} and \eqref{lemma maximal square bound} that
  \begin{equation} \label{lemma maximal log bound}
    \mathbb E \left[ \max_{1 \leq j \leq d} \left| \mathbb P_n \omega Z_{(j)} \right| \right] \lesssim \sqrt{\frac{\log d}{n}} + \frac{\sqrt{\log n} \sqrt{\log (n \lor d)} \log d}{n} \lesssim \sqrt{\frac{\log d}{n}},
  \end{equation}
  where the last inequality follows from Assumption \ref{tuning parameter}$(b)$.
  We complete the proof by \eqref{lemma maximal a prod bound} and \eqref{lemma maximal log bound} in conjunction with $\sup_{\gamma \in \Gamma_M} \| \gamma \|_1 \lesssim n/p$ from Assumption \ref{parameter compact}$(d)$.

  \underline{$(b)$}.
  Observe that, by the assumption on $\omega$,
  \begin{equation} 
    \mathbb E\left[ \left\| \mathbb P_n \omega \frac{(Y - X'\alpha - D(\beta + \lambda))^2}{2 \sigma^2} \right\|_{\tilde \Theta} \right] = \mathbb E\left[ \left\| (\mathbb P_n - P) \omega \frac{(Y - X'\alpha - D(\beta + \lambda))^2}{2 \sigma^2} \right\|_{\tilde \Theta} \right]. \label{lemma maximal b simple bound}
  \end{equation}
  Let $\theta_1 = (\alpha_1', \beta_1, \lambda_1, \sigma^2_1)'$ and $\theta_2 = (\alpha_2', \beta_2, \lambda_2, \sigma^2_2)' \in \tilde \Theta$ be arbitrary.
  By the mean value theorem and the Cauchy-Schwarz inequality, 
  \begin{align} 
    &\left| \omega \frac{(Y - X'\alpha_1 - D (\beta_1 + \lambda_1))}{2 \sigma^2_1} - \omega \frac{(Y - X'\alpha_2 - D (\beta_2 + \lambda_2))}{2 \sigma^2_2}  \right| \notag \\
    \leq & \left\| \nabla_{\theta} \omega \frac{(Y - X' \bar \alpha - D (\bar \beta + \bar \lambda))}{2 \bar \sigma^2} \right\| \| \theta_1 - \theta_2 \|, \label{lemma maximal b cs}
  \end{align}
  where $\bar \theta$ lies on the path connecting $\theta_1$ and $\theta_2$.
  By a straightforward derivative calculation in conjunction with Assumption \ref{covariate}$(d)$ and \ref{parameter compact}, $\left\| \nabla_{\theta} \omega \frac{(Y - X' \bar \alpha - D (\bar \beta + \bar \lambda))}{2 \bar \sigma^2 } \right\|$ is bounded by $F := \mathcal C |\omega| (|Y| + \| X \| + 1)^2$.
  Taking $\mathcal C$ sufficiently large, this $F$ can be an envelope function (see page 84 of \cite{van1996weak} for the definition) for the functional class $\mathcal F := \left\{ \omega \frac{(Y - X'\alpha - D (\beta + \lambda))}{2 \sigma^2} : \theta \in \tilde \Theta \right\}$ by Assumption \ref{parameter compact}.
  Then, in view of \eqref{lemma maximal b cs}, Lemma 26 of \cite{katoep} yields $\sup_{Q} N (\ve \| F \|_{Q, 2}, \mathcal F, L_2 (Q) ) \leq (A / \ve)^{\nu}$ for all $0 < \ve < 1$
  with some $A \geq e$ and $\nu \geq 1$, where $N(\cdot, \cdot, \cdot)$ is a covering number (see page 84 of \cite{van1996weak} for the definition) and
  the supremum is taken over all discrete probability measures.
  Because $\mathbb E[F^2]$ is finite from Assumption \ref{covariate}$(a)$, it follows from Corollary 5.1 of \cite{chernozhukov2014gaussian} that
  \begin{equation} 
    \mathbb E\left[ \left\| (\mathbb P_n - P) \omega \frac{(Y - X'\alpha - D(\beta + \lambda))^2}{2 \sigma^2} \right\|_{\tilde \Theta} \right] \lesssim \frac{1}{\sqrt{n}} \left(1 + \frac{  \sqrt{ \mathbb E \left[\max_{1 \leq i \leq n} F_i^2\right] }}{\sqrt{n}} \right) \lesssim n^{-1/2}, \label{lemma maximal b cck bound}
  \end{equation}
  where the second inequality follows from the fact that $\sqrt{ \mathbb E \left[ \max_{1 \leq i \leq n} F_i^2\right]} \leq \sqrt{n} \sqrt{\mathbb E[F^2]}$.
  \eqref{lemma maximal b simple bound} and \eqref{lemma maximal b cck bound} now complete the proof.

  \underline{$(c)$}.
  The proof is similar to that of $(b)$ and thus omitted.
\end{proof}

Lemma \ref{hermite} provides a simplified form of derivatives of the density function for the normal distribution and is cited multiple times in the proof of Proposition \ref{quad approx}.
This result is essentially due to Proposition A of \cite{kasahara2015testing}.
\begin{lemma} \label{hermite}
  Let $\eta = (\eta_1, \dots, \eta_{q + 1})'$ and $U = (U_{(1)}, \dots, U_{(q + 1)})'$. Then the following equalities hold for any nonnegative integer $k_1, \dots, k_q, k_{\lambda}$ and $l$:
  \begin{align} 
    &\nabla_{\eta_1}^{k_1} \dots \nabla_{\eta_q}^{k_q} \nabla_{\lambda}^{k_{\lambda}} \nabla_{\sigma^2}^l \phi_{\sigma} (Y - U'\eta -  D\lambda/2) \notag \\
    = &\left(\prod_{j = 1}^{q} U_{(j)}^{k_j}\right) \left( \frac{D}{2} \right)^{k_{\lambda}} \left( \frac{1}{2} \right)^{l} \left( \frac{1}{\sigma} \right)^{k + 2 l} H^{k + 2 l} \left( \frac{Y - U'\eta - D \lambda / 2}{\sigma} \right) \phi_{\sigma} (Y - U'\eta - D \lambda /2) \notag \\
    &\nabla_{\eta_1}^{k_1} \dots \nabla_{\eta_q}^{k_q} \nabla_{\lambda}^{k_{\lambda}} \nabla_{\sigma^2}^l \phi_{\sigma} (Y - U'\eta +  D\lambda/2) \notag \\
    = &\left(\prod_{j = 1}^{q} U_{(j)}^{k_j}\right) \left( \frac{-D}{2} \right)^{k_{\lambda}} \left( \frac{1}{2} \right)^{l} \left( \frac{1}{\sigma} \right)^{k + 2 l} H^{k + 2 l} \left( \frac{Y - U'\eta + D \lambda / 2}{\sigma} \right) \phi_{\sigma} (Y - U'\eta + D \lambda /2), \notag
  \end{align}
  where $k := k_1 + \dots + k_q + k_{\lambda}$.
\end{lemma}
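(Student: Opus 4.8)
The plan is to regard $\phi_\sigma(Y - U'\eta \mp D\lambda/2)$ as the composition of the two-variable map $(m,\sigma^2)\mapsto\phi_\sigma(m)$ with the affine mean $m := Y - U'\eta \mp D\lambda/2$, and to reduce every derivative appearing in the statement to a pure $m$-derivative of a Gaussian. Two classical facts drive the computation. The first is the Hermite identity for the standard normal density $\phi$, namely $\phi^{(a)}(z) = (-1)^a H^a(z)\phi(z)$ for every nonnegative integer $a$; combined with the scaling relation $\phi_\sigma(m) = \sigma^{-1}\phi(m/\sigma)$ this yields, by the chain rule, $\partial_m^a \phi_\sigma(m) = (-1)^a \sigma^{-a} H^a(m/\sigma)\phi_\sigma(m)$. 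The second is the heat-equation identity $\partial_{\sigma^2}\phi_\sigma(m) = \tfrac12 \partial_m^2 \phi_\sigma(m)$, which I would verify by directly differentiating $(2\pi\sigma^2)^{-1/2}\exp(-m^2/2\sigma^2)$ in $t:=\sigma^2$ and in $m$ and comparing the two expressions.

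Next I would split the differential operator into its location part (the $\eta_j$ and $\lambda$ derivatives) and its scale part (the $\sigma^2$ derivatives). Since $m$ is affine in $\eta$ and $\lambda$, the chain rule turns each $\nabla_{\eta_j}$ into the constant $\partial_{\eta_j}m=-U_{(j)}$ times $\partial_m$, and each $\nabla_\lambda$ into $\partial_\lambda m=\mp D/2$ times $\partial_m$; hence the location derivatives contribute the constant $\prod_{j}(-U_{(j)})^{k_j}(\mp D/2)^{k_\lambda}$ times $\partial_m^{k}$, where $k=k_1+\dots+k_q+k_\lambda$. For the scale part I would iterate the heat-equation identity, using the smoothness of $\phi_\sigma$ on $\{\sigma^2>0\}$ to commute $\partial_{\sigma^2}$ with $\partial_m$, to obtain $\partial_{\sigma^2}^l\phi_\sigma = 2^{-l}\partial_m^{2l}\phi_\sigma$. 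Invoking equality of mixed partials once more, the full operator then collapses to $\prod_{j}(-U_{(j)})^{k_j}(\mp D/2)^{k_\lambda}\,2^{-l}\,\partial_m^{k+2l}\phi_\sigma(m)$.

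Finally I would apply the Hermite identity with $a=k+2l$ and collect constants, obtaining the coefficient
\[
\Big(\prod_{j=1}^q (-U_{(j)})^{k_j}\Big)\Big(\mp\tfrac{D}{2}\Big)^{k_\lambda}\Big(\tfrac12\Big)^l \frac{(-1)^{k+2l}}{\sigma^{k+2l}} H^{k+2l}\!\Big(\tfrac{m}{\sigma}\Big)\phi_\sigma(m),
\]
after which only the sign accounting remains. Writing $\prod_j(-U_{(j)})^{k_j}=(-1)^{\sum_j k_j}\prod_j U_{(j)}^{k_j}$ and $(-1)^{k+2l}=(-1)^k$, the accumulated sign is $(-1)^{\sum_j k_j}(\mp 1)^{k_\lambda}(-1)^k$. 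For the first identity ($m=Y-U'\eta-D\lambda/2$) this equals $(-1)^{\sum_j k_j + k_\lambda + k}=(-1)^{2k}=1$, leaving the factor $(D/2)^{k_\lambda}$; for the second identity the sign of $\partial_\lambda m$ flips, so the $(\mp1)^{k_\lambda}$ factor disappears and the residual sign is $(-1)^{\sum_j k_j+k}=(-1)^{k_\lambda}$, producing $(-D/2)^{k_\lambda}$, exactly as stated. The argument is elementary throughout; the only points demanding care are the justification of the heat-equation reduction and the commutation of the mixed partials, together with the sign bookkeeping, which is where the initially surprising cancellation of all the factors $(-1)^{k_j}$ coming from $-U_{(j)}$ occurs.
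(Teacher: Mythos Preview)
Your argument is correct: the heat-equation reduction $\partial_{\sigma^2}\phi_\sigma=\tfrac12\partial_m^2\phi_\sigma$, the chain-rule conversion of the location derivatives, and the sign bookkeeping all go through, and the final expressions match the lemma. The paper itself does not spell out a proof but refers to Proposition~A of \cite{kasahara2015testing}, whose argument is precisely this Hermite-polynomial calculus; you have essentially supplied the details of that cited computation.
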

\begin{proof} [Proof of Lemma \ref{hermite}]
  The statement follows from a minor modification of the proof of Proposition A of \cite{kasahara2015testing}.
\end{proof}

This lemma is the key to showing that the effect of $\hat \gamma^*$ on quadratic approximation for the penalized log-likelihood function vanishes asymptotically in the proof of Proposition \ref{quad approx}.
\begin{lemma} \label{shrinkage pi}
  Assume the assumption of Proposition \ref{quad approx} holds. Let $\{ V_i \}_{i = 1}^n $ be $i.i.d.$ random variables with finite second moment and $\rho (\ve_i)$ be a polynomial of $\ve_i$.
  Suppose that $\{ V_i \}_{i = 1}^n$ and $\{ \ve_i \}_{i = 1}^n$ are independent.
  Then, for any $k \in \mathbb N$, it holds that \\
  (a) $ \mathbb P_n (2 \pi(Z'\hat \gamma^*) - 1) D H^1 = o_p (n^{-3/4})$, \\
  (b) $ \mathbb P_n V (\pi (Z'\hat \gamma^*) - 1/2)^k \rho (\ve) = o_p (n^{-1/4})$, \\
  (c) $ \mathbb P_n D^2 (\pi (Z'\hat \gamma^*) - 1/2)^2 \rho (\ve) = o_p(n^{-1/2})$.
\end{lemma}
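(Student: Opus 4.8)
The plan is to exploit that $\pi(0)=1/2$, so that both $\pi(\cdot)-1/2$ and $2\pi(\cdot)-1$ vanish at the origin, while Proposition \ref{consistency} forces $Z'\hat\gamma^*$ to be small. Since $\pi$ has derivative bounded by $1/4$, the Lipschitz estimate $|\pi(x)-\pi(y)|\le|x-y|/4$ together with $|Z'\hat\gamma^*|\le\|\hat\gamma^*\|_1\max_{1\le j\le d}|Z_{(j)}|$ gives, for every realization and every integer $m\ge1$, the pointwise bound $|\pi(Z'\hat\gamma^*)-1/2|^m\le\mathcal C\|\hat\gamma^*\|_1^m(\max_{1\le j\le d}|Z_{(j)}|)^m$. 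Crucially $\hat\gamma^*$ enters only through this deterministic inequality, so no empirical-process argument uniform in $\gamma$ is needed; it remains to control empirical averages of the covariates together with $\rho(\ve)$, which one does by Markov's inequality after computing expectations.

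Parts $(b)$ and $(c)$ follow directly from this crude bound. For $(b)$, write $|\mathbb P_n V(\pi(Z'\hat\gamma^*)-1/2)^k\rho(\ve)|\le\mathcal C\|\hat\gamma^*\|_1^k\,\mathbb P_n|V|(\max_j|Z_{(j)}|)^k|\rho(\ve)|$; since $\ve$ is independent of $(V,Z)$, $\mathbb E[|V|(\max_j|Z_{(j)}|)^k|\rho(\ve)|]\le\mathbb E[|\rho(\ve)|]\,(\mathbb E[V^2])^{1/2}(\mathbb E[(\max_j|Z_{(j)}|)^{2k}])^{1/2}\lesssim(\log d)^{k/2}$ by Assumption \ref{covariate}$(c)$ and the sub-Gaussian maximal inequality (Lemmas 2.2.1--2.2.2 of \cite{van1996weak}), so Markov gives $\mathbb P_n|V|(\max_j|Z_{(j)}|)^k|\rho(\ve)|=O_p((\log d)^{k/2})$; combining with $\|\hat\gamma^*\|_1=o_p(n^{-1/4}(\log d\log n)^{-1/2})$ yields $O_p(n^{-k/4}(\log n)^{-k/2})=o_p(n^{-1/4})$ for $k\ge1$. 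Part $(c)$ is the same computation with boundedness of $D$ absorbing $D^2$, giving $\mathbb P_n D^2(\max_j|Z_{(j)}|)^2|\rho(\ve)|=O_p(\log d)$ and hence $O_p(n^{-1/2}(\log n)^{-1})=o_p(n^{-1/2})$.

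Part $(a)$ needs a sharper argument: the crude bound only gives $O_p(\|\hat\gamma^*\|_1\sqrt{\log d})=o_p(n^{-1/4})$, short of the required $o_p(n^{-3/4})$ by a factor $n^{1/2}$. The plan is to use that $2\pi(\cdot)-1$ is an odd, smooth, bounded function with $(2\pi)'(0)=1/2$, so its quadratic Taylor coefficient at $0$ vanishes and $2\pi(x)-1=x/2+R(x)$ with $|R(x)|\le\mathcal C|x|^3$ for all $x$. Splitting accordingly,
\[
  \mathbb P_n(2\pi(Z'\hat\gamma^*)-1)DH^1=\tfrac{1}{2}\bigl(\mathbb P_n ZDH^1\bigr)'\hat\gamma^*+\mathbb P_n R(Z'\hat\gamma^*)DH^1 .
\]
For the linear term, $\mathbb E[Z_{(j)}DH^1]=\mathbb E[Z_{(j)}D]\,\mathbb E[\ve]/\sigma_0=0$, so the Nemirovski-type maximal inequality (Lemma 8 of \cite{chernozhukov2015comparison}) with Assumptions \ref{covariate}$(c)$--$(d)$ and \ref{tuning parameter}$(b)$ gives $\max_j|\mathbb P_n Z_{(j)}DH^1|=O_p(\sqrt{\log d/n})$, and Hölder's inequality together with Proposition \ref{consistency} makes the linear term $O_p(\sqrt{\log d/n}\,\|\hat\gamma^*\|_1)=o_p(n^{-3/4}(\log n)^{-1/2})$. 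For the remainder, $|R(Z'\hat\gamma^*)|\le\mathcal C\|\hat\gamma^*\|_1^3(\max_j|Z_{(j)}|)^3$ and, exactly as in $(b)$--$(c)$, $\mathbb P_n(\max_j|Z_{(j)}|)^3|D||H^1|=O_p((\log d)^{3/2})$, so this piece is $O_p(\|\hat\gamma^*\|_1^3(\log d)^{3/2})=o_p(n^{-3/4}(\log n)^{-3/2})$; adding the two proves $(a)$.

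The main obstacle is $(a)$: one must recognise that the brute-force modulus bound loses a full factor $n^{1/2}$ and recover it by two devices — replacing the leading part of $2\pi(\cdot)-1$ by a mean-zero empirical average controlled at the rate $\sqrt{\log d/n}$ via a maximal inequality, and using the oddness of the logistic-type function to upgrade the Taylor remainder from order two to order three in $Z'\hat\gamma^*$, the latter being precisely what renders $\|\hat\gamma^*\|_1^3(\log d)^{3/2}$ negligible against $n^{-3/4}$.
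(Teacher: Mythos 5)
Your proposal is correct. Parts $(b)$ and $(c)$ coincide with the paper's argument: the deterministic bound $|\pi(Z'\hat\gamma^*)-1/2|\le \|\hat\gamma^*\|_1\max_j|Z_{(j)}|/4$, independence of $\ve$ from the covariates, the sub-Gaussian maximal inequality giving $\mathbb E[(\max_j|Z_{(j)}|)^m]\lesssim(\log d)^{m/2}$, and Markov. For part $(a)$, however, you take a genuinely different route. The paper reduces to a uniform statement over the shrinking ball $\Gamma_n=\{\gamma:\|\gamma\|_1\le n^{-1/4}(\log d\log n)^{-1/2}r_n\}$, uses that $\mathbb E[(2\pi(Z'\gamma)-1)DH^1]=0$ for each fixed $\gamma$ to symmetrize, and then applies the Ledoux--Talagrand contraction principle to $t\mapsto(\pi(t)-1/2)D_i\ve_i/\max_i|\ve_i|$, reducing everything to the linear Rademacher process $\|\mathbb P_n\xi Z'\gamma\|_{\Gamma_n}\lesssim\sqrt{\log d/n}\,\sup_{\Gamma_n}\|\gamma\|_1$ at the cost of a $\sqrt{\log n}$ factor from $\mathbb E[\max_i|\ve_i|]$. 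You instead avoid any empirical-process argument uniform in $\gamma$: you Taylor-expand $2\pi(x)-1=x/2+R(x)$, exploit oddness to get the global cubic bound $|R(x)|\le\mathcal C|x|^3$, handle the linear piece by $\ell_\infty$--$\ell_1$ H\"older duality against the mean-zero vector $\mathbb P_n ZDH^1$ (whose $\ell_\infty$-norm is $O_p(\sqrt{\log d/n})$ by the Nemirovski-type inequality), and kill the remainder with the crude modulus bound. The source of the extra $n^{-1/2}$ is the same in both proofs --- centering of the relevant process in $\ve$ --- but your version localizes it to the finite-dimensional coefficient vector rather than to a contraction of the whole process, which is more elementary; the price is that it leans on the specific odd structure of the logistic link (the vanishing quadratic coefficient is genuinely needed, since a second-order remainder would only give $o_p(n^{-1/2}(\log n)^{-1})$, short of $o_p(n^{-3/4})$), whereas the paper's contraction argument would survive with any Lipschitz link.
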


\begin{proof}[Proof of Lemma \ref{shrinkage pi}]
  \underline{$(a)$}.
  By Proposition \ref{consistency}, there exists a sequence $r_n$ converging to zero such that $\mathbb P (n^{1/4} \sqrt{\log d \log n} \| \hat \gamma^* \|_1 \geq r_n) \rightarrow 0$.
  Hence, it suffices to show $ \|  \mathbb P_n (2\pi (Z' \gamma) - 1) D H^1 \|_{\Gamma_n} = o_p (n^{-3/4})$,
  where $\Gamma_n := \left\{ \gamma \in \Gamma: \| \gamma \|_1 \leq n^{-1/4} (\log d \log n)^{-1/2} r_n \right\}$.
  Because $\mathbb E[(2 \pi (Z'\gamma) - 1) D H^1] = 0$, the symmetrization inequality gives that 
  \begin{align} \label{lemma shrinkage pi a symmetrization}
    \mathbb E \left[  \left\| \mathbb P_n (2\pi (Z' \gamma) - 1) D H^1 \right\|_{\Gamma_n}  \right]
    &\lesssim \ \mathbb E \left[ \left\| \mathbb P_n \xi (2 \pi (Z'\gamma) - 1) D H^1 \right\|_{\Gamma_n} \right], \notag \\
    &\lesssim \ \mathbb E \left[ \max_{1 \leq i \leq n} |\ve_i| \mathbb E \left[ \sup_{\gamma \in \Gamma_n} \left| \frac{1}{n} \sum_{i = 1}^n \frac{\xi_i (\pi (Z'\gamma) - 1/2)D_i \ve_i}{\max_{1 \leq i \leq n} |\ve_i|} \right| | D^{(n)}, Z^{(n)}, \ve^{(n)} \right]   \right],
  \end{align}
  where $\xi$ is a Rademacher random variable independent of $(D, Z, \ve)$, and $D^{(n)} := (D_1, \dots, D_n)$, $Z^{(n)} := (Z_1, \dots, Z_n)$ and $\ve^{(n)} := (\ve_1, \dots, \ve_n)$.
  From Assumption \ref{covariate}$(d)$, we may assume $|D| \leq 1$ without the loss of generality.
  Then a function $\varphi_i (t) := \frac{(\pi(t) - 1/2)D_i \ve_i}{ \max_{1 \leq i \leq n} |\ve_i|}$ is contraction with $\varphi_i (0) = 0$.
  It follows from Theorem 4.12 of \cite{ledoux1991probability} that 
  \begin{equation} \label{lemma shrinkage pi a contraction}
    \mathbb E \left[ \sup_{\gamma \in \Gamma_n} \left| n^{-1} \sum_{i = 1}^n \frac{\xi_i (\pi (Z'\gamma) - 1/2)D_i \ve_i}{\max_{1 \leq i \leq n} |\ve_i|} \right| | D^{(n)}, Z^{(n)}, \ve^{(n)} \right] \leq 2 \mathbb E \left[ \left\| \mathbb P_n \xi Z'\gamma \right\|_{\Gamma_n} | D^{(n)}, Z^{(n)}, \ve^{(n)} \right].
  \end{equation}
  Combining \eqref{lemma shrinkage pi a symmetrization} and \eqref{lemma shrinkage pi a contraction}, we obtain
  \begin{equation} \label{lemma shrinkage pi a independence}
    \mathbb E \left[  \left\| \mathbb P_n (2\pi (Z' \gamma) - 1) D H^1 \right\|_{\Gamma_n}  \right] \lesssim \mathbb  E \left[ \max_{1 \leq i \leq n} |\ve_i| \| \mathbb P_n \xi Z'\gamma \|_{\Gamma_n} \right]
    = \mathbb E \left[ \max_{1 \leq i \leq n} |\ve_i|\right] \mathbb E\left[\| \mathbb P_n \xi Z'\gamma \|_{\Gamma_n}\right],
  \end{equation}
  where the equality follows from the independence of $\ve$ and $(\xi, Z)$.
  For the right side of the equality, $\mathbb E [\max_{1 \leq i \leq n} |\ve_i|] \lesssim \sqrt{\log n}$ follows from Lemma 2.2.1 and Lemma 2.2.2 of \cite{van1996weak} in conjunction with sub-Gaussianity of $\ve$.
  Additionally, $\mathbb E\left[\| \mathbb P_n \xi Z'\gamma \|_{\Gamma_n}\right] \lesssim \sqrt{\frac{\log d}{n}} n^{-1/4} (\log d \log n)^{-1/2} r_n$ from a similar argument to the proof of $(a)$ in Lemma \ref{maximal}.
  Those two inequality combined with \eqref{lemma shrinkage pi a independence} give that $\mathbb E \left[  \left\| \mathbb P_n (2\pi (Z' \gamma) - 1) D H^1 \right\|_{\Gamma_n}  \right] \lesssim r_n n^{-3/4} = o(n^{-3/4})$.
  We now complete the proof by applying Markov's inequality.

  \underline{$(b)$}.
  Similarly to $(a)$, it suffices to show that $\|\mathbb P_n V (\pi (Z'\gamma) - 1/2)^k \rho (\ve) \|_{\Gamma_n} = o_p (n^{-1/4})$.
  By the mean value theorem and $|\pi (z) - 1/2| \leq 1$ for any $z \in \mathbb R$, 
  \begin{align} \label{lemma shrinkage pi b mean value}
    \mathbb E \left[ \| \mathbb P_n V (\pi (Z'\gamma) - 1/2)^k \rho (\ve) \|_{\Gamma_n} \right] \lesssim \ &\mathbb E \left[  \left\| \mathbb P_n |V| |Z'\gamma| |\rho(\ve)|  \right\|_{\Gamma_n} \right] \notag \\
    \leq \ & \mathbb E \left[ |V| \sup_{\gamma \in \Gamma_n} |Z'\gamma| |\rho (\ve)| \right] \notag \\
    \leq \ & \mathbb E \left[ |V| \max_{1 \leq j \leq d} |Z_{(j)}| \right] \mathbb E[|\rho (\ve)|] \sup_{\gamma \in \Gamma_n} \| \gamma \|_1 ,
  \end{align}
  where the last inequality follows from the independence of $V$ and $\ve$. By the Cauchy-Schwarz inequality, $\mathbb E \left[ |V| \max_{1 \leq j \leq d} |Z_{(j)}| \right] \leq (\mathbb E [|V|^2])^{1/2} (\mathbb E[\max_{1 \leq j \leq d} |Z_{(j)}|^2])^{1/2}
  \lesssim \sqrt{\log d}$, where the last inequality follows from the assumption on the moment of $V$ and the argument leading to \eqref{lemma maximal square bound}.
  Combining this inequality with \eqref{lemma shrinkage pi b mean value}, the finiteness of the moment $\mathbb E[|\rho (\ve)|]$ and $\sup_{\gamma \in \Gamma_n} \| \gamma \|_1 \leq n^{-1/4} (\log d \log n)^{-1/2} r_n$ yields 
  $\mathbb E \left[ \| \mathbb P_n V (\pi (Z'\gamma) - 1/2)^k \rho (\ve) \|_{\Gamma_n} \right]   \lesssim r_n n^{-1/4} / \sqrt{\log n} = o(n^{-1/4}).$
  Applying Markov's inequality completes the proof.

  \underline{$(c)$}.
  Similarly to $(a)$, it suffices to show that $\| \mathbb P_n D^2 (\pi (Z'\gamma) - 1/2)^2 \rho (\ve) \|_{\Gamma_n} = o_p (n^{-1/2})$.
  By the mean value theorem in conjunction with Assumption \ref{covariate}$(d)$,
  \begin{equation} 
    \mathbb E\left[ \| \mathbb P_n D^2 (\pi (Z'\gamma) - 1/2)^2 \rho (\ve) \|_{\Gamma_n} \right] \lesssim \mathbb E \left[ \sup_{\gamma \in \Gamma_n} |Z'\gamma|^2 |\rho (\ve)| \right] 
    \lesssim  \mathbb E \left[ \max_{1 \leq j \leq d} Z_{(j)}^2 \right]  \sup_{\gamma \in \Gamma_n} \| \gamma \|_1^2, \notag
  \end{equation}
  where the second inequality follows from the independence of $\ve$ and $Z$, and the finiteness of the moment $\mathbb E[|\rho (\ve)|]$.
  For the right side, a similar argument to the proof of $(b)$ gives that $\mathbb E \left[ \max_{1 \leq j \leq d} Z_{(j)}^2 \right]  \lesssim \log d$.
  Additionally, we have $\sup_{\gamma \in \Gamma_n} \| \gamma \|_1^2 \leq n^{-1/2} (\log d \log n)^{-1} r_n^2$ from the choice of $\Gamma_n$.
  Therefore, we arrive at $\mathbb E\left[ \| \mathbb P_n D^2 (\pi (Z'\gamma) - 1/2)^2 \rho (\ve) \|_{\Gamma_n} \right] \lesssim r_n^2 n^{-1/2} / \log n = o(n^{-1/2})$. We complete the proof by applying Markov's inequality.
\end{proof}

\bibliographystyle{asa}
\bibliography{shrinkage_subgroup}

\end{document}